\newtheorem{theorem}{Theorem}[section]
\newtheorem{proposition}{Proposition}[section]
\newtheorem{lemma}{Lemma}[section]
\newtheorem{corollary}{Corollary}[section]
\newtheorem{assumption}{Assumption}[section]
\newtheorem{remark}{Remark}[section]
\numberwithin{equation}{section}
\newcommand{\E}{\mathbb{E}}
\begin{document}
\numberwithin{equation}{section}
\numberwithin{figure}{section}
\title{On the Saturation Phenomenon of Stochastic Gradient Descent for Linear Inverse Problems}

\author{Bangti Jin\thanks{Department of Computer Science, University College London, Gower Street, London WC1E 6BT, UK
(b.jin@ucl.ac.uk,bangti.jin@gmail.com). The work of BJ is supported by UK EPSRC grant EP/T000864/1.}
\and Zehui Zhou\thanks{Department of Mathematics, The Chinese University of Hong Kong, Shatin, New Territories, Hong Kong.
({zhzhou@math.cuhk.edu.hk, zou@math.cuhk.edu.hk}).
The work of JZ was substantially supported by Hong Kong RGC General Research Fund (projects 14306718 and
14304517).} \and Jun Zou\footnotemark[2].}

\date{}
\maketitle

\begin{abstract}
Stochastic gradient descent (SGD) is a promising method for solving large-scale inverse problems, due to its excellent scalability
with respect to data size. The current mathematical theory in the lens of regularization theory predicts that SGD
with a polynomially decaying stepsize schedule may suffer from an undesirable
saturation phenomenon, i.e., the convergence rate does not further improve
with the solution regularity index when it is beyond a certain range.
In this work, we present a refined convergence rate analysis of SGD, and prove that saturation actually does not occur if the initial
stepsize of the schedule is sufficiently small. Several numerical experiments are provided to complement the analysis.\\
\textbf{Key words}: stochastic gradient descent; regularizing property; convergence rate; saturation; inverse problems.
\end{abstract}

\section{Introduction}

In this paper, we consider the numerical solution of the following finite-dimensional linear inverse problem:
\begin{equation}\label{eqn:lininv}
Ax=y^\dag,
\end{equation}
where $A\in \mathbb{R}^{n\times m}$ is the system matrix representing the data formation mechanism,
and $x\in \mathbb{R}^m$ is the unknown signal of interest. In the context of inverse problems, the matrix
$A$ is commonly ill-conditioned. When the matrix $A$ is rank-deficient,
equation \eqref{eqn:lininv} may have infinitely many solutions. The reference solution $x^\dag$ is
taken to be the minimum norm solution relative to the initial guess $x_1$, i.e.,
\begin{align*}
x^\dag=\mathop{\arg\min}_{x\in\mathbb{R}^m}\{\|x-x_1\|\quad \mbox{s.t.}\quad Ax=y^\dag \},
\end{align*}
with $\|\cdot\|$ being the Euclidean norm of a vector (and also the spectral norm of a matrix).
In practice, we only have access to a noisy version $y^\delta$ of the exact data $y^\dag = Ax^\dag$, i.e.,
\begin{equation*}
y^\delta =y^\dag +\xi,
\end{equation*}
where $\xi \in \mathbb{R}^n$ denotes the noise in the data with a noise level
$ \delta :=\|\xi\|.$
We denote the $i${th} row of the matrix $A$ by a column vector $a_i\in \mathbb{R}^m$, i.e., $A=[a_i^t]_{i=1}^n$
(with the superscript $t$ denoting the matrix/vector transpose), and the $i${th} entry of the vector
$y^\delta\in\mathbb{R}^n$ by $y_i^\delta $.  Linear inverse problems of the form \eqref{eqn:lininv} arise in a
broad range of applications, e.g., initial condition / source identification and optical imaging.
A large number of numerical methods have been developed, prominently variational
regularization \cite{EnglHankeNeubauer:1996,ItoJin:2015} and iterative regularization \cite{KaltenbacherNeubauerScherzer:2008}.

Stochastic gradient descent (SGD) is one very promising numerical method for solving problem \eqref{eqn:lininv}. In its simplest form,
it reads as follows. Given an initial guess $x_1^\delta \equiv x_1 \in \mathbb{R}^m$, we update the iterate $x_{k+1}^\delta $ recursively by
\begin{equation}\label{eqn:SGD}
  x_{k+1}^\delta =x_k^\delta -\eta_k ((a_{i_k} ,x_k^\delta )-y_{i_k}^\delta )a_{i_k},\quad k=1,2,\cdots,
\end{equation}
where the random row index $i_k$ is drawn i.i.d. uniformly from the index set $\{1,\cdots,n\}$, $\eta_k>0$ is the stepsize
at the $k$th iteration, and $(\cdot,\cdot)$ denotes the Euclidean inner product. We denote by $\mathcal{F}_k$
the filtration generated by the random indices $\{i_1,\ldots,i_{k-1}\}$, define $\mathcal{F}$ by
$\mathcal{F}={\bigvee_{k \in \mathbb{N}} \mathcal {F}_k},$
and let $(\Omega,\mathcal{F},\mathbb{P})$ {be the associated probability space.} The notation $\E[\cdot]$ denotes taking expectation
with respect to the filtration $\mathcal{F}$. The SGD iterate $x_k^\delta$ is
random, and measurable with respect to $\mathcal{F}_k$.  SGD is a randomized version of the classical Landweber method \cite{Landweber:1951}
\begin{equation}\label{eqn:Landweber}
  x_{k+1}^\delta = x_k^\delta -\eta_k n^{-1}A^t(Ax_k^\delta -y^\delta ),\quad k=1,2,\cdots,
\end{equation}
which is identical with the gradient descent applied to the following objective functional
\begin{equation}\label{eqn:obj}
  J(x) = (2n)^{-1} \|Ax-y^\delta\|^2.
\end{equation}
When compared with the Landweber method in \eqref{eqn:Landweber}, SGD \eqref{eqn:SGD} employs only
one data pair $(a_{i_{k}},y_{i_k}^\delta)$ instead of all
data pairs, and thus it enjoys excellent scalability with respect to the data size. It is worth noting
that due to the ill-conditioning of $A$ and the presence of noise in the data $y^\delta$, the exact minimizer
of $J(x)$ is not of interest.

Since its first proposal by Robbins and Monro \cite{RobbinsMonro:1951} for statistical inference, SGD has
received a lot of attention in many diverse research areas (see the monograph \cite{KushnerYin:2003} for
various asymptotic results). Due to its excellent scalability, the interest in SGD and its variants has
grown explosively in recent years in machine learning, and its accelerated variants, e.g., ADAM, have been
established as the workhorse in many challenging deep learning training tasks \cite{Bottou:2010,BottouCurtisNocedal:2018}. It
has also achieved great success in inverse problems, e.g., in computed tomography (known as algebraic
reconstruction techniques or randomized Kaczmarz method \cite{HermanLentLutz:1978,Natterer:2001,StrohmerVershynin:2009,JiaoJinLu:2017}) and optical tomography \cite{ChenLiLiu:2018}.

The theoretical analysis of SGD for solving inverse problems is still in its infancy. Let $e_k^\delta=
x_k^\delta-x^\dag$ be its error with respect to the minimum-norm solution $x^\dag$. Only very recently,
the regularizing property was proved in \cite{JinLu:2019}: when equipped with \textit{a priori} stopping rules,
the mean squared error $\E[\|e_k^\delta\|^2]^\frac12$ of the SGD iterate $x_k^\delta$ converges to zero as
$\delta$ tends to zero, and further, under the canonical power type source condition (see \eqref{eqn:source}
in Assumption \ref{ass:stepsize} below), it converges to zero at a certain rate. However, the result
predicts that SGD can suffer from an undesirable saturation phenomenon for smooth solutions (i.e., with
$\nu>\frac12$): $\E[\|e_k^\delta\|^2]^\frac12$ converges at most at a rate $O(\delta^{\frac12})$, which is
slower than that achieved by Landweber method \cite[Chapter 6]{EnglHankeNeubauer:1996}; see also \cite{JahnJin:2020}
for \textit{a posteriori} stopping using the discrepancy principle and numerical illustration on the saturation
phenomenon for SGD. Thus, SGD is suboptimal
for ``smooth'' inverse solutions with $\nu>\frac{1}{2}$. This phenomenon is attributed to the inherent computational variance of the
SGD approximation $x_k^\delta$, which arises from the use of a random gradient estimate in place of the true
gradient. To the best of our knowledge, it remains unclear whether the saturation phenomenon is intrinsic to SGD.

In this work, we revisit the convergence rate analysis of SGD with a polynomially decaying stepsize
schedule for small initial stepsize $c_0$, and aim at addressing the saturation phenomenon,
under the standard source condition. First we state the standing assumptions for the analysis of
SGD. The choice in (i) is commonly known as a polynomially decaying stepsize schedule.
Part (ii) is the classical source condition, which represents a type of smoothness of the
initial error $x^\dag-x_1$ (with respect to the matrix $B$), and the condition on $B$ is easily achieved by
rescaling the problem. Source type conditions are needed in order to derive convergence rate,
without which the convergence can be arbitrarily slow \cite{EnglHankeNeubauer:1996}. Loosely
speaking, it restricts $x^\dag-x_1$ to a suitable subspace which enables quantitatively bounding
the approximation error. Note that the condition generally is insufficient to ensure a contractive
map for the Landweber method. Below we shall focus on the case $\nu> \frac12$, for which the
current analysis \cite{JinLu:2019} exhibits the saturation phenomenon, as mentioned above. (iii)
assumes that the forward map $A$ takes a special form. Alternatively, it can be viewed as SGD applied to
a preconditioned version of problem \eqref{eqn:lininv}. To validate this condition,
we present some numerical results for typical inverse problems in Subsection\,\ref{sec:condition},
which indicates that this structure is irrelevant to the performance of SGD
in the sense that it performs nearly identically on the problems with or without this
structure. Thus this restriction is due to the limitation of the proof technique; 
{see Remark \ref{rmk:cond} for the obstruction in the proof in the general case.}
\begin{assumption}\label{ass:stepsize}
Let $B=n^{-1}A^tA$ with $\|B\|\leq 1$. The following assumptions hold.
\begin{itemize}
\item[$\rm(i)$] The stepsize $\eta_j = c_0 j^{-\alpha}$, $j=1,\cdots, \alpha\in [0,1)$, with
  $$c_0 \leq \min\big({(\max_i \|a_i\|^2)^{-1}},1\big)\quad\mbox{and}\quad c_0\|B\|\leq (2e)^{-1}.$$
\item[$\rm(ii)$]There exist $w\in\mathbb{R}^m$ and $\nu>\frac12$ such that the exact solution $x^\dag$ satisfies
\begin{equation}\label{eqn:source}
x^\dag-x_1=B^\nu w.
\end{equation}
\item[{\rm(iii)}] {The matrix $A=\Sigma V^t$ with $\Sigma$ being diagonal and nonnegative and $V$ column orthonormal.}
\end{itemize}
\end{assumption}

Now we can state the main result of the work. By choosing the stopping index $k(\delta)$ in accordance with the (unknown) regularity index $\nu$ as
$  k = O(\|w\|\delta^{-1})^\frac{2}{(1+2\nu)(1-\alpha)},$
the result implies a convergence rate
\begin{equation*}
  \E[\|e_{k(\delta)}^\delta\|^2]^\frac12\leq c\|w\|^\frac{1}{1+2\nu}\delta^\frac{2\nu}{1+2\nu},
\end{equation*}
which is identical with that for Landweber method \cite[Chapter 6]{EnglHankeNeubauer:1996}. Thus, under
the given condition, the aforementioned saturation
phenomenon does not occur for SGD. This result partly settles the saturation phenomenon, and complements existing
analysis \cite{JinLu:2019,JinZhouZou:2020}.
\begin{theorem}\label{thm:main}
Let Assumption \ref{ass:stepsize} hold, and $c_0$ be sufficiently small. Then there exist constants
$c_*$ and $c_{**}$, which depend on $\nu$, $n$, $c_0$ and $\alpha$, such that
\begin{equation*}
\E[\|e_k^\delta\|^2]\leq {c_*} k^{-2\nu(1-\alpha)}\|w\|^2+{c_{**}}\delta^2 k^{1-\alpha}.
\end{equation*}
\end{theorem}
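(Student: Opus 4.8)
The plan is to diagonalize the dynamics using Assumption \ref{ass:stepsize}(iii) and reduce the whole analysis to a family of decoupled scalar second-moment recursions. First I would rewrite the error recursion: since $y_i^\delta=(a_i,x^\dag)+\xi_i$, subtracting $x^\dag$ from \eqref{eqn:SGD} gives, with $e_k^\delta=x_k^\delta-x^\dag$,
\[
e_{k+1}^\delta=(I-\eta_k a_{i_k}a_{i_k}^t)e_k^\delta+\eta_k\xi_{i_k}a_{i_k}.
\]
Now invoke (iii): writing $A=\Sigma V^t$ with $V=[v_1,\dots]$ column orthonormal and $\Sigma=\mathrm{diag}(\sigma_i)$, the $i$th row is $a_i=\sigma_i v_i$, so $B=n^{-1}A^tA$ has eigenpairs $(\lambda_q,v_q)$ with $\lambda_q=\sigma_q^2/n$. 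The crucial structural consequence is that $a_{i_k}a_{i_k}^t=\sigma_{i_k}^2 v_{i_k}v_{i_k}^t$ is a rank-one projection onto a single eigendirection, so each SGD step touches exactly one coordinate. Setting $\epsilon_{k,q}=(v_q,e_k^\delta)$, the recursion decouples across $q$: if the sampled index equals $q$ (probability $1/n$) then $\epsilon_{k+1,q}=(1-\eta_k\sigma_q^2)\epsilon_{k,q}+\eta_k\sigma_q\xi_q$, and otherwise $\epsilon_{k+1,q}=\epsilon_{k,q}$. Since $e_1=-B^\nu w$ lies in the row space, only directions with $\lambda_q>0$ contribute, and $\E[\|e_k^\delta\|^2]=\sum_q u_{k,q}$ with $u_{k,q}:=\E[\epsilon_{k,q}^2]$. \emph{This decoupling is exactly what (iii) buys}: without it, off-diagonal second moments $\E[\epsilon_{k,p}\epsilon_{k,q}]$ couple into the variance and produce the saturation floor.

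Next I would derive the closed scalar recursions for the mean $\mu_{k,q}:=\E[\epsilon_{k,q}]$ and the second moment $u_{k,q}$. Averaging over the fresh index gives the Landweber-type mean recursion $\mu_{k+1,q}=(1-\eta_k\lambda_q)\mu_{k,q}+n^{-1}\eta_k\sigma_q\xi_q$, while expanding the square yields
\[
u_{k+1,q}=\bigl(1-2\eta_k\lambda_q+n\eta_k^2\lambda_q^2\bigr)u_{k,q}+\tfrac{2}{n}(1-\eta_k\sigma_q^2)\eta_k\sigma_q\xi_q\,\mu_{k,q}+\eta_k^2\lambda_q\xi_q^2.
\]
The term $n\eta_k^2\lambda_q^2$ is precisely the computational variance injected by using a random gradient, and it is a factor $n$ larger than the corresponding deterministic term $\eta_k^2\lambda_q^2$. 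The key observation is that under the stepsize constraint $c_0\max_i\|a_i\|^2\le1$, which is equivalent to $n\eta_k\lambda_q\le c_0 n\|B\|\le1$, the second-moment contraction factor satisfies
\[
0\le 1-2\eta_k\lambda_q+n\eta_k^2\lambda_q^2=1-\eta_k\lambda_q(2-n\eta_k\lambda_q)\le 1-\eta_k\lambda_q,
\]
so the variance contracts at the \emph{same} rate as the mean (the lower bound and positivity using $c_0\|B\|\le(2e)^{-1}$). Thus the computational variance is absorbed into the Landweber-type decay rather than accumulating.

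I would then unroll the $u_{k,q}$ recursion into a bias term, a cross term, and a noise term, sum over $q$, and estimate each with the bound $\prod_{j}(1-\eta_j\lambda_q)\le e^{-\lambda_q S_k}$, where $S_k:=\sum_{j=1}^{k}\eta_j\asymp \frac{c_0}{1-\alpha}k^{1-\alpha}$. The bias term is $\sum_q e^{-\lambda_q S_k}\lambda_q^{2\nu}(v_q,w)^2$, which the sharp spectral estimate $\sup_{\lambda\ge0}e^{-\lambda S}\lambda^{2\nu}=c_\nu S^{-2\nu}$ bounds by $c_\nu S_k^{-2\nu}\|w\|^2\le c_* k^{-2\nu(1-\alpha)}\|w\|^2$; crucially this holds for \emph{all} $\nu>\tfrac12$, giving the first claimed term with no saturation. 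For the noise term, summing $\eta_j^2\lambda_q\xi_q^2\,e^{-\lambda_q(S_k-S_j)}$ against the propagator and using the standard estimate $\sup_\lambda\lambda^{1/2}\sum_j\eta_j\prod_{l>j}(1-\eta_l\lambda)\le cS_k^{1/2}$ produces $c_{**}\delta^2 k^{1-\alpha}$ (here the factor $n^{-1}$ carried by $\sigma_q/n$ keeps constants finite); the genuinely stochastic piece $\sum_q\eta_j^2\lambda_q\xi_q^2\cdot(\cdots)$ is of the lower order $O(c_0\delta^2)$ and is dominated. The cross term $\propto n^{-1}\eta_j\sigma_q\xi_q\mu_{j,q}$ I would split via $\mu_{j,q}=\mu_{j,q}^{\mathrm{bias}}+\mu_{j,q}^{\mathrm{noise}}$ and control by a weighted Young's inequality, absorbing the pieces into the bias and noise bounds.

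\textbf{Main obstacle.} The hard part is not the bias decay itself but certifying that the \emph{computational variance does not degrade that decay}. The entire argument hinges on the inequality $1-2\eta_k\lambda_q+n\eta_k^2\lambda_q^2\le1-\eta_k\lambda_q$, which forces the second-moment propagator to match the mean propagator so that the $S_k^{-2\nu}$ estimate applies for every $\nu$; this is where the smallness of $c_0$ is essential, both to guarantee $n\eta_k\lambda_q\le1$ and to absorb the cross term with acceptable constants. The equally delicate point is the reduction to decoupled coordinates: Assumption \ref{ass:stepsize}(iii) is what removes the off-diagonal variance coupling responsible for the $O(\delta^{1/2})$ saturation in the general analysis, and making the decoupling and the ensuing bookkeeping rigorous (rather than the individual sum estimates, which are routine) is where I expect the real work to lie.
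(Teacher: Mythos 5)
Your proposal is correct, and it reaches Theorem \ref{thm:main} by a genuinely different route than the paper. The paper never diagonalizes: it proves a recursive $\ell$-fold bias--variance decomposition (Theorem \ref{thm:decomp}), in which Assumption \ref{ass:stepsize}(iii) enters only through the identities \eqref{eqn:Axi-N} and \eqref{eqn:Axi-exp}; the approximation and propagation parts are then bounded by multi-index summation estimates (Propositions \ref{err:apx} and \ref{err:ppg}), and the leftover stochastic term is closed by induction on $k$ --- and it is precisely in closing that induction (forcing the quantities $\xi_1(k),\xi_2(k)\le\tfrac12$) that the extra smallness $c_0=O(n^{-1})$ gets consumed. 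You instead use Assumption \ref{ass:stepsize}(iii) at full strength: row orthogonality ($a_i=\sigma_i v_i$) turns SGD into a randomized coordinate iteration in the eigenbasis of $B$, the second moments decouple exactly (your recursions for $\mu_{k,q}$ and $u_{k,q}$ are correct, as is the absorption inequality $1-2\eta_k\lambda_q+n\eta_k^2\lambda_q^2\le 1-\eta_k\lambda_q$), and the theorem follows by closed-form unrolling plus standard scalar sup-estimates, with no induction and no multi-index combinatorics. Two observations. First, your argument is in fact slightly stronger than the stated theorem: under (iii) one has $n\|B\|=\max_i\|a_i\|^2$, so $n\eta_k\lambda_q\le 1$ is already implied by Assumption \ref{ass:stepsize}(i), and since there is no induction hypothesis to reproduce, nothing in your proof requires $c_0$ to be small beyond (i) --- the constants from the Young-splitting of the cross term simply accumulate additively into $c_*$ and $c_{**}$, so the ``sufficiently small $c_0$'' hypothesis is never used. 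Second, the trade-off: your diagonalization is rigid --- it collapses as soon as the rows fail to be exactly orthogonal, since off-diagonal second moments $\E[\epsilon_{k,p}\epsilon_{k,q}]$ then re-enter the variance recursion, which is exactly the obstruction recorded in Remark \ref{rmk:cond} --- whereas the paper's decomposition uses (iii) only through the two identities above and, as the authors note, tolerates relaxed versions of them at the cost of different constants, so it is the machinery one would try to push toward general $A$, the open problem emphasized in the paper's conclusion.
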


The condition $c_0$ being sufficiently small can be made more precise as $c_0=O(n^{-1})$.
Note that the stepsize choice $O(n^{-1})$ has been extensively used in the convergence analysis
of stochastic gradient descent with random shuffling \cite{Ying:2018,Haochen:2019,SafranShamir:2020}.
In Theorem \ref{thm:main}, the constant condition on $c_0$ is not given explicitly.
When $\alpha=0$, the following condition is sufficient 
\begin{align}
2(1+\phi(2\epsilon))nc_0^{2-2\epsilon}\leq1,\quad \mbox{for some } \epsilon\in(\tfrac12,1),
\end{align}
and the function $\phi$ being defined in Lemma \ref{eqn:basic-bdd} below; see Theorem \ref{thm:main:al0}.
The numerical experiments in Section \ref{sec:numer} indicate that with a
small initial stepsize $c_0$, SGD can indeed deliver reconstructions with accuracy comparable with that by
the Landweber method, for a range of regularity index and noise levels, and in the absence of the
smallness condition on $c_0$, the results obtained by SGD are indeed suboptimal. These numerical
results indicate the necessity and sufficiency of a small stepsize for achieving the optimal convergence rate.

The general strategy of proof is to decompose
the error $e_k^\delta := x_k^\delta-x^\dag$ into three components (with $x_k$ being the SGD iterate for exact data $y^\dag$)
\begin{align*}
  x_{k}^\delta - x^\dag  & = (\E[x_k] - x^\dag) + (\E[x_k^\delta]-\E[x_k]) + (x_k^\delta-\E[x_k^\delta]),
\end{align*}
which represent respectively approximation error due to early stopping, propagation error due to data
noise, and stochastic error due to randomness of gradient estimate, and then to bound the terms by
bias-variance decomposition and the triangle inequality as
\begin{align*}
  \E[\|x_{k}^\delta - x^\dag\|^2]  & \leq 2\|\E[x_k] - x^\dag\|^2 + 2\|\E[x_k^\delta]-\E[x_k]\|^2 + \E[\|x_k^\delta-\E[x_k^\delta]\|^2].
\end{align*}
In our analysis, we refine this decomposition by repeatedly expanding the random iterate noise within
the third term and applying the bias-variance decomposition up to the $\ell$th fold; see Theorem
\ref{thm:decomp} for the details. In the decomposition, Assumption \ref{ass:stepsize}(iii) is used in
an essential manner to arrive at a simple recursion. It improves
the existing analysis \cite{JinLu:2019,JinZhouZou:2020} for SGD in the sense that
the stochastic component is further decomposed. Then the analysis proceeds by bounding the first two components separately, and the
third component by recursion, which in turn all involve lengthy computation of certain summations. It is
noteworthy that for the case of a constant stepsize, the convergence analysis can be greatly simplified; see Section
\ref{ssec:al=0} for the details.

Last, we situate the current work within a large body of literature on SGD.
The convergence issue of SGD has been extensively studied in different
senses, and two main lines of research that are related to this work are optimization
and statistical learning, besides the aforementioned results for inverse problems.
In the context of optimization, when the objective function is strictly convex,
many results on the convergence of the iterates to the global minimizer are available;
see, e.g. \cite{JentzenvonWurstemberger:2020} for matching lower and upper bounds, and the references
therein for further results. Note that $J(x)$ in \eqref{eqn:obj} is not strictly
convex. In general, the convergence of SGD is often measured by the optimality
gap (i.e., the expected objective function value to the optimal one) or the magnitude of
the gradient. See the survey \cite{BottouCurtisNocedal:2018} for a recent overview on this
line of research, including advances on nonconvex problems.
Very recently the work \cite{FehrmanGessJentzen:2020} proves the local convergence of SGD with rates to
minima of the objective function, while avoiding convexity or contractivity assumptions.
It is noteworthy that these results cannot be directly compared with the convergence rates
given in Theorem \ref{thm:main}, since the global minimizer to the objective function $J(x)$ is not of practical
interest, due to the ill-conditioning of $A$. This represents one essential difference between the
results from optimization and that from regularization theory. The second line of research is the
generalization error in reproducing kernel Hilbert spaces in statistical learning theory
\cite{YingPontil:2008,TarresYao:2014,DieuleveutBach:2016,LinRosasco:2017,PillaudRudiBach:2018,LeiHuTang:2021}.
These works aim at establishing upper bounds on the generalization error for SGD or its variants
(often combined with a suitable averaging scheme), which differs from the error bound on the
iterate itself. Nonetheless, the high level idea of analysis is similar: both use the
bias-variance decomposition to bound relevant quantities, which often depend on source type conditions given in Assumption \ref{ass:stepsize}(ii). {One major technical novelty
of this work} is to develop a recursive version of the bias-variance decomposition for
the mean squared error.

The rest of the paper is organized as follows. In Section \ref{sec:decomp}, we derive
a novel error decomposition, and then in Section \ref{sec:conv}, we give the convergence
rate analysis, by bounding the three error components of the SGD iterate $x_k^\delta$.
Finally, in Section \ref{sec:numer}, we provide some illustrative numerical experiments
to complement the theoretical analysis. Throughout, the notation $c$, with or without a
subscript, denotes a generic constant, which may differ at each occurrence, but it is
always independent of the iteration number $k$ (and the random index $i_k$) and the noise level $\delta$.

\section{Error decomposition}\label{sec:decomp}
In this part, we present several preliminary estimates and a refined error decomposition.

\subsection{Notation and preliminary estimates}

We will employ the following index sets extensively.
For any $k_1\leq k_2$ and $1\leq i\leq k_2-k_1+1$, let
{ \begin{align*}
   \mathcal{J}_{[k_1,k_2],i}& =\{\{j_\ell\}_{\ell=1}^i : k_1\leq j_i<j_{i-1}<\cdots<j_2<j_1\leq k_2\},\\
 J_i&=\{j_1, j_2, \cdots, j_i\}.
\end{align*}
Note that the set $\mathcal{J}_{[k_1,k_2],i}$ consists of (strictly monotone) multi-indices of length $i$, which arises naturally
in the proof of Theorem \ref{thm:decomp} below. For $i=0$, we adopt the convention
$\mathcal{J}_{[k_1,k_2],0}={\{\emptyset\}}$ and $J_0=\emptyset$.
For all $J_i=\{j_1, \dots, j_i\} \in \mathcal J_{[k_1, k_2],i}$, with $0\leq i\leq k_2-k_1+1$, we define
\begin{equation*}
   J^c_{[k_1, k_2],i}=\{k_1, \dots, k_2\}\setminus J_i,
\end{equation*}
where we omit the dependency on $J_i$ for notational simplicity. In particular,
\begin{equation*}
  \mathcal{J}_{[k_1,k_2],1}=\{\{k_1\},\ldots,\{k_2\}\}\quad \mbox{and}\quad J_{[k_1,k_2],0}^c=\{k_1,\cdots, k_2\}.
\end{equation*}
For $i> k_2-k_1+1$, we adopt the convention $\mathcal{J}_{[k_1,k_2],i}=\{\emptyset\}$, $J_i=\emptyset$,
$J_{[k_1,k_2],i}^c=\emptyset$.}

The next lemma collects useful identities on the summation over the indices $\mathcal{J}_{[1,k],i+1}$.
\begin{lemma}\label{lem:sum_J}
The following identities hold:
\begin{align}
\sum_{J_{i+1}\in\mathcal{J}_{[1,k],i+1}}&=\sum_{J_i\in\mathcal{J}_{[2,k],i}}\sum_{j_{i+1}=1}^{j_i-1} =\sum_{j_{i+1}=1}^{k-i}\sum_{J_i\in\mathcal{J}_{[j_{i+1}+1,k],i}}.\label{eqn:sum2}
\end{align}
\end{lemma}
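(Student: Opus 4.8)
The plan is to read both stated equalities as Fubini-type reorderings of one and the same sum over the set $\mathcal{J}_{[1,k],i+1}$ of strictly decreasing multi-indices of length $i+1$, obtained by singling out the \emph{smallest} entry $j_{i+1}$. Since the identities carry no summand, they are to be read as equalities of iterated summation operators, so it suffices to verify that the three iterated index ranges enumerate exactly the same collection of tuples $(j_1,\dots,j_{i+1})$ subject to $1\leq j_{i+1}<j_i<\cdots<j_1\leq k$. No analytic input is required; the proof is a careful bookkeeping of index ranges together with the empty-tuple conventions fixed just before the lemma.

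First I would establish the left equality. Every $J_{i+1}=\{j_1,\dots,j_{i+1}\}\in\mathcal{J}_{[1,k],i+1}$ decomposes uniquely into a head $J_i=\{j_1,\dots,j_i\}$ and its least element $j_{i+1}$, and conversely such a pair reconstitutes a unique element of $\mathcal{J}_{[1,k],i+1}$ precisely when $1\leq j_{i+1}<j_i$. The requirement $j_{i+1}\geq 1$ together with $j_{i+1}<j_i$ forces $j_i\geq 2$, while the bound $j_1\leq k$ is inherited from the original tuple; hence the admissible heads are exactly those $J_i\in\mathcal{J}_{[2,k],i}$, and for each such head the remaining index $j_{i+1}$ ranges over $\{1,\dots,j_i-1\}$. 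This is the first identity.

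For the right equality I would reverse the order of summation over the same set of pairs $\{(J_i,j_{i+1}):1\leq j_{i+1}<j_i\}$, now summing over $j_{i+1}$ on the outside. For a fixed $j_{i+1}$ the head must consist of $i$ strictly decreasing indices, all exceeding $j_{i+1}$ and bounded above by $k$, i.e. $J_i\in\mathcal{J}_{[j_{i+1}+1,k],i}$; the outer range of $j_{i+1}$ is then delimited by the values for which this inner set contains a genuine length-$i$ tuple, namely $1\leq j_{i+1}\leq k-i$. The only point requiring real care is matching these stated limits ($j_i-1$ and $k-i$) and interval endpoints ($[2,k]$ and $[j_{i+1}+1,k]$) against the conventions adopted before the lemma in the degenerate regimes — for instance $i=0$, or $i+1>k$, where $\mathcal{J}_{[1,k],i+1}$ carries no nontrivial tuple and all three sides must collapse consistently under the rule $\mathcal{J}_{[k_1,k_2],i}=\{\emptyset\}$. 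Verifying that these boundary cases are correctly absorbed by the conventions would complete the proof.
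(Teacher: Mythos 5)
Your proof is correct and follows essentially the same route as the paper's: both identities are verified by unfolding the definition of $\mathcal{J}_{[1,k],i+1}$ and reordering the iterated sum, isolating the smallest index $j_{i+1}$ either innermost (first identity) or outermost (second identity). The paper simply writes the two nested-sum expansions explicitly, which is exactly the bookkeeping your bijective decomposition formalizes.
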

\begin{proof}
The identities are direct from the definition:
\begin{equation*}
\begin{split}
\sum_{J_{i+1}\in\mathcal{J}_{[1,k],i+1}}&=\sum_{j_1=i+1}^{k}\cdots\sum_{j_{i}=2}^{j_{i-1}-1}\sum_{j_{i+1}=1}^{j_i-1}=\sum_{J_i\in\mathcal{J}_{[2,k],i}}\sum_{j_{i+1}=1}^{j_i-1},\\
\sum_{J_{i+1}\in\mathcal{J}_{[1,k],i+1}}&=\sum_{j_{i+1}=1}^{k-i}\sum_{j_{i}=j_{i+1}+1}^{k-i+1}\cdots\sum_{j_1=j_2+1}^{k}=\sum_{j_{i+1}=1}^{k-i}\sum_{J_i\in\mathcal{J}_{[j_{i+1}+1,k],i}}.
\end{split}
\end{equation*}
This shows directly the assertion.
\end{proof}

We use the following elementary inequality extensively.
\begin{lemma}
For any $k\in\mathbb{N}$ and $s\in\mathbb{R}$, there holds
\begin{equation}\label{eqn:basic-bdd}
  \sum_{j=1}^k j^{-s} \leq \left\{\begin{array}{ll}
   2^{1-s}(1-s)^{-1}k^{1-s}, &s<0,\\
    (1-s)^{-1}k^{1-s}, & s \in [0,1),\\
    2\max(\ln k,1), & s =1, \\
    s(s-1)^{-1}, & s >1.
  \end{array}\right.
\end{equation}
\end{lemma}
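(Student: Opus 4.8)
The plan is to establish all four cases by the integral comparison (integral test), exploiting the monotonicity of the function $t\mapsto t^{-s}$, which is strictly increasing for $s<0$ and non-increasing for $s\geq 0$. In each case I would bound the finite sum by an integral of $t^{-s}$ over a suitable interval, evaluate that integral in closed form, and then perform a short simplification to match the stated right-hand side. The slightly loose constants appearing in \eqref{eqn:basic-bdd} are exactly what leaves room for these simplifications.

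First I would treat $s<0$. Since $j\mapsto j^{-s}$ is increasing, each summand obeys $j^{-s}\leq\int_j^{j+1}t^{-s}\,dt$, and summing telescopes the right-hand side into $\int_1^{k+1}t^{-s}\,dt=(1-s)^{-1}\bigl[(k+1)^{1-s}-1\bigr]$. I would then drop the $-1$, use $k+1\leq 2k$ together with $1-s>0$, and absorb the factor to obtain $2^{1-s}(1-s)^{-1}k^{1-s}$.

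For $s\in[0,1)$ the summand is instead non-increasing, so $j^{-s}\leq\int_{j-1}^{j}t^{-s}\,dt$ holds for every $j\geq1$, the left endpoint $0$ being harmless because $t^{-s}$ is integrable at the origin when $s<1$; summing directly yields $\int_0^k t^{-s}\,dt=(1-s)^{-1}k^{1-s}$, which is the claimed bound (with equality at $s=0$). The same decreasing comparison, now started at $j=2$ to skip the non-integrable singularity, handles the remaining two cases: for $s=1$ it gives $\sum_{j=1}^k j^{-1}\leq 1+\int_1^k t^{-1}\,dt=1+\ln k$, and for $s>1$ it gives $\sum_{j=1}^k j^{-s}\leq 1+\int_1^k t^{-s}\,dt\leq 1+(s-1)^{-1}=s(s-1)^{-1}$, where I discard the negative $-k^{1-s}$ contribution.

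The only case-specific bookkeeping is matching the constant $2\max(\ln k,1)$ for $s=1$: from the bound $1+\ln k$ I would split on whether $\ln k\geq 1$ or $\ln k<1$ and verify $1+\ln k\leq 2\max(\ln k,1)$ in each regime, checking the small values $k=1,2$ by hand. I do not expect any genuine obstacle; every step is elementary, and the main point is simply to be careful about which endpoint the integral comparison may be started from (namely $j=1$ when the integrand is integrable at the origin, and $j=2$ otherwise).
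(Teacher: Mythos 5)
Your proof is correct in all four cases: the increasing-function comparison with $k+1\leq 2k$ for $s<0$, the comparison started at $j-1$ (using integrability of $t^{-s}$ at the origin) for $s\in[0,1)$, and the comparison started at $j=2$ for $s=1$ and $s>1$ all yield exactly the stated constants, and your case split verifying $1+\ln k\leq 2\max(\ln k,1)$ closes the only delicate point. Note that the paper states this lemma as an elementary inequality and provides no proof at all, so your integral-test argument is precisely the standard argument the authors implicitly rely on; there is no gap and nothing to reconcile.
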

Throughout, we denote the constant and power on the right hand side of the inequality \eqref{eqn:basic-bdd} by
$\phi(s)$ and $k^{\max(1-s,0)}$, respectively, with the shorthand
$k^{\max(0,0)}=\max(\ln k,1).$

The next result bounds the spectral norm of the matrix product $\Pi_J(B)B^s$, which,
for each index set $J$, is defined by (with the convention $\Pi_{\emptyset}(B)=I$)
\begin{equation*}
  \Pi_{J}(B)=\prod_{j\in J}(I-\eta_{j}B).
\end{equation*}

\begin{lemma}\label{lem:kernel}
Under Assumption \ref{ass:stepsize}{\rm(i)}, { for any $s>0$ and $J_\ell \in \mathcal J_{[k', k],\ell}$ with $k'\leq k$, $0\leq\ell< k+1-k'$,}
\begin{align*}
\|\Pi_{J_{[k',k],\ell}^c}(B) B^s\|\leq s^s(ec_0)^{-s} (k+1-k'-\ell)^{-s}k^{\alpha s}.
\end{align*}
\end{lemma}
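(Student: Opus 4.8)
The plan is to reduce the operator-norm bound to a scalar maximization over the spectrum of $B$, exploiting that $B=n^{-1}A^tA$ is symmetric and positive semidefinite with $\|B\|\le1$. Diagonalizing $B$, the matrix $\Pi_{J_{[k',k],\ell}^c}(B)B^s$ is simultaneously diagonalized, and its spectral norm equals $\max_{\lambda}\big(\prod_{j\in J_{[k',k],\ell}^c}(1-\eta_j\lambda)\big)\lambda^s$, where the maximum runs over the eigenvalues $\lambda\in[0,\|B\|]\subseteq[0,1]$ of $B$. Thus it suffices to bound the scalar function $g(\lambda)=\big(\prod_{j\in J_{[k',k],\ell}^c}(1-\eta_j\lambda)\big)\lambda^s$ uniformly in $\lambda\in[0,1]$.

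First I would check that every factor $1-\eta_j\lambda$ is nonnegative: by Assumption \ref{ass:stepsize}(i), $\eta_j\lambda=c_0j^{-\alpha}\lambda\le c_0\|B\|\le(2e)^{-1}<1$, so each factor lies in $(0,1]$. This permits applying the elementary inequality $1-x\le e^{-x}$ to each factor and collapsing the product into a single exponential, $\prod_{j\in J_{[k',k],\ell}^c}(1-\eta_j\lambda)\le\exp\big(-c_0\lambda\sum_{j\in J_{[k',k],\ell}^c}j^{-\alpha}\big)$.

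The crucial quantitative step is a lower bound on the exponent. Since $J_{[k',k],\ell}^c=\{k',\dots,k\}\setminus J_\ell$ has exactly $k+1-k'-\ell$ elements, all lying in $\{k',\dots,k\}$, and since $j\mapsto j^{-\alpha}$ is nonincreasing for $\alpha\ge0$, every term obeys $j^{-\alpha}\ge k^{-\alpha}$; hence $\sum_{j\in J_{[k',k],\ell}^c}j^{-\alpha}\ge(k+1-k'-\ell)k^{-\alpha}$. Writing $c:=c_0(k+1-k'-\ell)k^{-\alpha}$, this yields $g(\lambda)\le\lambda^se^{-c\lambda}$.

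Finally I would maximize $\lambda^se^{-c\lambda}$ over all $\lambda>0$, which dominates the constrained maximum over $[0,1]$ since $[0,1]\subset(0,\infty)$, so no case distinction on whether the stationary point $\lambda^*=s/c$ lies in $[0,1]$ is needed. The maximal value is $s^s(ec)^{-s}$; substituting back $c=c_0(k+1-k'-\ell)k^{-\alpha}$ gives precisely the claimed bound $s^s(ec_0)^{-s}(k+1-k'-\ell)^{-s}k^{\alpha s}$. I expect the only genuinely delicate point to be the cardinality count of $J_{[k',k],\ell}^c$ together with the monotonicity argument producing the factor $(k+1-k'-\ell)$; the remaining one-variable optimization is routine.
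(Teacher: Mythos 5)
Your proof is correct and takes essentially the same route as the paper's: reduction to a scalar bound over the spectrum of $B$, the inequality $1-x\le e^{-x}$ to collapse the product, one-variable maximization of $\lambda^s e^{-a\lambda}$ giving $s^s(ea)^{-s}$, and the cardinality/monotonicity bound $\sum_{j\in J^c_{[k',k],\ell}}\eta_j\ge c_0(k+1-k'-\ell)k^{-\alpha}$. The only (harmless) differences are that you check nonnegativity of the factors explicitly and bound the exponent before optimizing rather than after, which is equally valid since $a\mapsto e^{-\lambda a}$ is nonincreasing for $\lambda\ge 0$.
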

\begin{proof}
For any $s>0$ { and $J_\ell \in \mathcal J_{[k', k],\ell}$ with $k'\leq k$, $0\leq\ell< k+1-k'$,} there holds
\begin{align*}
\|\Pi_{J_{[k',k],\ell}^c}(B) B^s\|\leq\sup_{\lambda\in{\rm Sp}(B)} |\lambda^s \Pi_{J_{[k',k],\ell}^c}(\lambda)|=\sup_{\lambda
 \in{\rm Sp}(B)}\lambda^s \prod_{i\in J_{[k',k],\ell}^c}(1-\eta_i\lambda),
\end{align*}
where ${\rm Sp}(B)$ denotes the spectrum of $B$. For any $x\in\mathbb{R}$, there holds the inequality $1-x\leq e^{-x}$, and thus
\begin{align*}
\lambda^s \prod_{i\in J_{[k',k],\ell}^c}(1-\eta_i\lambda)\leq \lambda^s \prod_{i\in J_{[k',k],\ell}^c}e^{-\eta_i\lambda}= \lambda^s e^{-\lambda\sum_{i\in J_{[k',k],\ell}^c}\eta_i}.
\end{align*}
For the function $g(\lambda)=\lambda^s e^{-\lambda a}$, with $a>0$,
the maximum is attained at $\lambda^*=sa^{-1}$, with a maximum value $s^s(ea)^{-s}$.
Then setting $a=\sum_{i\in J_{[k',k],\ell}^c}\eta_i$ and applying the inequality
$a \geq c_0 (k+1-k'-\ell)k^{-\alpha}$
complete the proof of the lemma.
\end{proof}

The last lemma gives two useful bounds on the summations over the set $\mathcal{J}_{[1,k],i}$.
\begin{lemma}\label{lem:kernel2}
The following estimates hold.
\begin{itemize}
\item[$\rm(i)$] For any $k\geq2$, { $\alpha\in [0,1)$} and $2\leq i\leq k$, there holds
\begin{align*}
\sum_{J_i\in\mathcal{J}_{[1,k],i}}\prod_{t=1}^{i}j_t^{-2\alpha} 
\leq\phi(2\alpha)^i (k^{\max(1-2\alpha,0)})^i.
\end{align*}
\item[$\rm(ii)$]For any $j=0,\cdots,k-1$ and $i=1,\cdots,k-j$, we have
\begin{equation*}
\sum_{J_i\in\mathcal{J}_{[j
+1,k],i}}1\leq\frac{(k-j)^i}{i!}.
\end{equation*}
\end{itemize}
\end{lemma}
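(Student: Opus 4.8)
The plan is to treat the two estimates separately, in both cases exploiting the fact that $\mathcal{J}_{[\cdot,\cdot],i}$ indexes \emph{strictly decreasing} multi-indices, which are in bijection with the $i$-element subsets of the underlying index range. This combinatorial observation is what reduces each part to the already-established inequality \eqref{eqn:basic-bdd} and elementary counting.

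For part (i), the key device is a symmetrization (de-ordering) argument. Since the summand $\prod_{t=1}^i j_t^{-2\alpha}$ is symmetric in the entries $j_1,\dots,j_i$, and each strictly decreasing tuple in $\mathcal{J}_{[1,k],i}$ corresponds to a set of $i$ distinct indices whose $i!$ orderings are all ordered $i$-tuples with pairwise distinct entries drawn from $\{1,\dots,k\}$, I would bound
\[
i!\sum_{J_i\in\mathcal{J}_{[1,k],i}}\prod_{t=1}^{i}j_t^{-2\alpha} \leq \sum_{j_1=1}^{k}\cdots\sum_{j_i=1}^{k}\prod_{t=1}^{i}j_t^{-2\alpha} = \Big(\sum_{j=1}^k j^{-2\alpha}\Big)^i,
\]
where the middle triple expression ranges over \emph{all} ordered $i$-tuples (repetitions allowed) and therefore contains the distinct-entry orderings as a sub-collection, the remaining (repeated-entry) tuples contributing only nonnegative terms. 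I would then invoke \eqref{eqn:basic-bdd} with $s=2\alpha\in[0,2)$, giving $\sum_{j=1}^k j^{-2\alpha}\leq \phi(2\alpha)\,k^{\max(1-2\alpha,0)}$, and finally discard the harmless factor $(i!)^{-1}\leq 1$ to arrive at the stated bound $\phi(2\alpha)^i\,(k^{\max(1-2\alpha,0)})^i$.

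For part (ii), I would argue by direct counting. The sum $\sum_{J_i\in\mathcal{J}_{[j+1,k],i}}1$ equals the number of strictly decreasing length-$i$ tuples drawn from the $(k-j)$-element set $\{j+1,\dots,k\}$, which is precisely the binomial coefficient $\binom{k-j}{i}$. Writing
\[
\binom{k-j}{i}=\frac{(k-j)(k-j-1)\cdots(k-j-i+1)}{i!}
\]
and bounding each of the $i$ numerator factors by $k-j$ yields $\binom{k-j}{i}\leq (k-j)^i/i!$, which is the claimed estimate.

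Neither step is technically demanding, and there is no single hard obstacle here; the only genuinely substantive point is the de-ordering inequality in part (i), where one must verify that passing from strictly monotone tuples to all ordered tuples inserts exactly the factor $i!$ as an \emph{upper} bound—the extra ordered tuples with repeated entries only enlarge the right-hand side, which is admissible since we seek an upper estimate. Everything else is a routine application of \eqref{eqn:basic-bdd} together with the elementary bound on the binomial coefficient.
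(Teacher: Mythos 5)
Your proof is correct. For part (i) you take essentially the same route as the paper: both arguments relax the strict ordering constraint so that each index ranges freely over $\{1,\dots,k\}$ and then invoke \eqref{eqn:basic-bdd} with $s=2\alpha\in[0,2)$; your symmetrization in fact yields the sharper bound $\frac{1}{i!}\big(\phi(2\alpha)k^{\max(1-2\alpha,0)}\big)^i$ before you discard the factor $(i!)^{-1}\le 1$, a refinement the paper does not record (it simply bounds the nested constrained sums by the product of unconstrained ones). For part (ii) your argument is genuinely different and cleaner: you identify the sum exactly as the binomial coefficient $\binom{k-j}{i}$, since strictly decreasing $i$-tuples from $\{j+1,\dots,k\}$ are in bijection with $i$-element subsets, and then use $\binom{k-j}{i}=\frac{(k-j)(k-j-1)\cdots(k-j-i+1)}{i!}\le \frac{(k-j)^i}{i!}$. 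The paper instead estimates the nested sums iteratively, repeatedly applying $\sum_{t=1}^T t^s\le (s+1)^{-1}(T+1)^{s+1}$, which accumulates the factors $1,\tfrac12,\dots,\tfrac1i$ to produce the $i!$ in the denominator. Your counting argument is exact and avoids the bookkeeping of the iterated estimate; the paper's version has the mild advantage of reusing the same summation technique that appears in its other lemmas, but both deliver the identical bound.
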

\begin{proof}
Assertion (i) follows from \eqref{eqn:basic-bdd} as
\begin{align*}
\sum_{J_i\in\mathcal{J}_{[1,k],i}}\prod_{t=1}^{i}j_t^{-2\alpha}&=
\sum_{j_1=i}^k\sum_{j_2={i-1}}^{j_1-1}\cdots\sum_{j_i=1}^{j_{i-1}-1}j_i^{-2\alpha}\prod_{t=1}^{i-1}j_t^{-2\alpha}\\
&\leq\prod_{t=1}^{i}\Big(\sum_{j_t=1}^k j_t^{-2\alpha}\Big)\leq
\big(\phi(2\alpha)k^{\max(1-2\alpha,0)}\big)^i.
\end{align*}
By the definition of the index set $\mathcal{J}_{[j+1,k],i}$, we have the identity
\begin{align*}
 &\sum_{J_i\in\mathcal{J}_{[j
+1,k],i}}1=\sum_{j_{i}=j+1}^{k-i+1}\cdots\sum_{j_2=j_3+1}^{k-1}\sum_{j_1=j_2+1}^{k}1\\
\leq&\sum_{j_{i}=j+1}^{k-1}\cdots\sum_{j_2=j_3+1}^{k-1}\sum_{j_1=j_2+1}^{k}1=\sum_{j_{i}=j+1}^{k-1}\cdots\sum_{j_2=j_3+1}^{k-1}(k-j_2).
\end{align*}
then assertion (ii) follows by repeatedly applying the inequality
\begin{equation*}
  \sum_{t=1}^T t^s\leq(s+1)^{-1}(T+1)^{s+1},\quad \forall T\in \mathbb{N}, s\geq 0.
\end{equation*}
This completes the proof of the lemma.
\end{proof}

\subsection{Error decomposition}

Now we derive an important error decomposition. Below, we denote the SGD iterates for the exact data $y^\dag$
and noisy data $y^\delta$ by $x_k$ and $x_k^\delta$, respectively, and also use the following
shorthand notation:
\begin{equation*}
  \bar A= n^{-\frac12}A,\quad \bar \xi = n^{-\frac12}\xi,\quad \bar{\delta}=n^{-\frac12}\delta, \quad \mbox{and}\quad e_k=x_k-x^\dag.
\end{equation*}
The following result plays a central role in the convergence analysis.
\begin{theorem}\label{thm:decomp}
Under Assumption \ref{ass:stepsize}{\rm(iii\rm)},
for any $0\leq\ell<k$, the following error decomposition holds
\begin{align}\label{eqn:recu}
\E[\|e_{k+1}^\delta\|^2] \leq \sum_{i=0}^\ell {\rm I}_{i,1}^\delta+\sum_{i=0}^\ell {\rm I}_{i,2}^\delta+{ ({\rm I}_\ell^\delta)^c},
\end{align}
where the terms ${\rm I}_{i,j}^\delta$, {$i=0,1,\cdots,\ell$,} $j=1,2$, are defined by
\begin{align*}
{\rm I_{0,1}^\delta}&=2\|\Pi_{J^c_{[1,k],0}}(B)e_1\|^2,\\
{\rm I^\delta_{0,2}}&=2{\bar\delta}^2\Big(\Big\|\sum_{j=1}^{k}\eta_j\Pi_{J^c_{[j+1,k],0}}(B)B^{\frac12}\Big\|^2+(n-1)\sum_{j=1}^{k}\eta_j^2\|\Pi_{J^c_{[j+1,k],0}}(B)B^\frac12\|^2\Big),\\
{\rm I}_{i,1}^\delta&=2^{i+1}(n-1)^i \sum_{J_i\in\mathcal{J}_{[1,k],i}}\prod_{t=1}^{i}\eta_{j_t}^2\|\Pi_{J_{[1,k],i}^c}(B) B^i e_1\|^2, \quad \forall 1\leq i\leq \ell,\\
{\rm I}_{i,2}^\delta&=2^{i+1}(n-1)^i{\bar\delta}^2 \sum_{J_i\in\mathcal{J}_{[2,k],i}}\prod_{t=1}^{i}\eta_{j_t}^2\Big(\Big\|\sum_{j_{i+1}=1}^{j_i-1}\eta_{j_{i+1}}\Pi_{J_{[j_{i+1}+1,k],i}^c}(B) B^{i+\frac12}\Big\|^2\\
&\;\qquad+(n-1)\sum_{j_{i+1}=1}^{j_i-1}\eta_{j_{i+1}}^2\|\Pi_{J_{[j_{i+1}+1,k],i}^c}(B) B^{i+\frac12}\|^2\Big), \quad \forall 1\leq i\leq \ell,\\
({\rm I}_\ell^\delta)^c
&=2^{\ell+1}(n-1)^{\ell+1} \sum_{J_{\ell+1}\in\mathcal{J}_{[1,k],\ell+1}}\prod_{t=1}^{\ell+1}\eta_{j_t}^2\E[\|\Pi_{J^c_{[j_{\ell+1}+1,k],\ell}}(B)B^{\ell+1} e_{j_{\ell+1}}^\delta\|^2].
\end{align*}
\end{theorem}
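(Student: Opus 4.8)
The plan is to prove the decomposition by induction on the fold index $\ell$, the engine being a single ``one-fold'' bias--variance step that is applied repeatedly to the deepest error term. First I would record the one-step error recursion. Writing $P_i=a_ia_i^t$, the update \eqref{eqn:SGD} together with $Ax^\dag=y^\dag$ gives $e_{k+1}^\delta=(I-\eta_kP_{i_k})e_k^\delta+\eta_k\xi_{i_k}a_{i_k}$, and taking the conditional expectation over $i_k$ (note $\E[P_{i_k}]=B$ and $\E[\xi_{i_k}a_{i_k}]=\bar A^t\bar\xi$) yields the mean dynamics $\E[e_{k+1}^\delta\mid\mathcal F_k]=(I-\eta_kB)e_k^\delta+\eta_k\bar A^t\bar\xi$. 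The decisive role of Assumption \ref{ass:stepsize}(iii) enters through two spectral identities: since $A=\Sigma V^t$, the rows $a_i=\sigma_iv_i$ are orthogonal and every $P_i$ is simultaneously diagonalized with $B$ in the basis $\{v_i\}$. Consequently, for any matrix $M=g(B)$ that is a function of $B$ and any fixed vector $v$, I would establish the exact identity $\E[\|M(P_{i_k}-B)v\|^2]=(n-1)\|MBv\|^2$, the bound $\E[\|M(\xi_{i_k}a_{i_k}-\bar A^t\bar\xi)\|^2]\le(n-1)\bar\delta^2\|MB^{\frac12}\|^2$, and the norm equality $\|M\bar A^t\|=\|MB^{\frac12}\|$. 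These are the calculations that manufacture both the factor $(n-1)$ and the extra power of $B$ appearing in each term, and they are exactly where (iii) is indispensable.

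Next I would prove the base case $\ell=0$. Splitting $\E[\|e_{k+1}^\delta\|^2]=\|\E[e_{k+1}^\delta]\|^2+\E[\|e_{k+1}^\delta-\E[e_{k+1}^\delta]\|^2]$, I solve the mean recursion explicitly as $\E[e_{k+1}^\delta]=\Pi_{J^c_{[1,k],0}}(B)e_1+\sum_{j=1}^k\eta_j\Pi_{J^c_{[j+1,k],0}}(B)\bar A^t\bar\xi$ and bound its square by $\|x+y\|^2\le 2\|x\|^2+2\|y\|^2$, which yields ${\rm I}_{0,1}^\delta$ together with the first half of ${\rm I}_{0,2}^\delta$ through $\|M\bar A^t\|=\|MB^{\frac12}\|$. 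For the variance I would use the martingale-difference decomposition $e_{k+1}^\delta-\E[e_{k+1}^\delta]=\sum_{j=1}^k D_j$, with $D_j=\E[e_{k+1}^\delta\mid\mathcal F_{j+1}]-\E[e_{k+1}^\delta\mid\mathcal F_j]$; orthogonality of the $D_j$ gives $\E[\|e_{k+1}^\delta-\E[e_{k+1}^\delta]\|^2]=\sum_j\E[\|D_j\|^2]$, and a direct computation shows $D_j=\eta_j\Pi_{J^c_{[j+1,k],0}}(B)\big(-(P_{i_j}-B)e_j^\delta+(\xi_{i_j}a_{i_j}-\bar A^t\bar\xi)\big)$. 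Applying $\|x+y\|^2\le 2\|x\|^2+2\|y\|^2$ and the two spectral identities, the multiplicative part of each $D_j$ becomes $2(n-1)\eta_j^2\E[\|\Pi_{J^c_{[j+1,k],0}}(B)Be_j^\delta\|^2]$, whose sum over $j$ is precisely $({\rm I}_0^\delta)^c$, while the additive part contributes the $(n-1)$ half of ${\rm I}_{0,2}^\delta$.

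Finally I would run the induction. Each summand of $({\rm I}_\ell^\delta)^c$ has the form $\E[\|Me_{j_{\ell+1}}^\delta\|^2]$ with $M=\Pi_{J^c_{[j_{\ell+1}+1,k],\ell}}(B)B^{\ell+1}$ a function of $B$, so I re-apply the same one-fold step to $e_{j_{\ell+1}}^\delta$ over the horizon $[1,j_{\ell+1}-1]$. The mean part splits off a bias term that, after merging the two products $\Pi$ (whose index sets are disjoint and jointly exclude $j_{\ell+1}$) and combining $B^{\ell+1}\cdot B^{\frac12}$ or $B^{\ell+1}\cdot B$, reproduces exactly ${\rm I}_{\ell+1,1}^\delta$ and the first half of ${\rm I}_{\ell+1,2}^\delta$; the additive and multiplicative fluctuations then produce the $(n-1)$ half of ${\rm I}_{\ell+1,2}^\delta$ and a deeper remainder $({\rm I}_{\ell+1}^\delta)^c$ now carrying $B^{\ell+2}$. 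The constants update correctly because only the fluctuation acquires a further factor $2(n-1)$. Here Lemma \ref{lem:sum_J} is used to rewrite the nested strictly-decreasing sums, turning $\sum_{J_{\ell+1}\in\mathcal J_{[1,k],\ell+1}}\sum_{j_{\ell+2}=1}^{j_{\ell+1}-1}$ into a sum over $\mathcal J_{[1,k],\ell+2}$ and producing the index set $\mathcal J_{[2,k],\ell+1}$ in ${\rm I}_{\ell+1,2}^\delta$. The main obstacle is precisely this bookkeeping: verifying that each merge of the matrix products $\Pi_{J^c}$ lands on the exact complement set $J^c_{[\cdot,k],\ell+1}$ while the powers of $B$ and the combinatorial prefactors stay synchronized with the claimed formulas. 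Once the spectral identities from (iii) are in hand, the probabilistic content is light and the difficulty is entirely in this indexing.
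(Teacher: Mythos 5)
Your proposal is correct and follows essentially the same route as the paper's proof: the same recursive bias--variance decomposition driven by the SGD recursion, with Assumption \ref{ass:stepsize}(iii) supplying the spectral identities (the paper's \eqref{eqn:Axi-N} and \eqref{eqn:Axi-exp}) that generate the $(n-1)$ factors and the extra powers of $B$, and Lemma \ref{lem:sum_J} handling the index bookkeeping in the induction. Your packaging of the variance as a sum of martingale differences $D_j$ and of the (iii)-identities as standalone lemmas for matrices $M=g(B)$ is only an organizational variant of the paper's in-situ computations (its identity \eqref{eqn:stco_0} and the orthogonality relations \eqref{eqn:noise-ind1}--\eqref{eqn:noise-ind2}), and it reproduces the claimed terms and constants exactly.
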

\begin{proof}
{Recall that $J_i=\{j_1,\cdots,j_i\}$ for any $i\geq1$ and $J_0=\emptyset$.} By the definition of the SGD iteration \eqref{eqn:SGD}, we have
\begin{equation}\label{eq:sgditer}
e_{k}^\delta  =(I-\eta_{k-1} B)e_{k-1}^\delta+\eta_{k-1} H_{k-1}
 =\Pi_{J^c_{[1,k-1],0}} (B)e_1^\delta+\sum_{j=1}^{k-1}\big(\eta_j \Pi_{J^c_{[j+1,k-1],0}}(B)H_j\big),
\end{equation}
where $H_j$ is defined by
\begin{align}\label{eqn:Hj}
H_j& =B e_j^\delta-\big((a_{i_j},x_j^\delta)-y_{i_j}^\delta\big)a_{i_j}
=\big(B-a_{i_j}a_{i_j}^t) e_j^\delta+\xi_{i_j} a_{i_j}.
\end{align}
By bias-variance decomposition and triangle inequality, we have
\begin{equation}\label{eq:de}
\begin{split}
\E[\|e_{k+1}^\delta\|^2] & =\|\E[e_{k+1}^\delta]\|^2+\E[\|\E[x_{k+1}^\delta]-x_{k+1}^\delta\|^2]\\
& \leq 2\|\E[e_{k+1}]\|^2+2\|\E[x_{k+1}-x_{k+1}^\delta]\|^2+\E[\|\E[x_{k+1}^\delta]-x_{k+1}^\delta\|^2].
\end{split}
\end{equation}
It is known that the following estimates hold \cite{JinLu:2019}
\begin{align}
\|\E[e_{k+1}]\|&=\|\Pi_{J^c_{[1,k],0}}(B)e_1\|,\nonumber\\
\|\E[x_{k+1}-x_{k+1}^\delta]\|&\leq {\bar\delta}\|\sum_{j=1}^{k}\eta_j\Pi_{J^c_{[j+1,k],0}}(B)B^{\frac12}\|,\nonumber\\
\E[\|x_{k+1}^\delta-\E[x_{k+1}^\delta]\|^2]&= \sum_{j=1}^{k}\eta_j^2\E[\|\Pi_{J^c_{[j+1,k],0}}(B)A^t N_j^\delta\|^2],\label{eqn:stco_0}
\end{align}
with the iteration noise $N_j^\delta$ (at the $j$th SGD iteration) given by
\begin{equation*}
N_j^\delta=n^{-1}(A x_j^\delta-y^\delta)-((a_{i_j},x_j^\delta)-y^\delta_{i_j})b_{i_j},
\end{equation*}
where $b_i=(0,\ldots,0,1,0,\ldots,0)^t\in\mathbb{R}^n$ denotes the ${i}$th canonical Cartesian basis vector. Let
\begin{equation*}
  \tilde{N}^\delta_j=((a_{i_j},x_j^\delta)-y^\delta_{i_j})b_{i_j}=((a_{i_j},e_j^\delta)-\xi_{i_j})b_{i_j}.
\end{equation*}
Then the iteration noise $N_j^\delta$ can be rewritten as
\begin{equation*}
  N^\delta_j=\E[\tilde{N}^\delta_j|{ \mathcal{F}_{j}}]-\tilde{N}^\delta_j.
\end{equation*}
Next we claim that under Assumption \ref{ass:stepsize}(iii), there holds
\begin{align}\label{eqn:Axi-N}
\|\Pi_{J_{[j+1,k],0}^c}(B)A^t(A e_j^\delta-\xi)\|^2
=\sum_{i=1}^n\|\Pi_{J_{[j+1,k],0}^c}(B)A^t \big((a_{i},e^\delta_j)-\xi_i\big)b_{i}\|^2.
\end{align}
Actually, in view of Assumption \ref{ass:stepsize}(iii), for any $1\leq j\leq k$, there hold
\begin{align*}
Ae_j^{\delta}-\xi=\sum_{i=1}^n \big((a_{i},e^\delta_j)-\xi_i\big)b_{i} \quad \mbox{and}\quad \Pi_{J_{[j+1,k],0}^c}(B)A^t = V\Pi_{J_{[j+1,k],0}^c}(n^{-1}\Sigma^t \Sigma) \Sigma^t.
\end{align*}
Then the claim \eqref{eqn:Axi-N} follows from these two identities and column orthonormality of $V$ as
\begin{align*}
&\|\Pi_{J_{[j+1,k],0}^c}(B)A^t(A e_j^\delta-\xi)\|^2 
= 
\Big\|V\sum_{i=1}^n\Pi_{J_{[j+1,k],0}^c}(n^{-1}\Sigma^t \Sigma) \Sigma^t   \big((a_{i},e^\delta_j)-\xi_i\big)b_{i}\Big\|^2\\
=&\sum_{i=1}^n\|V\Pi_{J_{[j+1,k],0}^c}(n^{-1}\Sigma^t \Sigma) \Sigma^t   \big((a_{i},e^\delta_j)-\xi_i\big)b_{i}\|^2
=\sum_{i=1}^n\|\Pi_{J_{[j+1,k],0}^c}(B)A^t \big((a_{i},e^\delta_j)-\xi_i\big)b_{i}\|^2.
\end{align*}
Thus, by the bias-variance decomposition and the definitions of the notation $\bar A$
and $\bar\xi$ etc, we have, for any $j=1,\cdots,k,$
\begin{align*}
 &\E[\|\Pi_{J_{[j+1,k],0}^c}(B)A^t N_j^\delta\|^2|{ \mathcal{F}_{j}}]
=\E[\|\Pi_{J_{[j+1,k],0}^c}(B)A^t \tilde{N}^\delta_j\|^2|{ \mathcal{F}_{j}}]-\|\Pi_{J_{[j+1,k],0}^c}(B)A^t \E[\tilde{N}^\delta_j|{ \mathcal{F}_{j}}]\|^2\\
=&\frac1n \sum_{i=1}^n\|\Pi_{J_{[j+1,k],0}^c}(B)A^t \big((a_{i},e^\delta_j)-\xi_i\big)b_{i} \|^2-\|\Pi_{J_{[j+1,k],0}^c}(B)\frac{A^t }{n}(A e^\delta_j-\xi)\|^2\\
=&(n-1)\|\Pi_{J_{[j+1,k],0}^c}(B)(B e_j^\delta-\bar A^t\bar\xi)\|^2.
\end{align*}
By the Cauchy-Schwarz inequality, the identity
$\|\Pi_{J_{[j+1,k],0}^c}(B)\bar A^t\|^2=\|\Pi_{J_{[j+1,k],0}^c}(B)B^\frac12\|^2,$
and the triangle inequality, we deduce from \eqref{eqn:stco_0} that
\begin{align*}
&\E[\|x_{k+1}^\delta-\E[x_{k+1}^\delta]\|^2]
=\sum_{j=1}^{k}\eta_j^2\E\big[\E[\|\Pi_{J_{[j+1,k],0}^c}(B)A^t N_j^\delta\|^2|{ \mathcal{F}_{j}}]\big]\\
=&(n-1)\sum_{j=1}^{k}\eta_j^2\E[\|\Pi_{J_{[j+1,k],0}^c}(B)(B e_j^\delta-\bar A^t\bar\xi)\|^2]\\
\leq&2(n-1)\sum_{j=1}^{k}\eta_j^2\Big(\E[\|\Pi_{J_{[j+1,k],0}^c}(B)B e_j^\delta\|^2]+\|\Pi_{J_{[j+1,k],0}^c}(B)\bar A^t\bar\xi\|^2\Big)\\
\leq&2(n-1)\sum_{j=1}^{k}\eta_j^2\E[\|\Pi_{J_{[j+1,k],0}^c}(B)B e_j^\delta\|^2]+2(n-1){\bar\delta}^2\sum_{j=1}^{k}\eta_j^2\|\Pi_{J_{[j+1,k],0}^c}(B)B^\frac12\|^2.
\end{align*}
By the definitions of ${\rm I^\delta_{0,1}}$, ${\rm I^\delta_{0,2}}$ and $({\rm I}^\delta_0)^c$,
we have
\begin{align*}
\E[\|e_{k+1}^\delta\|^2] \leq {\rm I^\delta_{0,1}}+{\rm I^\delta_{0,2}}+({\rm I}^\delta_0)^c.
\end{align*}
Next further expanding $\{e^\delta_j\}_{j=2}^k$ in the expression of $({\rm I}^\delta_0)^c$ using \eqref{eq:sgditer} gives
\begin{align*}
 &({\rm I}_0^\delta)^c =2(n-1)\eta_1^2\|\Pi_{J_{[2,k],0}^c}(B)B e_1^\delta\|^2\\
&+2(n-1)\sum_{j_1=2}^{k}\eta_{j_1}^2\E\Big[\Big\|\Pi_{J_{[j_1+1,k],0}^c}(B)B \Big(\Pi_{J_{[1,j_1-1],0}^c}(B)e_1^\delta+\sum_{{j_2}=1}^{{j_1}-1}\big(\Pi_{J_{[j_2+1,j_1-1],0}^c}(B)\eta_{j_2} H_{j_2}\big)\Big)\Big\|^2\Big].
\end{align*}
Now using the definition of $H_j$ in \eqref{eqn:Hj}, we obtain
\begin{align*}
\sum_{{j_2}=1}^{{j_1}-1}\big(\Pi_{J_{[j_2+1,j_1-1],0}^c}(B)\eta_{j_2} H_{j_2}\big)
=&\sum_{{j_2}=1}^{{j_1}-1}\big(\eta_{j_2}\Pi_{J_{[j_2+1,j_1-1],0}^c}(B) (B-a_{i_{j_2}}a_{i_{j_2}}^t) e_{j_2}^\delta\big)\\
 &+\sum_{{j_2}=1}^{{j_1}-1}\big(\eta_{j_2}\Pi_{J_{[j_2+1,j_1-1],0}^c}(B) \xi_{i_{j_2}}
a_{i_{j_2}}\big).
\end{align*}
Thus, we can further bound $({\rm I}_0^\delta)^c$ by
\begin{align*}
({\rm I}_0^\delta)^c\leq&2(n-1)\eta_1^2\|\Pi_{J_{[2,k],0}^c}(B)B e_1^\delta\|^2\\
  &+4(n-1)\sum_{j_1=2}^{k}\eta_{j_1}^2\E\Big[\Big\|\Pi_{J_{[j_1+1,k],0}^c}(B)B\sum_{{j_2}=1}^{{j_1}-1}\big(\eta_{j_2}\Pi_{J_{[j_2+1,j_1-1],0}^c}(B) \xi_{i_{j_2}}
a_{i_{j_2}}\big)\Big\|^2\Big]\\
&+4(n-1)\sum_{j_1=2}^{k}\eta_{j_1}^2\E\Big[\Big\|\Pi_{J_{[j_1+1,k],0}^c}(B)B \Big(\Pi_{J_{[1,j_1-1],0}^c}(B)e_1^\delta\\
 &\quad +\sum_{{j_2}=1}^{{j_1}-1}\big(\eta_{j_2}\Pi_{J_{[j_2+1,j_1-1],0}^c}(B) (B-a_{i_{j_2}}a_{i_{j_2}}^t) e_{j_2}^\delta\big)\Big)\Big\|^2\Big]\\
\leq&2(n-1)\eta_1^2\|\Pi_{J_{[2,k],0}^c}(B)B e_1^\delta\|^2
+4(n-1)\sum_{j_1=2}^{k}\eta_{j_1}^2\underbrace{\E[\|\sum_{{j_2}=1}^{{j_1}-1}\big(\eta_{j_2}\Pi_{J_{[j_2+1,k],1}^c}(B)B \xi_{i_{j_2}}
a_{i_{j_2}}\big)\|^2]}_{\rm II_1}\\
&+4(n-1)\sum_{j_1=2}^{k}\eta_{j_1}^2\underbrace{\E[\|\Pi_{J_{[1,k],1}^c}(B) B e_1^\delta+\sum_{{j_2}=1}^{{j_1}-1}\big(\eta_{j_2}\Pi_{J_{[j_2+1,k],1}^c}(B)B (B-a_{i_{j_2}}a_{i_{j_2}}^t) e_{j_2}^\delta\big)\|^2]}_{\rm II_2}.
\end{align*}
Next we simplify the two terms ${\rm II}_1$ and ${\rm II}_2$. Under Assumption \ref{ass:stepsize}(iii), direct computation gives, for any $j_1=2,\cdots,k$ and $j,j'=1,\cdots,j_1-1$,
\begin{align}\label{eqn:Axi-exp}
&\E[\langle \Pi_{J_{[j'+1,k],1}^c}(B)B\xi_{i_{j'}}a_{i_{j'}}, \Pi_{J_{[j+1,k],1}^c}(B)B\xi_{i_j}a_{i_j}\rangle]\nonumber\\
=&\left\{\begin{aligned}
  n\langle \Pi_{J_{[j'+1,k],1}^c}(B)B\bar A^t\bar\xi,\Pi_{J_{[j+1,k],1}^c}(B)B \bar A^t\bar\xi\rangle,&  \quad j'=j,\\
 \langle \Pi_{J_{[j'+1,k],1}^c}(B)B\bar A^t\bar\xi,\Pi_{J_{[j+1,k],1}^c}(B)B \bar A^t\bar\xi\rangle, & \quad j'\neq j.
\end{aligned}\right.
\end{align}
Indeed, the case $j'\neq j$ follows directly. Meanwhile, under Assumption \ref{ass:stepsize}(iii), we have
$$\Pi_{J_{[j+1,k],1}^c}(B)BA^t\xi = V\Pi_{J_{[j+1,k],1}^c}(n^{-1}\Sigma^t\Sigma)(n^{-1}\Sigma^t\Sigma)\Sigma^t\sum_{i=1}^n\xi_{i}b_{i},$$ and  $V\Sigma^t \xi_i b_i= A^t\xi_i b_i =  a_i\xi_i$.
This and the column orthonormality of the matrix $V$ imply{
\begin{align*}
  &n\|\Pi_{J_{[j+1,k],1}^c}(B)B \bar{A}^{t}\bar{\xi}\|^2  =n^{-1}\Big\| V\Pi_{J_{[j+1,k],1}^c}(n^{-1}\Sigma^t\Sigma)(n^{-1}\Sigma^t\Sigma)\Sigma^t\sum_{i=1}^n\xi_{i}b_{i}\Big\|^2\\
  =&n^{-1}\sum_{i=1}^n\|V\Pi_{J_{[j+1,k],1}^c}(n^{-1}\Sigma^t\Sigma)(n^{-1}\Sigma^t\Sigma)\Sigma^t \xi_{i}b_i\|^2
  =n^{-1}\sum_{i=1}^n\|\Pi_{J_{[j+1,k],1}^c}(B)B a_i\xi_{i}\|^2\\
  =&\E[\|\Pi_{J_{[j+1,k],1}^c}(B)B\xi_{i_{j}}a_{i_{j}}\|^2].
\end{align*}}
This and the bias-variance decomposition imply that the term ${\rm II}_1$ can be simplified to
\begin{align*}
{\rm II}_1=&\Big\|\sum_{{j_2}=1}^{{j_1}-1}\eta_{j_2}\Pi_{J_{[j_2+1,k],1}^c}(B)B \bar A^t\bar\xi\Big\|^2+(n-1)\sum_{{j_2}=1}^{{j_1}-1}\eta_{j_2}^2\|\Pi_{J_{[j_2+1,k],1}^c}(B)B \bar A^t\bar \xi\|^2\\
\leq&{\bar\delta}^2\Big\|\sum_{{j_2}=1}^{{j_1}-1}\eta_{j_2}\Pi_{J_{[j_2+1,k],1}^c}(B)B^{\frac{3}{2}}\Big\|^2+(n-1){\bar\delta}^2\sum_{{j_2}=1}^{{j_1}-1}\eta_{j_2}^2\|\Pi_{J_{[j_2+1,k],1}^c}(B)B^{\frac32}\|^2.
\end{align*}
Further, by the measurability of $e_j^\delta$ with respect to {$\mathcal{F}_{j}$}, we have
\begin{align}\label{eqn:noise-ind1}
\E[\langle e_1^\delta, (B-a_{i_j}a_{i_j}^t)e_j^\delta\rangle]=&\langle e_1^\delta, \E[\E[(B-a_{i_j}a_{i_j}^t)e_j^\delta|{\mathcal{F}_{j}}]]\rangle=0, \quad \forall j,
\end{align}
since $e_1^\delta$ is deterministic, and similarly,
\begin{equation}\label{eqn:noise-ind2}
\begin{aligned}
 &\E[\langle (B-a_{i_{j'}}a_{i_{j'}}^t)e_{j'}^\delta, (B-a_{i_j}a_{i_j}^t)e_j^\delta\rangle]\\
=&\E[\langle (B-a_{i_{j'}}a_{i_{j'}}^t)e_{j'}^\delta, \E[(B-a_{i_j}a_{i_j}^t)e_j^\delta|{\mathcal{F}_{j}}]\rangle]=0, \quad \forall j'<j.
\end{aligned}
\end{equation}
Consequently, there holds
\begin{align*}
{\rm II}_2
=&\|\Pi_{J_{[1,k],1}^c}(B) B e_1^\delta\|^2+\sum_{{j_2}=1}^{{j_1}-1}\eta_{j_2}^2\E[\|\Pi_{J_{[j_2+1,k],1}^c}(B)B (B-a_{i_{j_2}}a_{i_{j_2}}^t) e_{j_2}^\delta\|^2].
\end{align*}
Combining these estimates with the definitions of the quantities ${\rm I}_{1,1}^\delta$, ${\rm I}_{1,2}^\delta$ and $({\rm I}_{1}^\delta)^c$ gives
\begin{align*}
&({\rm  I}_0^\delta)^c
\leq2(n-1)\eta_1^2\|\Pi_{J_{[2,k],0}^c}(B)B e_1^\delta\|^2\\
&+4(n-1){\bar\delta}^2\sum_{j_1=2}^{k}\eta_{j_1}^2\Big(\Big\|\sum_{{j_2}=1}^{{j_1}-1}\eta_{j_2}\Pi_{J_{[j_2+1,k],1}^c}(B)B^{\frac{3}{2}}\Big\|^2+(n-1)\sum_{{j_2}=1}^{{j_1}-1}\eta_{j_2}^2\|\Pi_{J_{[j_2+1,k],1}^c}(B)B^{\frac32}\|^2\Big)\\
&+4(n-1)\sum_{j_1=2}^{k}\eta_{j_1}^2\Big(\|\Pi_{J_{[1,k],1}^c}(B) B e_1^\delta\|^2+\sum_{{j_2}=1}^{{j_1}-1}\eta_{j_2}^2\E[\|\Pi_{J_{[j_2+1,k],1}^c}(B)B (B-a_{i_{j_2}}a_{i_{j_2}}^t) e_{j_2}^\delta\|^2]\Big)\\
\leq&4(n-1)\sum_{j_1=1}^{k}\eta_{j_1}^2\|\Pi_{J_{[1,k],1}^c}(B) B e_1^\delta\|^2\\
&+4(n-1){\bar\delta}^2\sum_{j_1=2}^{k}\eta_{j_1}^2\Big(\Big\|\sum_{{j_2}=1}^{{j_1}-1}\eta_{j_2}\Pi_{J_{[j_2+1,k],1}^c}(B)B^{\frac{3}{2}}\Big\|^2+(n-1)\sum_{{j_2}=1}^{{j_1}-1}\eta_{j_2}^2\|\Pi_{J_{[j_2+1,k],1}^c}(B)B^{\frac32}\|^2\Big)\\
&+4(n-1)\sum_{j_1=2}^{k}\eta_{j_1}^2\Big(\sum_{{j_2}=1}^{{j_1}-1}\eta_{j_2}^2\E[\|\Pi_{J_{[j_2+1,k],1}^c}(B)B (B-a_{i_{j_2}}a_{i_{j_2}}^t) e_{j_2}^\delta\|^2]\Big)\\
=&{\rm I^\delta_{1,1}}+{\rm I^\delta_{1,2}}+({\rm I}^\delta_1)^c.
\end{align*}
Similar to the analysis of $({\rm I}^\delta_0)^c$, by repeating the argument, we obtain
\begin{align*}
({\rm I}^\delta_1)^c
=&4(n-1)^2\sum_{j_1=2}^{k}\sum_{{j_2}=1}^{{j_1}-1}\eta_{j_1}^2\eta_{j_2}^2\E[\|\Pi_{J_{[j_2+1,k],1}^c}(B)B^2 e_{j_2}^\delta\|^2].
\end{align*}
In general, we can derive
\begin{equation*}
({\rm I}_\ell^\delta)^c=2^{\ell+1}(n-1)^{\ell} \sum_{J_{\ell+1}\in\mathcal{J}_{[1,k],\ell+1}}\prod_{t=1}^{\ell+1}\eta_{j_t}^2\E[\|\Pi_{J^c_{[j_{\ell+1}+1,k],\ell}}(B)B^{\ell}(B-a_{i_{j_{\ell+1}}}a_{i_{j_{\ell+1}}}^t) e_{j_{\ell+1}}^\delta\|^2].
\end{equation*}
Then repeating the preceding argument, and noting the relation $e_1^\delta=e_1$ complete the proof.
\end{proof}

\begin{remark}
In Theorem \ref{thm:decomp}, Assumption \ref{ass:stepsize}(iii) plays a central role in the refined error
decomposition, at two places, i.e., \eqref{eqn:Axi-N} and \eqref{eqn:Axi-exp}.
Intuitively, the condition essentially assumes low correlation between the rows of the matrix $A$,
in analogy to the mutual coherence condition in compressed sensing \cite{Donoho:2003}.
The numerical experiments in Section \ref{sec:numer} indicate that SGD performs
comparably with or without this assumption.
\end{remark}

\begin{remark}\label{rmk:cond}
It is instructive to see the obstruction in extending the argument of Theorem \ref{thm:decomp} to a general matrix $A$
with exact data (i.e., $\xi=0$), in the absence of Assumption \ref{ass:stepsize}(iii). Let the singular value decomposition of $A$ be
$A=U\Sigma V^t$, with $\Sigma\in\mathbb{R}^{n\times m}$ being diagonal with
positive diagonal entries $\{\sigma_i\}_{i=1}^r$ (with $r\leq \min(m,n)$ being the rank, ordered nonincreasingly)
and $U=[u_1,\cdots,u_n]{\in\mathbb{R}^{n\times n}}$ and $V=[v_1,\cdots,v_m]{\in\mathbb{R}^{m\times m}}$ being column
orthonormal. Now consider the right-hand side and left-hand side, denoted by $\rm
RHS$ and $\rm LHS$, respectively, of the crucial identity \eqref{eqn:Axi-N} with
a random index set $J$ and a random vector $e\in\mathbb{R}^m$
(by suppressing the subscripts). In view of the identity $a_i^t =b_i^t A$, we have
\begin{align*}
{\rm LHS}=&\|V\Pi_{J}(n^{-1}\Sigma^t \Sigma)\Sigma^t U^t Ae\|^2=\|D U^t A e\|^2=\sum_{j=1}^n (d_j u_j^t(Ae))^2=\sum_{j=1}^r d_j^2(u_j^t(Ae))^2,\\ 
{\rm RHS}
=&\sum_{i=1}^n\|V\Pi_{J}(n^{-1}\Sigma^t \Sigma)\Sigma^t U^t b_{i}b_{i}^t Ae\|^2=\sum_{i=1}^n\|D U^t b_{i}(Ae)_i\|^2
=\sum_{j=1}^r d_j^2\sum_{i=1}^n(u_{ji}(Ae)_i)^2,
\end{align*}
with the diagonal matrix $D$ given by $D=\Pi_{J}(n^{-1}\Sigma^t \Sigma)\Sigma^t:={\rm diag}(d_1,\cdots, d_n),$
with the first $r$ entries being strictly positive. Since the index set $J$ is arbitrary, the existence of
a constant $c$ (independent of $J$) such that ${\rm RHS}\leq c {\rm LHS}$ essentially requires
\begin{align*}
\sum_{i=1}^n(u_{ji}(Ae)_i)^2\leq c (u_j^t(Ae))^2,\quad j=1,\ldots,r . 
\end{align*}
Since $Ae=\sum_{
\ell=1}^{r}\sigma_\ell u_\ell v_\ell^t e$, the above inequality is equivalent to
\begin{align}\label{eqn:inequal-onecomp}
\sum_{i=1}^n(u_{ji}(Ae)_i)^2\leq c (\sigma_j v_j^t e)^2.
\end{align}
{When Assumption \ref{ass:stepsize} (iii) does not hold, there exist some $j\leq r$ and two nonzero elements $u_{j{i_1}}, u_{j{i_2}}$.
Now we take any $ e\in\mathbb{R}^m$ such that $v_j^t e=0$ and $(Ae)_{i_1}\neq 0$ or $(Ae)_{i_2}\neq 0$.}
Then the left hand side of \eqref{eqn:inequal-onecomp} is strictly positive,
and the right hand side vanishes. Thus, there is no constant $c$ such that this inequality holds. This shows the delicacy
of the analysis for a general matrix $A$. Nonetheless, the numerical experiments in
Section \ref{sec:numer} indicate that the saturation phenomenon actually also does
not occur for a general matrix, so long as the stepsize $c_0$ is sufficiently small.
Thus, we believe that the restriction is due to the limitation of the proof
technique. Note that the convergence analysis in Section \ref{sec:conv} remain valid
provided that relaxed versions of the identities \eqref{eqn:Axi-N} and \eqref{eqn:Axi-exp} hold
but with different constants in the final estimate.
\end{remark}

The proof of Theorem \ref{thm:decomp} also gives the following error decomposition
for exact data $y^\dag$.
\begin{corollary}
For any $0\leq\ell<k$, the following error decomposition holds
\begin{align}\label{eqn:recu-exact}
 \E[\|e_{k+1}\|^2] \leq \sum_{i=0}^\ell {\rm I}_{i}+{ ({\rm I}_\ell)^c},
\end{align}
where the terms ${\rm I}_{i}$, {$i=0,1,\cdots,\ell$}, are defined by
\begin{align*}
{\rm I_{0}}&=\|\Pi_{J^c_{[1,k],0}}(B)e_1\|^2,\\
{\rm I}_{i}&=(n-1)^i \sum_{J_i\in\mathcal{J}_{[1,k],i}}\prod_{t=1}^{i}\eta_{j_t}^2\|\Pi_{J_{[1,k],i}^c}(B) B^i e_1\|^2, \quad \forall 1\leq i\leq \ell,\\
({\rm I}_\ell)^c&=(n-1)^{\ell+1} \sum_{J_{\ell+1}\in\mathcal{J}_{[1,k],\ell+1}}\prod_{t=1}^{\ell+1}\eta_{j_t}^2\E[\|\Pi_{J^c_{[j_{\ell+1}+1,k],\ell}}(B)B^{\ell+1} e_{j_{\ell+1}}\|^2].
\end{align*}
\end{corollary}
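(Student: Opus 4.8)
The plan is to obtain the corollary as the exact-data specialization of Theorem \ref{thm:decomp}, by setting $\xi=0$ (equivalently $\bar\delta=0$) throughout its proof, so that $e_k^\delta=e_k$ and in particular $e_1^\delta=e_1$. With $\xi=0$ the iteration noise in \eqref{eqn:Hj} collapses to $H_j=(B-a_{i_j}a_{i_j}^t)e_j$, and the recursion \eqref{eq:sgditer} reads $e_{k+1}=\Pi_{J^c_{[1,k],0}}(B)e_1+\sum_{j=1}^k\eta_j\Pi_{J^c_{[j+1,k],0}}(B)H_j$. The key structural observation, which I would verify up front, is that every inequality in the proof of Theorem \ref{thm:decomp} arises from a triangle-inequality split $\|a+b\|^2\le 2\|a\|^2+2\|b\|^2$ that separates a \emph{signal} contribution from a \emph{noise} contribution; once $\xi=0$ the noise contribution vanishes and each such split degenerates to the identity $\|a+0\|^2=\|a\|^2$, so all the accompanying factors $2$ disappear.

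First I would apply the bias--variance decomposition $\E[\|e_{k+1}\|^2]=\|\E[e_{k+1}]\|^2+\E[\|e_{k+1}-\E[e_{k+1}]\|^2]$. In the noisy proof the bias term was further split by the triangle inequality into an approximation part and a propagation part (producing the factor $2$ in ${\rm I}_{0,1}^\delta$ and the term ${\rm I}_{0,2}^\delta$); here $e_{k+1}^\delta=e_{k+1}$, so the bias term is exactly $\|\Pi_{J^c_{[1,k],0}}(B)e_1\|^2={\rm I}_0$, with no factor $2$ and no propagation term. For the variance term, the conditional computation $\E[\|\Pi_{J_{[j+1,k],0}^c}(B)A^tN_j\|^2\mid\mathcal F_j]=(n-1)\|\Pi_{J_{[j+1,k],0}^c}(B)(Be_j-\bar A^t\bar\xi)\|^2$ carries over verbatim, but now $\bar A^t\bar\xi=0$, so there is no $\|\,\cdot-\bar A^t\bar\xi\|^2$ to break by Cauchy--Schwarz; this removes the last factor $2$ and yields $({\rm I}_0)^c=(n-1)\sum_{j=1}^k\eta_j^2\,\E[\|\Pi_{J^c_{[j+1,k],0}}(B)Be_j\|^2]$.

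Next I would run the same induction as in Theorem \ref{thm:decomp}: expand each $e_j$ (for $j\ge2$) via \eqref{eq:sgditer}, use that $H_j$ has conditional mean zero together with the orthogonality relations \eqref{eqn:noise-ind1}--\eqref{eqn:noise-ind2}, which are insensitive to $\xi$ and therefore apply directly to $e_j$, to annihilate all cross terms, and invoke Assumption \ref{ass:stepsize}(iii) through the identity \eqref{eqn:Axi-exp} (and its evident extension with $\Pi_{J^c}(B)$ replaced by $\Pi_{J^c}(B)B^{\ell}$, valid by the same $A=\Sigma V^t$ and column-orthonormality argument) to convert $\E[\|\Pi_{J^c}(B)B^{\ell}(B-a_{i}a_{i}^t)e\|^2\mid\cdot]$ into $(n-1)\|\Pi_{J^c}(B)B^{\ell+1}e\|^2$. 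Because with $\xi=0$ there is no noise component to peel off at any level, each unfolding is an exact bias--variance split rather than a triangle-inequality bound, so the factors $2^{i+1}$ of the noisy decomposition collapse and one obtains ${\rm I}_i=(n-1)^i\sum_{J_i\in\mathcal J_{[1,k],i}}\prod_{t}\eta_{j_t}^2\|\Pi_{J_{[1,k],i}^c}(B)B^ie_1\|^2$ together with the truncated remainder $({\rm I}_\ell)^c$, the index-set manipulations being handled exactly as before via Lemma \ref{lem:sum_J}.

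I do not expect a genuine obstacle here; the entire content is bookkeeping of constants. The only point requiring care is to confirm rigorously that every inequality in the proof of Theorem \ref{thm:decomp} is traceable to a signal/noise separation, so that suppressing $\xi$ legitimately deletes all factors $2$, while the purely algebraic ingredients---the bias--variance identities, the orthogonality relations \eqref{eqn:noise-ind1}--\eqref{eqn:noise-ind2}, and the Assumption \ref{ass:stepsize}(iii) identity---survive unchanged. In fact the resulting chain is a sequence of equalities, so the stated ``$\le$'' in \eqref{eqn:recu-exact} is a convenient relaxation; matching the remainder $({\rm I}_\ell)^c$ after $\ell$ unfoldings then follows by the very induction already performed for Theorem \ref{thm:decomp}.
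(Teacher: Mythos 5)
Your proposal is correct and takes essentially the same route as the paper, whose entire proof is the single remark that the argument of Theorem \ref{thm:decomp} specializes to exact data; your accounting of why each factor $2^{i+1}$ collapses to $1$ when $\xi=0$ (every inequality in that proof is a signal/noise triangle split, so the exact-data chain is in fact a chain of equalities) fills in precisely what the paper leaves implicit. The only cosmetic slip is your citation of \eqref{eqn:Axi-exp}, which concerns the noise vectors $\xi_{i_j}a_{i_j}$ and is vacuous for $\xi=0$; the identity actually needed is the extension of \eqref{eqn:Axi-N} with $\Pi_{J^c}(B)$ replaced by $\Pi_{J^c}(B)B^{\ell}$, which is exactly the conditional-variance computation you describe, so this does not affect the argument.
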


In view of Theorem \ref{thm:decomp}, the error $\E[\|e_{k+1}^\delta\|^2]$ can be decomposed into three components: approximation error $\sum_{i=0}^\ell {\rm I}_{i,1}^\delta$,
propagation error $\sum_{i=0}^\ell {\rm I}_{i,2}^\delta$, and stochastic error $({\rm I}_\ell^\delta)^c$.
Here we have slightly abused the terminology for approximation and propagation errors, since the approximation error only depends on the
regularity of the exact solution $x^\dag$ (indicated by the source condition \eqref{eqn:source} in Assumption \ref{ass:stepsize}(ii)), whereas the
propagation error is determined by the noise level. With the choice $\ell=0$, the decomposition recovers that in \cite{JinLu:2019,JinZhouZou:2020}.
When compared with the classical error decomposition for the Landweber method, the summands for $\ell\geq 1$ arise from the stochasticity
of the iterates (due to the random row index at each iteration), so is the stochastic error $({\rm I}_\ell^\delta)^c$. This refined decomposition
is crucial to analyze the saturation phenomenon (under suitable conditions on the initial stepsize). Below we first
derive bounds on the first two terms in Propositions \ref{err:apx} and \ref{err:ppg}, and then we prove optimal
convergence rates of SGD by mathematical induction in Section \ref{ssec:conv}.

\section{Convergence rate analysis}\label{sec:conv}

In this section, we present the convergence rate analysis, and establish Theorem \ref{thm:main}. The proof proceeds by first analyzing
the approximation error and propagation error in Sections \ref{ssec:approx} and \ref{ssec:propag}, respectively,
and then bound the mean squared error $\E[\|e_k^\delta\|^2]$ via mathematical induction. We also give an alternative (simplified)
convergence analysis for the case $\alpha=0$ in Section \ref{ssec:al=0}.

\subsection{Bound on the approximation error}\label{ssec:approx}

We begin with bounding the approximation error  $\sum_{i=0}^\ell {\rm I}_{i,1}^\delta$ for any fixed $\ell\geq \nu$.
The summand ${\rm I}_{0,1}^\delta$ is the usual approximation error (for Landweber method), and the
remaining terms arise from the random row index. Thus, the approximation error decays
at the optimal rate.
\begin{proposition}\label{err:apx}
Let Assumption \ref{ass:stepsize} be fulfilled, and
\begin{equation*}
   h_0(k)=2(\nu+\ell)^2 n \phi(2\alpha)k^{-2(1-\alpha)+\max(1-2\alpha,0)}.
\end{equation*}
Then for any integer $\ell\geq \nu$, $\alpha\in[0,1)$ and $k\geq2\ell$, there holds
\begin{align*}
\sum_{i=0}^\ell{\rm I}_{i,1}^\delta\leq c_{\nu,\ell,\alpha,n} c_0^{-2\nu} k^{-2\nu(1-\alpha)} \|w\|^2,
\end{align*}
with the constant
\begin{align*}
c_{\nu,\ell,\alpha,n}=\left\{\begin{array}{ll}
    4(\nu+\ell)^{2\nu}, &\mbox{if } h_0(k)\leq\frac12,\\
    2(\nu+\ell)^{2\nu}\sum_{i=0}^\ell(h_0(2\ell))^i, & otherwise.
  \end{array}\right.
\end{align*}
\end{proposition}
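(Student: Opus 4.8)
The plan is to bound each summand ${\rm I}_{i,1}^\delta$ separately by a quantity of the form $2(\nu+\ell)^{2\nu}(h_0(k))^i\,c_0^{-2\nu}k^{-2\nu(1-\alpha)}\|w\|^2$, so that the whole sum collapses to a geometric-type series in $h_0(k)$ whose value is controlled in the two regimes $h_0(k)\le\frac12$ and $h_0(k)>\frac12$. First I would insert the source condition \eqref{eqn:source}, which gives $\|\Pi_{J_{[1,k],i}^c}(B)B^i e_1\|=\|\Pi_{J_{[1,k],i}^c}(B)B^{i+\nu}w\|\le\|\Pi_{J_{[1,k],i}^c}(B)B^{i+\nu}\|\,\|w\|$. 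For the base term $i=0$, Lemma \ref{lem:kernel} with $s=\nu$ immediately yields ${\rm I}_{0,1}^\delta\le 2\nu^{2\nu}(ec_0)^{-2\nu}k^{-2\nu(1-\alpha)}\|w\|^2$, already of the asserted order.

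For $1\le i\le\ell$ I would combine three ingredients: the kernel bound of Lemma \ref{lem:kernel} (with $k'=1$, the parameter $\ell$ there replaced by $i$, and $s=i+\nu$), which contributes $(i+\nu)^{2(i+\nu)}(ec_0)^{-2(i+\nu)}(k-i)^{-2(i+\nu)}k^{2\alpha(i+\nu)}$; the stepsize identity $\prod_{t=1}^{i}\eta_{j_t}^2=c_0^{2i}\prod_{t=1}^{i}j_t^{-2\alpha}$; and the summation estimate of Lemma \ref{lem:kernel2}(i), namely $\sum_{J_i}\prod_t j_t^{-2\alpha}\le\phi(2\alpha)^i(k^{\max(1-2\alpha,0)})^i$, where the case $i=1$ follows directly from \eqref{eqn:basic-bdd}. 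A pleasant check here is that the powers of $c_0$ combine as $c_0^{2i}\cdot c_0^{-2(i+\nu)}=c_0^{-2\nu}$, matching the target exponent exactly.

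The crux is the bookkeeping of the powers of $k$. Using $k\ge2\ell\ge2i$ I would invoke $k-i\ge k/2$, so that $(k-i)^{-2(i+\nu)}\le 2^{2(i+\nu)}k^{-2(i+\nu)}$, and then verify that the $k$-exponent of ${\rm I}_{i,1}^\delta$ relative to $k^{-2\nu(1-\alpha)}$ collapses precisely to $i\big[\max(1-2\alpha,0)-2(1-\alpha)\big]$, i.e.\ $i$ times the exponent hidden inside $h_0(k)$. After bounding $(i+\nu)^2\le(\nu+\ell)^2$ and $(i+\nu)^{2\nu}\le(\nu+\ell)^{2\nu}$, and noting $(4/e^2)^\nu\le1$ and $(n-1)/n\le1$ to absorb the leftover numerical constants, the uniform per-term bound $T_i:={\rm I}_{i,1}^\delta\big/\big(c_0^{-2\nu}k^{-2\nu(1-\alpha)}\|w\|^2\big)\le 2(\nu+\ell)^{2\nu}(h_0(k))^i$ emerges for every $0\le i\le\ell$ (the case $i=0$ included).

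Finally I would sum over $i$. If $h_0(k)\le\frac12$, the geometric series gives $\sum_{i=0}^\ell(h_0(k))^i\le2$, producing the constant $4(\nu+\ell)^{2\nu}$; otherwise the monotonicity of $h_0$ together with $k\ge2\ell$ yields $h_0(k)\le h_0(2\ell)$, hence $\sum_{i=0}^\ell(h_0(k))^i\le\sum_{i=0}^\ell(h_0(2\ell))^i$, giving the second constant. The main obstacle I anticipate is exactly the exponent collapse together with the careful tracking of the numerical factors---the $2^{i+1}(n-1)^i$ prefactors of ${\rm I}_{i,1}^\delta$, the extra $2^{2(i+\nu)}$ from $k-i\ge k/2$, and $n-1$ versus $n$---that is needed to reach the clean bound $T_i\le2(\nu+\ell)^{2\nu}(h_0(k))^i$; once that uniform estimate is in hand, the summation is routine.
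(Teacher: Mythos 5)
Your proposal is correct and follows essentially the same route as the paper's own proof: the source condition plus Lemma \ref{lem:kernel} with $s=\nu+i$, the factorization $\prod_t\eta_{j_t}^2=c_0^{2i}\prod_t j_t^{-2\alpha}$, Lemma \ref{lem:kernel2}(i), the bound $k-i\geq k/2$, a uniform per-term estimate of the form $2(\nu+\ell)^{2\nu}h_0(k)^i$, and the same two-case geometric summation using the monotonicity of $h_0$. Your explicit handling of the $i=1$ case of Lemma \ref{lem:kernel2}(i) via \eqref{eqn:basic-bdd} is a small point of extra care (the lemma is stated for $i\geq 2$), but otherwise the argument matches the paper's.
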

\begin{proof}
In view of the source condition \eqref{eqn:source} and Lemma \ref{lem:kernel}, we have
\begin{align*}
{\rm I}_{0,1}^\delta&=2\|\Pi_{J^c_{[1,k],0}}(B)e_1\|^2\leq2\|\Pi_{J^c_{[1,k],0}}(B)B^\nu\|^2\|w\|^2
\leq 2\nu^{2\nu} (ec_0)^{-2\nu} k^{-2\nu(1-\alpha)}\|w\|^2.
\end{align*}
Similarly, for any $1\leq i\leq \ell$,
\begin{align*}
  \prod_{t=1}^{i}\eta_{j_t}^2\|\Pi_{J_{[1,k],i}^c}(B) B^i e_1\|^2
&\leq \prod_{t=1}^{i}\eta_{j_t}^2\|\Pi_{J_{[1,k],i}^c}(B) B^{\nu+i}\|^2\|w\|^2\\
&\leq (\tfrac{\nu+i}{e})^{2(\nu+i)}c_0^{-2\nu}k^{2(\nu+i)\alpha}\|w\|^2\prod_{t=1}^{i}j_t^{-2\alpha}(k-i)^{-2(\nu+i)}.
\end{align*}
By the definition of ${\rm I}_{i,1}^\delta$, since $k\geq2\ell$, $k-i\geq \frac{k}{2}$, for $i=1,\ldots,\ell$, by Lemma \ref{lem:kernel2}(i),
\begin{align*}
{\rm I}_{i,1}^\delta
&\leq 2^{i+1}n^i (\tfrac{\nu+i}{e})^{2(\nu+i)}c_0^{-2\nu}k^{2(\nu+i)\alpha}\|w\|^2 \sum_{J_i\in\mathcal{J}_{[1,k],i}}\prod_{t=1}^{i}j_t^{-2\alpha}(k-i)^{-2(\nu+i)}\\
&\leq 2(2e^{-1})^{2\nu}(\nu+i)^{2\nu}c_0^{-2\nu} k^{-2\nu(1-\alpha)}\|w\|^2 \Big[8e^{-2}(\nu+i)^2 n \phi(2\alpha)k^{-2(1-\alpha)+\max(1-2\alpha,0)}\Big]^i.
\end{align*}
Clearly, the quantity in the square bracket is bounded by $h_0(k)$.
Next we treat the two cases $h_0(k)\leq \frac12$ and $h_0(k)>\frac12$ separately.
{ If $h_0(k)\leq\frac12$, we deduce}
\begin{align*}
\sum_{i=0}^\ell{\rm I}_{i,1}^\delta&\leq
2(\nu+\ell)^{2\nu}c_0^{-2\nu}k^{-2\nu(1-\alpha)}\|w\|^2
\sum_{i=0}^\ell h_0(k)^i\leq
c_{\nu,\ell,\alpha,n}c_0^{-2\nu}k^{-2\nu(1-\alpha)}\|w\|^2.
\end{align*}
Further, when $h_0(k)>\frac12$, since $k\geq2\ell$, we have $h_0(k)\leq h_0(2\ell)$, and thus  obtain
\begin{align*}
\sum_{i=0}^\ell{\rm I}_{i,1}^\delta&\leq2(\nu+\ell)^{2\nu}c_0^{-2\nu}k^{-2\nu(1-\alpha)}\|w\|^2
\sum_{i=0}^\ell h_0(2\ell)^i \leq{c}_{\nu,\ell,\alpha,n}c_0^{-2\nu}k^{-2\nu(1-\alpha)}\|w\|^2.
\end{align*}
Finally, combining the last two estimates completes the proof.
\end{proof}

\begin{remark}\label{rem:apx1}
For any $k$ satisfying $h_0(k)\leq \frac12$, the constant $c_{\nu,\ell,\alpha,n}$ is actually
independent of $\alpha$ and $n$. Further, if $k<2\ell$, then by setting $\ell$ to $0$, we obtain
\begin{equation*}
   {\rm I_{0,1}^\delta}\leq 2^{1-2\nu}\nu^{2\nu} c_0^{-2\nu} k^{-2\nu(1-\alpha)}\|w\|^2.
\end{equation*}
\end{remark}

\subsection{Bound on the propagation error}\label{ssec:propag}

Now we bound the propagation error $\sum_{i=0}^\ell {\rm I}_{i,2}^\delta$, which
arises from the presence of the data noise $\xi$. The summands for $\ell\geq 1$
arise from the stochasticity of the SGD iterates $x_k^\delta$. We bound
each summand ${\rm I}_{i,2}^\delta$, $i=0,\ldots,\ell$, separately,
equivalently the following two quantities for $k\geq 4i$:
\begin{align}
  {\rm I}(i,k) &: = \sum_{J_i\in\mathcal{J}_{[2,k],i}}\prod_{t=1}^{i}\eta_{j_t}^2\Big\|\sum_{j_{i+1}=1}^{j_i-1}\eta_{j_{i+1}}\Pi_{J_{[j_{i+1}+1,k],i}^c}(B) B^{i+\frac12}\Big\|^2,\label{eqn:ppg1}\\
   {\rm II}(i,k) &: = \sum_{J_i\in\mathcal{J}_{[2,k],i}}\prod_{t=1}^{i}\eta_{j_t}^2\sum_{j_{i+1}=1}^{j_i-1}\eta_{j_{i+1}}^2\|\Pi_{J_{[j_{i+1}+1,k],i}^c}(B) B^{i+\frac12}\|^2,\label{eqn:ppg2}
\end{align}
with the convention $\sum_{J_0\in\mathcal{J}_{[2,k],0}}\prod_{t=1}^{0}\eta_{j_t}^2=1$ and $j_0=k+1$.
The condition $k\geq 4i$ implies the following two elementary estimates:
\begin{align}
   k-j_{i+1}-i\geq\tfrac k4,&\quad j_{i+1}=1,2,\cdots,[\tfrac k2],\label{eqn:bdd-k-j1}\\
   k-j_{i+1}\leq (i+1)(k-j_{i+1}-i), &\quad j_{i+1}=[\tfrac{k}{2}]+1,\ldots, k-i-1.\label{eqn:bdd-k-j2}
\end{align}

First we bound ${\rm I}(i,k)$. The notation $[\cdot]$ denotes taking
the integral part of a real number.
\begin{lemma}\label{lem:ppg1}
Let ${\rm I}(i,k)$ be defined in \eqref{eqn:ppg1}, and Assumption \ref{ass:stepsize} be fulfilled. Then for any
fixed $i\in\mathbb{N}$ and $k\geq 4i$, the following estimate holds
\begin{align*}
 {\rm I}(i,k) \leq& 2\Big(2^{2\alpha-1}e^{-1}(2i+1)c_0 \big((2e^{-1})^2(2i+1)^2\phi(2\alpha)k^{-2(1-\alpha)+\max(1-2\alpha,0)}\big)^{i}\\
    &+ 25c_0^{\max(i,1)}\big(2^{2\alpha-1}e^{-1}(i+2)^2k^{-\alpha}\big)^i\Big)\phi(\alpha)^2k^{1-\alpha}.
\end{align*}
\end{lemma}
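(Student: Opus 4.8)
The plan is to bound the double summation in $\mathrm{I}(i,k)$ by first expanding the squared norm of the inner sum over $j_{i+1}$, then splitting the range of $j_{i+1}$ at $[k/2]$ into a "tail" and a "head" part, applying the kernel bound from Lemma \ref{lem:kernel} to each, and finally summing over the multi-index $J_i$ using Lemma \ref{lem:kernel2}(i). The elementary estimates \eqref{eqn:bdd-k-j1} and \eqref{eqn:bdd-k-j2} are provided precisely to handle the two ranges, which strongly signals that the split at $[k/2]$ is the intended route.

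Concretely, I would first open up
\[
\Big\|\sum_{j_{i+1}=1}^{j_i-1}\eta_{j_{i+1}}\Pi_{J_{[j_{i+1}+1,k],i}^c}(B)B^{i+\frac12}\Big\|
\leq \sum_{j_{i+1}=1}^{j_i-1}\eta_{j_{i+1}}\big\|\Pi_{J_{[j_{i+1}+1,k],i}^c}(B)B^{i+\frac12}\big\|
\]
by the triangle inequality, and apply Lemma \ref{lem:kernel} with $s=i+\tfrac12$ and $k'=j_{i+1}+1$, giving a factor $(i+\tfrac12)^{i+\frac12}(ec_0)^{-(i+\frac12)}(k-j_{i+1}-i)^{-(i+\frac12)}k^{\alpha(i+\frac12)}$. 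Writing $\eta_{j_{i+1}}=c_0 j_{i+1}^{-\alpha}$ pulls out a clean power of $c_0$. For the head range $j_{i+1}\le[k/2]$, I would use \eqref{eqn:bdd-k-j1} to replace $(k-j_{i+1}-i)^{-(i+\frac12)}$ by $(k/4)^{-(i+\frac12)}$ up to constants, so that the $j_{i+1}$-sum reduces to $\sum j_{i+1}^{-\alpha}\le \phi(\alpha)k^{\max(1-\alpha,0)}$. For the tail range $j_{i+1}>[k/2]$, I would use \eqref{eqn:bdd-k-j2} to convert the decaying factor into one involving $(k-j_{i+1})$, whose sum over the tail telescopes into a manageable power of $k$ against another $\phi(\alpha)$ factor; this is where the two distinct terms in the final bound (one scaling like $k^{-2(1-\alpha)+\max(1-2\alpha,0)}$, the other like $k^{-\alpha}$) originate.

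After squaring the inner bound (which accounts for the $\phi(\alpha)^2$ and the two cross-structured summands), the remaining object is $\sum_{J_i\in\mathcal{J}_{[2,k],i}}\prod_{t=1}^i\eta_{j_t}^2 = c_0^{2i}\sum_{J_i}\prod_{t=1}^i j_t^{-2\alpha}$, which Lemma \ref{lem:kernel2}(i) bounds by $c_0^{2i}\big(\phi(2\alpha)k^{\max(1-2\alpha,0)}\big)^i$. Collecting the $c_0$ powers, the $k$ powers, and the combinatorial constants $(2i+1)$, $(i+2)$ into the stated form is then bookkeeping; the $\max(i,1)$ exponent on $c_0$ and the $2^{2\alpha-1}$ prefactors arise from separately tracking the $i=0$ degenerate case (where the product over $J_0$ is empty) versus $i\ge1$.

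\textbf{The main obstacle} will be the careful accounting in the tail range $j_{i+1}>[k/2]$: there the naive bound $(k-j_{i+1}-i)^{-(i+\frac12)}$ does not directly sum, and one must invoke \eqref{eqn:bdd-k-j2} to trade it for $(i+1)^{i+\frac12}(k-j_{i+1})^{-(i+\frac12)}$ and then sum $\sum_m m^{-(i+\frac12)}$ over the shifted index $m=k-j_{i+1}$, which by \eqref{eqn:basic-bdd} with $s=i+\tfrac12>1$ yields an $O(1)$ constant rather than a $k$-power. Threading this through the outer square while keeping the $j_{i+1}^{-\alpha}$ weight (which must also be estimated on the tail, e.g.\ by $j_{i+1}^{-\alpha}\le (k/2)^{-\alpha}$ or bounded crudely) without losing the sharp $k$-exponent is the delicate part; the choice of where to place the $\phi(\alpha)$ versus $\phi(2\alpha)$ factors, and whether to bound $\eta_{j_{i+1}}^2$ inside or outside the square, is what distinguishes a clean $k^{1-\alpha}$ overall rate from a lossy one. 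Everything else is a routine though lengthy application of the three preparatory lemmas.
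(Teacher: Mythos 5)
Your skeleton (triangle inequality on the inner sum, Lemma \ref{lem:kernel}, splitting $j_{i+1}$ at $[\tfrac k2]$, Lemma \ref{lem:kernel2}) is indeed the paper's, and your head-range computation is sound: there the kernel bound $(k/4)^{-(i+\frac12)}$ is uniform in $J_i$ and $j_{i+1}$, so ``square, then multiply by $\sum_{J_i}\prod_t\eta_{j_t}^2\le c_0^{2i}(\phi(2\alpha)k^{\max(1-2\alpha,0)})^i$'' does reproduce the first term of the stated bound. The genuine gap is in the tail range, and it is fatal. You bound the whole inner $j_{i+1}$-sum by a quantity uniform in $J_i$, square it, and only afterwards sum over $J_i$ via Lemma \ref{lem:kernel2}(i). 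For tail indices this decouples two sums that must remain coupled: when $j_{i+1}>[\tfrac k2]$, the admissible multi-indices satisfy $J_i\subset[j_{i+1}+1,k]$, so there are at most $(k-j_{i+1})^i/i!$ of them (Lemma \ref{lem:kernel2}(ii)) and each carries weight $\prod_t j_t^{-2\alpha}\le(\tfrac k2)^{-2i\alpha}$ --- vastly smaller than the global bound $(\phi(2\alpha)k^{\max(1-2\alpha,0)})^i$ you invoke. Executing your plan with $s=i+\frac12$ gives, for $i\ge1$,
\begin{align*}
M_{\rm tail}&:=\sum_{j_{i+1}=[\frac k2]+1}^{k-i-1}\eta_{j_{i+1}}\|\Pi_{J^c_{[j_{i+1}+1,k],i}}(B)B^{i+\frac12}\|
\leq C(i)\,c_0^{\frac12-i}k^{\alpha(i-\frac12)},\quad C(i)=(i+\tfrac12)^{i+\frac12}e^{-(i+\frac12)}2^\alpha\phi(i+\tfrac12),
\end{align*}
and then $M_{\rm tail}^2\cdot c_0^{2i}(\phi(2\alpha)k^{\max(1-2\alpha,0)})^i = C(i)^2c_0k^{-\alpha}\big(\phi(2\alpha)k^{2\alpha+\max(1-2\alpha,0)}\big)^i$. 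For $\alpha\in[0,\frac12)$ the bracket equals $\phi(2\alpha)k$, so your tail contribution is of order $k^{i-\alpha}$, which for $i\ge1$ exceeds the claimed bound (each term of which is $O(k^{1-\alpha})$) by an unbounded power of $k$. The proposal therefore does not prove the lemma.

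Two ingredients of the paper's proof are missing. First, the order of summation is exchanged via \eqref{eqn:sum2} (equivalently, Minkowski's inequality for ${\rm I}(i,k)^{1/2}$ in the weighted $\ell^2$-space over $J_i$), so that each tail index $j_{i+1}$ retains its \emph{restricted} factor $\big(\sum_{J_i\in\mathcal{J}_{[j_{i+1}+1,k],i}}\prod_t\eta_{j_t}^2\big)^{1/2}\le c_0^i(\tfrac k2)^{-i\alpha}(k-j_{i+1})^{i/2}/i!^{1/2}$; it is this count factor $(k-j_{i+1})^{i/2}$ that \eqref{eqn:bdd-k-j2} is designed to trade against the kernel decay. (Your use of \eqref{eqn:bdd-k-j2} --- replacing $(k-j_{i+1}-i)^{-(i+\frac12)}$ by a power of $(k-j_{i+1})$ before summing --- is a no-op, since $\sum_{j_{i+1}}(k-j_{i+1}-i)^{-(i+\frac12)}$ is already $O(1)$ by reindexing when $i\ge1$, and your ``$O(1)$ by \eqref{eqn:basic-bdd} with $s=i+\frac12>1$'' fails outright for $i=0$.) Second, in the tail and in the degenerate term the paper takes $s=\frac i2+1$ rather than $i+\frac12$: with your choice the surviving powers after cancellation are $c_0^{1/2}$ and $k^{-\alpha/2}$ (squared: $c_0k^{-\alpha}$) \emph{independently of} $i$, so even with the correct coupling you could not produce the claimed $c_0^{\max(i,1)}\big(2^{2\alpha-1}e^{-1}(i+2)^2k^{-\alpha}\big)^i$ structure --- and that $i$-th-power structure is precisely what Proposition \ref{err:ppg} needs in order to absorb the factor $n^i$ under the condition $h_2(k)\le\frac12$. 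A minor further omission: the term $j_{i+1}=k-i$, for which $J^c_{[j_{i+1}+1,k],i}=\emptyset$ and Lemma \ref{lem:kernel} is inapplicable, must be split off and estimated separately, as the paper's ${\rm I}_0'$.
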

\begin{proof}
We abbreviate ${\rm I}(i,k)$ as ${\rm I}$. By triangle inequality and Lemma \ref{lem:kernel}, for any $s\in (0,i+\frac12]$
{and any $j_{i+1}\neq k-i$ (when $j_{i+1}=k-i$, $j_i=k-i+1,\cdots,j_1=k$)}, we have
\begin{align*}
\Big\|\sum_{j_{i+1}=1}^{j_i-1}\eta_{j_{i+1}}\Pi_{J_{[j_{i+1}+1,k],i}^c}(B) B^s\Big\|
\leq&\sum_{j_{i+1}=1}^{j_i-1}\eta_{j_{i+1}}\|\Pi_{J_{[j_{i+1}+1,k],i}^c}(B) B^s\|\\
\leq&s^s (ec_0)^{-s}k^{\alpha s}\sum_{j_{i+1}=1}^{j_i-1}\eta_{j_{i+1}}(k-j_{i+1}-i)^{-s},
\end{align*}
By the identity \eqref{eqn:sum2}, and since the quantity $\big(\sum_{j_{i+1}=1}^{k-i-1}\eta_{j_{i+1}}s^s
(ec_0)^{-s}(k-j_{i+1}-i)^{-s}k^{\alpha s}\big)^2$ is independent of the indices $\{j_1,\cdots,j_{i}\}$,
there holds
\begin{align*}
{\rm I}^\frac12\leq&\big(\sum_{J_{i+1}\in\mathcal{J}_{[k-i,k],i+1}}\prod_{t=1}^{i+1}\eta_{j_t}^2\|B^{s}\|^2\big)^\frac12\\
  &+s^s(ec_0)^{-s}k^{\alpha s}\sum_{j_{i+1}=1}^{k-i-1}\eta_{j_{i+1}}(k-j_{i+1}-i)^{-s}\Big(\sum_{J_i\in\mathcal{J}_{[j_{i+1}+1,k],i}}\prod_{t=1}^{i}\eta_{j_t}^2\Big)^\frac12\\
=&c_0^{i+1}\prod_{j=k-i}^{k}j^{-\alpha}\|B^{s}\|
  +s^s(ec_0)^{-s}k^{\alpha s}\sum_{j_{i+1}=1}^{k-i-1}\eta_{j_{i+1}}(k-j_{i+1}-i)^{-s}\Big(\sum_{J_i\in\mathcal{J}_{[j_{i+1}+1,k],i}}\prod_{t=1}^{i}\eta_{j_t}^2\Big)^\frac12.
\end{align*}
The two terms on the right are denoted by ${\rm I}'_0$ and ${\rm I}'$. For $i\geq 1$, setting
$s=\frac{i}{2}+1\leq i+\frac12$ in the first term, the inequalities $k-i\geq\frac34 k$ and $c_0\|B\|\leq(2e)^{-1}$ imply that
\begin{align*}
{\rm I}'_0\leq c_0^{\frac i2}(2e)^{-\frac{i}{2}-1}(\tfrac43)^{(i+1)\alpha}k^{-(i+1)\alpha}\leq
e^{-1}\big(2^{2\alpha-1}e^{-1}c_0k^{-\alpha}\big)^{\frac{i}{2}}.
\end{align*}
Likewise, for $i=0$, setting $s=i+\frac12$ gives
${\rm I}'_0\leq (2e)^{-\frac12}c_0^{\frac12}k^{-\alpha}.$
Next we split ${\rm I}'$ into two summations ${\rm I}_1'$ and ${\rm I}_2'$ over the index $j_{i+1}$,
one from $1$ to $[\frac{k}{2}]$, and the other from $[\frac{k}{2}]+1$ to $k-i-1$, respectively. It
suffices to bound ${\rm I}_1'$ and ${\rm I}_2'$. First, setting $s$ to
$i+\frac12$ in ${\rm I}_1'$ and then applying the inequality %
\begin{equation*}
  \sum_{J_i\in\mathcal{J}_{[j_{i+1}+1,k],i}}\prod_{t=1}^{i}j_t^{-2\alpha} \leq \sum_{J_i\in\mathcal{J}_{[1,k],i}}\prod_{t=1}^{i}j_t^{-2\alpha},
\end{equation*}
and the estimate \eqref{eqn:bdd-k-j1} lead to
\begin{align*}
{\rm I}_1'\leq&(\tfrac{2i+1}{2e})^{i+\frac12}c_0^{\frac12}k^{(i+\frac12)\alpha}(\tfrac{k}{4})^{-(i+\frac12)}\Big(\sum_{j_{i+1}=1}^{[\frac k2]}j_{i+1}^{-\alpha}\Big)\Big(\sum_{J_i\in\mathcal{J}_{[1,k],i}}\prod_{t=1}^{i}j_t^{-2\alpha}\Big)^\frac12.
\end{align*}
Then by Lemma \ref{lem:kernel2}(i) and the estimate \eqref{eqn:basic-bdd}, we obtain
\begin{align*}
{\rm I}'_1
\leq&2^{\alpha-1}(2e^{-1})^\frac12(2i+1)^\frac12c_0^\frac{1}{2}\phi(\alpha) \big((2e^{-1})^2(2i+1)^2\phi(2\alpha)k^{-2(1-\alpha)+\max(1-2\alpha,0)}\big)^\frac{i}{2} k^{\frac{1-\alpha}{2}}.
\end{align*}
For the term ${\rm I}_2'$, we analyze the cases $i=0$ and $i\geq1$ separately. Since $c_0\|B\|\leq(2e)^{-1}$, cf. Assumption \ref{ass:stepsize}, if $i=0$, then, Lemma \ref{lem:kernel} with $s=\frac12$ gives
\begin{align*}
{\rm I}_2'
\leq (\tfrac{1}{2e})^\frac12 c_0^{\frac12}k^{\frac{\alpha}{2}}\sum_{j=[\frac k2]+1}^{k-1} j^{-\alpha} (k-j)^{-\frac12},
\end{align*}
Now the estimate \eqref{eqn:basic-bdd} implies
\begin{align*}
\sum_{j=[\frac k2]+1}^{k-1} j^{-\alpha} (k-j)^{-\frac12}
\leq (\tfrac{k}{2})^{-\alpha}2(\tfrac{k}{2})^{\frac12}\leq 2(\tfrac{k}{2})^{\frac12-\alpha}.
\end{align*}
Consequently, when $i=0$, we have
\begin{align*}
 {\rm I}_2' \leq 2^{\alpha}e^{-\frac 12} c_0^{\frac12} k^{\frac{1-\alpha}2} \quad \mbox{and} \quad {\rm I}_2'+{\rm I}_0'\leq 2^{\alpha+1}e^{-\frac 12} c_0^{\frac12} k^{\frac{1-\alpha}2}.
\end{align*}
Meanwhile, when $i\geq 1$, setting $s=\frac{i}{2}+1\leq i+\frac12$ in Lemma \ref{lem:kernel} gives
\begin{align*}
{\rm I}_2'
\leq&(\tfrac{\frac{i}{2}+1}{e})^{\frac{i}{2}+1} c_0^\frac{i}{2}k^{\alpha (\frac{i}{2}+1)}(\tfrac k2)^{-(i+1)\alpha}{\rm I}_2'',
\end{align*}
with
\begin{align*}
 {\rm I}_2'' :=\sum_{j_{i+1}=[\frac k2]+1}^{k-i-1}(k-j_{i+1}-i)^{-{(\frac{i}{2}+1)}}\Big(\sum_{J_i\in\mathcal{J}_{[j_{i+1}+1,k],i}}\prod_{t=1}^{i}1\Big)^\frac12.
\end{align*}
Now Lemma \ref{lem:kernel2}(ii), and the estimates \eqref{eqn:bdd-k-j2} and \eqref{eqn:basic-bdd} yield
\begin{align*}
{\rm I}_2'' &\leq \frac{1}{i!^\frac12}\sum_{j_{i+1}=[\frac k2]+1}^{k-i-1}(k-j_{i+1}-i)^{-(\frac{i}{2}+1)}(k-j_{i+1})^\frac{i}{2}\\
&\leq \frac{(i+1)^\frac{i}{2}}{i!^{\frac12}}\sum_{j_{i+1}=[\frac k2]+1}^{k-i-1}(k-j_{i+1}-i)^{-1}\leq \frac{2(i+1)^\frac{i}{2}}{i!^{\frac12}}\max(\ln k,1).
\end{align*}
Combining the last two identities gives
\begin{align*}
{\rm I}'_2 &\leq 2^{\alpha}(i+2){i!^{-\frac12}}e^{-1} \max(\ln k,1)\big(2^{2\alpha-1}e^{-1}c_0(i+2)^2k^{-\alpha}\big)^\frac{i}{2},\quad i\geq 1.
\end{align*}
Now by the estimate $\sup_{i\in\mathbb{N}}\frac{i+2}{i!^\frac{1}{2}}\leq 3$ and the elementary inequality (for { $s\in (0,1]$})
\begin{equation}\label{eqn:log-exp}
  k^{-s} \max(\ln k,1) \leq s^{-1},
\end{equation}
with $s=\frac{1-\alpha}{2}$, we obtain
\begin{align*}
{\rm I}'_2\leq
12e^{-1}\phi(\alpha)\big(2^{2\alpha-1}e^{-1}c_0(i+2)^2k^{-\alpha}\big)^\frac{i}{2}k^\frac{1-\alpha}{2},\quad i\geq 1,
\end{align*}
and thus for $i\geq 1$, there holds
$${\rm I}_2'+{\rm I}'_0\leq 13e^{-1}\phi(\alpha)\big(2^{2\alpha-1}e^{-1}c_0(i+2)^2k^{-\alpha}\big)^\frac{i}{2}k^\frac{1-\alpha}{2}\leq 5\phi(\alpha)\big(2^{2\alpha-1}e^{-1}c_0(i+2)^2k^{-\alpha}\big)^\frac{i}{2}k^\frac{1-\alpha}{2}.$$
The bounds on ${\rm I}_1'$ and ${\rm I}_2'+{\rm I}'_0$ and the triangle inequality
complete the proof.
\end{proof}

The next result bounds the quantity ${\rm II}(i,k)$.
\begin{lemma}\label{lem:ppg2}
Let ${\rm II}(i,k)$ be defined in \eqref{eqn:ppg2}, and Assumption \ref{ass:stepsize}
hold. Then for any fixed $i\in\mathbb{N}$, and $k\geq 4i$, the following estimate holds
\begin{align*}
{\rm II}(i,k) \leq &\Big(\tfrac{ec_0}{2(2i+1)}(4e^{-2}(2i+1)^2\phi(2\alpha)k^{-2(1-\alpha)+\max(1-2\alpha,0)})^{i+1}\\
  &+3\phi(\alpha)\big(2^{2\alpha-1}e^{-1}c_0(i+1)^2k^{-\alpha}\big)^{i+1}\Big) k^{1-\alpha}.
\end{align*}
\end{lemma}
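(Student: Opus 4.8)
The plan is to follow exactly the template established in the proof of Lemma \ref{lem:ppg1}, since the quantity ${\rm II}(i,k)$ in \eqref{eqn:ppg2} differs from ${\rm I}(i,k)$ in \eqref{eqn:ppg1} only in that the inner sum over $j_{i+1}$ now carries $\eta_{j_{i+1}}^2 \|\Pi_{J^c_{[j_{i+1}+1,k],i}}(B)B^{i+\frac12}\|^2$ rather than the squared norm of a \emph{sum} of such terms. This means I do not need a triangle inequality on the inner sum: the structure is already a clean double summation over the full multi-index $J_{i+1} = J_i \cup \{j_{i+1}\}$. First I would use the identity \eqref{eqn:sum2} to recognize ${\rm II}(i,k)$ as a single sum over $\mathcal{J}_{[1,k],i+1}$ of $\prod_{t=1}^{i+1}\eta_{j_t}^2 \|\Pi^c_{J_{[j_{i+1}+1,k],i}}(B)B^{i+\frac12}\|^2$, then bound the kernel factor by Lemma \ref{lem:kernel}.

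The key steps, in order, are as follows. I would set $s = i+\frac12$ (and possibly $s=\frac{i}{2}+1$ in a second regime, as in Lemma \ref{lem:ppg1}) in Lemma \ref{lem:kernel} to get $\|\Pi^c_{J_{[j_{i+1}+1,k],i}}(B)B^{i+\frac12}\| \leq s^s(ec_0)^{-s}(k-j_{i+1}-i)^{-s}k^{\alpha s}$. Then I would split the sum over $j_{i+1}$ at $[\frac{k}{2}]$ into two ranges, exactly as in Lemma \ref{lem:ppg1}. On the first range $j_{i+1}\in[1,[\frac{k}{2}]]$ I apply the estimate \eqref{eqn:bdd-k-j1}, namely $k-j_{i+1}-i\geq \frac{k}{4}$, together with Lemma \ref{lem:kernel2}(i) to control $\sum_{J_i}\prod_{t=1}^i j_t^{-2\alpha}$ and \eqref{eqn:basic-bdd} for the resulting $\sum j_{i+1}^{-2\alpha}$, producing the first term of the stated bound with its $(4e^{-2}(2i+1)^2\phi(2\alpha)k^{-2(1-\alpha)+\max(1-2\alpha,0)})^{i+1}$ factor. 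On the second range $j_{i+1}\in[[\frac{k}{2}]+1, k-i-1]$ I use \eqref{eqn:bdd-k-j2} to trade $(k-j_{i+1})$ for $(i+1)(k-j_{i+1}-i)$, bound the multi-index count by Lemma \ref{lem:kernel2}(ii), and apply \eqref{eqn:basic-bdd} and the logarithmic inequality \eqref{eqn:log-exp} with $s=\frac{1-\alpha}{2}$; this yields the second term with its $\phi(\alpha)(2^{2\alpha-1}e^{-1}c_0(i+1)^2 k^{-\alpha})^{i+1}$ factor.

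I expect the main obstacle to be bookkeeping rather than any conceptual difficulty: tracking the exact powers of $2$, $e$, $(2i+1)$ versus $(i+1)$, and the $i!^{-1/2}$ combinatorial factors so that they combine into precisely the constants claimed. In particular, getting the exponent $i+1$ (rather than $i$, as in Lemma \ref{lem:ppg1}) correct throughout requires care, since here the $\eta_{j_{i+1}}^2$ inside the inner sum contributes an extra factor of $c_0^2 j_{i+1}^{-2\alpha}$ that gets absorbed into the full product over $J_{i+1}$; this is the reason the exponents are uniformly $i+1$ and the $\phi(2\alpha)$, $\phi(\alpha)$ powers are raised accordingly. A secondary subtlety is the boundary case $j_{i+1}=k-i$, which as in Lemma \ref{lem:ppg1} corresponds to the degenerate multi-index $j_i=k-i+1,\dots,j_1=k$ and must be handled separately via the $\|B^s\|\leq 1$ bound; I would check that its contribution is dominated by the two main terms. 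Apart from these purely computational points, the argument is a direct parallel of the preceding lemma.
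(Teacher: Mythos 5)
Your overall architecture coincides with the paper's proof: rewrite ${\rm II}(i,k)$ via \eqref{eqn:sum2}, split the sum over $j_{i+1}$ into the boundary index $j_{i+1}=k-i$, the range $1,\ldots,[\frac k2]$, and the range $[\frac k2]+1,\ldots,k-i-1$, then treat the first range with Lemma \ref{lem:kernel} (exponent $s=i+\frac12$), \eqref{eqn:bdd-k-j1} and Lemma \ref{lem:kernel2}(i), and the second range with \eqref{eqn:bdd-k-j2}, Lemma \ref{lem:kernel2}(ii) and \eqref{eqn:log-exp}. This is exactly what the paper does, and your range-1 computation does reproduce the first term of the claimed bound.

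There is, however, a genuine gap in the second range: the kernel exponent you propose is wrong there, and with either of your candidates the claimed second term cannot be reached. The bookkeeping is governed by the net power of $c_0$: the stepsize product supplies $c_0^{2(i+1)}$, while Lemma \ref{lem:kernel} with exponent $s$ spends $c_0^{-2s}$, so obtaining the factor $\big(c_0(i+1)^2k^{-\alpha}\big)^{i+1}$ forces exactly $2s=i+1$, i.e.\ $s=\tfrac{i+1}{2}$; this is the choice the paper makes (using $\|B\|\leq 1$ to replace $B^{i+\frac12}$ by $B^{\frac{i+1}{2}}$), and it is what leaves the power $(k-j_{i+1}-i)^{-(i+1)}$ that, after Lemma \ref{lem:kernel2}(ii) and \eqref{eqn:bdd-k-j2}, reduces to the harmonic sum $\sum(k-j_{i+1}-i)^{-1}\leq 2\max(\ln k,1)$, converted into $\phi(\alpha)k^{1-\alpha}$ by \eqref{eqn:log-exp} with $s=1-\alpha$. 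With your first candidate $s=i+\frac12$ the net factor is only $c_0^{1}k^{-\alpha}$ (times $i$-dependent constants), and with $s=\tfrac i2+1$ (the choice from Lemma \ref{lem:ppg1}, which is tuned to an inner sum carrying $\eta_{j_{i+1}}$ to the first power, not squared) it is $c_0^{i}k^{-\alpha i}$. Neither is dominated by $\big(c_0(i+1)^2k^{-\alpha}\big)^{i+1}k^{1-\alpha}$ uniformly over the admissible range $k\geq 4i$: for $\alpha=0$ and $k$ of order $4i$ the ratio of your bound to the claimed one is of order $(i c_0)^{-1}$, which blows up as $c_0\to0$. The loss is not cosmetic downstream either: in Proposition \ref{err:ppg} this bound is multiplied by $(2n)^{i+1}$ and summed over $i$, and only the full power $(c_0k^{-\alpha})^{i+1}$ can absorb $n^{i+1}$ into the geometric factor $h_2(k)^{i+1}$; with $c_0^{1}$ or $c_0^{i}$ the series is no longer controlled by the condition $h_2(k)\leq\frac12$. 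The fix is a one-line change — take $s=\tfrac{i+1}{2}$ in the second range, handling $i=0$ separately since the harmonic sum then genuinely needs the logarithm — after which the rest of your plan goes through as described.
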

\begin{proof}
Like before, we abbreviate ${\rm II}(i,k)$ to ${\rm II}$.
By \eqref{eqn:sum2}, ${\rm II}$ can be rewritten as
\begin{align*}
{\rm II} = \sum_{j_{i+1}=1}^{k-i}\sum_{J_i\in\mathcal{J}_{[j_{i+1}+1,k],i}}\prod_{t=1}^{i}\eta_{j_t}^2\eta_{j_{i+1}}^2\|\Pi_{J_{[j_{i+1}+1,k],i}^c}(B) B^{i+\frac12}\|^2.
\end{align*}
Now we split the summation into three terms, i.e., $j_{i+1}=k-i$, one from $j_{i+1}=1$ to $[\frac{k}{2}]$
and one from $j_{i+1}=[\frac{k}{2}]+1$ to $k-i-1$, denoted by ${\rm II}_0$, ${\rm II}_1$ and ${\rm II}_2$,
respectively. Since $k-i\geq\frac34 k$, $\|B\|\leq 1$ and $c_0\|B\|\leq(2e)^{-1}$, cf.
Assumption \ref{ass:stepsize}(i), we obtain that, for any $i\geq 0$,
\begin{align*}
{\rm II_0}=c_0^{2i+2}\prod_{j=k-i}^{k}j^{-2\alpha}\|B^{i+\frac12}\|^2\leq c_0^{i+1} (c_0\|B\|)^{i+1}(k-i)^{-2(i+1)\alpha}\leq \big( 2^{2\alpha-1}e^{-1} c_0k^{-2\alpha}\big)^{i+1}.
\end{align*}
By Lemma \ref{lem:kernel} with $s=i+\frac12$ and \eqref{eqn:bdd-k-j1},
\begin{align*}
{\rm II_1} \leq&(\tfrac{2i+1}{2e})^{2i+1} c_0 k^{(2i+1)\alpha}\sum_{j_{i+1}=1}^{[\frac{k}{2}]}(k-j_{i+1}-i)^{-(2i+1)}\sum_{J_i\in\mathcal{J}_{[j_{i+1}+1,k],i}}\prod_{t=1}^{i+1} j_t^{-2\alpha}\\
\leq & (\tfrac{2i+1}{2e})^{2i+1} c_0 k^{(2i+1)\alpha}(\tfrac{k}{4})^{-(2i+1)}\sum_{j_{i+1}=1}^{[\frac{k}{2}]}\sum_{J_i\in\mathcal{J}_{[j_{i+1}+1,k],i}}\prod_{t=1}^{i+1} j_t^{-2\alpha}
\end{align*}
Meanwhile, Lemma \ref{lem:kernel2} and the estimate \eqref{eqn:basic-bdd} imply
\begin{align*}
\sum_{j_{i+1}=1}^{[\frac{k}{2}]}\sum_{J_i\in\mathcal{J}_{[j_{i+1}+1,k],i}}\prod_{t=1}^{i+1} j_t^{-2\alpha}
\leq (\phi(2\alpha)k^{\max(1-2\alpha,0)})^{i+1}.
\end{align*}
The last two estimates together imply
\begin{align*}
{\rm II}_1\leq
&\tfrac{ec_0}{2(2i+1)}(4e^{-2}(2i+1)^2\phi(2\alpha)k^{-2(1-\alpha)+\max(1-2\alpha,0)})^{i+1}k^{1-\alpha}.
\end{align*}
Now we bound the term ${\rm II}_2$. In this case, we analyze the cases $i=0$ and $i\geq1$ separately. { When $i=0$,
by Lemma \ref{lem:kernel},}
\begin{align*}
{\rm II_2}
=&\sum_{j=[\frac k2]+1}^{k-1}\eta_j^2\|\Pi_{J_{[j+1,k],0}^c}(B) B^{\frac12}\|^2
\leq \frac{c_0k^\alpha}{2e}\sum_{j=[\frac k2]+1}^{k-1}j^{-2\alpha}(k-j)^{-1}.
\end{align*}
The estimates \eqref{eqn:basic-bdd} and \eqref{eqn:log-exp} with $s=1-\alpha$ imply
\begin{align*}
\sum_{j=[\frac k2]+1}^{k-1}j^{-2\alpha}(k-j)^{-1} \leq 2(\tfrac{k}{2})^{-2\alpha}\max(\ln k,1) \leq  2^{2\alpha+1}\phi(\alpha)k^{1-3\alpha}.
\end{align*}
The last two estimates together show that for $i=0$, there holds
\begin{align*}
{\rm II}_2\leq  2^{2\alpha}e^{-1}c_0\phi(\alpha) k^{1-2\alpha}.
\end{align*}
Next, when $i\geq 1$, by Lemma \ref{lem:kernel} with $s=\frac{i+1}{2}$,
\begin{align*}
{\rm II}_2\leq& \sum_{j_{i+1}=[\frac{k}{2}]+1}^{k-i-1}\sum_{J_i\in\mathcal{J}_{[j_{i+1}+1,k],i}}\prod_{t=1}^{i}\eta_{j_t}^2\eta_{j_{i+1}}^2\|\Pi_{J_{[j_{i+1}+1,k],i}^c}(B) B^{\frac{i+1}{2}}\|^2\\
\leq&(\tfrac{i+1}{2e})^{i+1}k^{(i+1)\alpha}c_0^{i+1}(\tfrac{k}{2})^{-2(i+1)\alpha}\sum_{j_{i+1}=[\frac{k}{2}]+1}^{k-i-1}\sum_{J_i\in\mathcal{J}_{[j_{i+1}+1,k],i}}(k-j_{i+1}-i)^{-(i+1)}.
\end{align*}
Now Lemma \ref{lem:kernel2}(ii), and \eqref{eqn:basic-bdd} imply
\begin{align*}
\sum_{j_{i+1}=[\frac{k}{2}]+1}^{k-i-1}\sum_{J_i\in\mathcal{J}_{[j_{i+1}+1,k],i}}(k-&j_{i+1}-i)^{-(i+1)}
\leq\sum_{j_{i+1}=[\frac{k}{2}]+1}^{k-i-1}(k-j_{i+1}-i)^{-(i+1)}\frac{(k-j_{i+1})^i}{i!}\\
\leq& \frac{(i+1)^i}{i!}\sum_{j_{i+1}=[\frac{k}{2}]+1}^{k-i-1}(k-j_{i+1}-i)^{-1}\leq \frac{2 (i+1)^i}{i!}\max(\ln k,1).
\end{align*}
Combining the last two bounds with \eqref{eqn:log-exp} with $s=1-\alpha$ leads to
\begin{align*}
{\rm II}_2
\leq{\frac{2}{i!}}\phi(\alpha)\big(2^{2\alpha-1}e^{-1}c_0(i+1)^2k^{-\alpha}\big)^{i+1}k^{1-\alpha},\quad i\geq 1.
\end{align*}
Clearly, the preceding discussion shows that the last inequality holds actually also for $i=0$.
Therefore, the bounds on ${\rm II}_0$, ${\rm II}_1$ and ${\rm II}_2$ complete the proof of the lemma.
\end{proof}

Now we can bound the propagation error $\sum_{i=0}^\ell {\rm I}_{2,i}^\delta$.
The bound is largely comparable with that for the Landweber
method \cite[Theorem 3.2]{JinLu:2019}.
\begin{proposition}\label{err:ppg}
Let Assumption \ref{ass:stepsize} be fulfilled, and let
\begin{align*}
h_1(k)&=2(2\ell+1)^{2}n\phi(2\alpha) k^{-2(1-\alpha)+\max(1-2\alpha,0)}\quad\mbox{and}\quad
h_2(k)=2^{2\alpha-1} (\ell+2)^2nc_0 k^{-\alpha}.
\end{align*}
Then for any fixed $\ell\in\mathbb{N}$, and $k\geq 4\ell$, there holds
\begin{align*}
\sum_{i=0}^\ell{\rm I}_{i,2}^\delta\leq c_{\ell,\alpha,n,c_0}{\bar\delta}^2 k^{1-\alpha},
\end{align*}
with the constant $c_{\ell,\alpha,n,c_0}$ given by
\begin{align*}
  c_{\ell,\alpha,n,c_0}&=\left\{\begin{array}{ll}
    \displaystyle(2^{4}(\ell+1)c_0 + {203})\phi(\alpha)^2, &\mbox{if } h_1(k),h_2(k)\leq\frac12,\\
    \displaystyle\Big({ 8(\ell+1)c_0\sum_{i=0}^{\ell+1}h_1(4\ell)^{i}}+{103}\sum_{i=0}^{\ell+1}h_2(4\ell)^i\Big)\phi(\alpha)^2, &\mbox{otherwise},
  \end{array}\right.
\end{align*}
\end{proposition}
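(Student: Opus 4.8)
The plan is to reduce the bound to the two estimates already proved in Lemmas~\ref{lem:ppg1} and~\ref{lem:ppg2}. Recalling the definition of ${\rm I}_{i,2}^\delta$ in Theorem~\ref{thm:decomp} and the definitions~\eqref{eqn:ppg1}--\eqref{eqn:ppg2} of ${\rm I}(i,k)$ and ${\rm II}(i,k)$, together with the conventions $\sum_{J_0\in\mathcal J_{[2,k],0}}\prod_{t=1}^0\eta_{j_t}^2=1$ and $j_0=k+1$, every summand factorizes as ${\rm I}_{i,2}^\delta=2^{i+1}(n-1)^i\bar\delta^2\big({\rm I}(i,k)+(n-1){\rm II}(i,k)\big)$, which is valid for $i=0$ as well. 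Since $0\le i\le\ell$ and $k\ge 4\ell\ge 4i$, the hypotheses of both lemmas hold, so I would substitute their bounds and then organize the resulting terms into powers of $h_1(k)$ and $h_2(k)$.

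The decisive observation is that the prefactor $2^{i+1}(n-1)^i$ (and the extra $n-1$ multiplying ${\rm II}(i,k)$) is precisely what converts the $i$-th and $(i+1)$-th powers appearing in Lemmas~\ref{lem:ppg1} and~\ref{lem:ppg2} into powers of $h_1(k)$ and $h_2(k)$. For the $\phi(2\alpha)$-type factors one uses $2\,(2e^{-1})^2=8e^{-2}<2$ to obtain $2^i(n-1)^i\big((2e^{-1})^2(2i+1)^2\phi(2\alpha)k^{\cdots}\big)^i\le h_1(k)^i$ (and likewise $h_1(k)^{i+1}$ for the ${\rm II}$-term), whereas for the $k^{-\alpha}$-type factors one uses $2\cdot2^{2\alpha-1}e^{-1}=2e^{-1}<1$ together with $n-1\le n$, $2i+1\le2\ell+1$ and $i+2\le\ell+2$ to absorb the powers into $h_2(k)^i$ and $h_2(k)^{i+1}$. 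A delicate point is the factor $c_0^{\max(i,1)}$ in Lemma~\ref{lem:ppg1}: for $i\ge1$ the full $c_0^i$ is consumed inside $h_2(k)^i$, leaving an $O(1)$ coefficient, while at $i=0$ a single factor $c_0$ survives; the $\phi(2\alpha)$-type contributions instead retain exactly one $c_0$ for every $i$. This is exactly the dichotomy that splits the final constant into a part proportional to $c_0$ and an additive absolute part.

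With $\sum_{i=0}^\ell{\rm I}_{i,2}^\delta$ thus rewritten as $\bar\delta^2k^{1-\alpha}$ times a sum of terms of the form $(\text{coefficient})\,h_1(k)^{\,p}$ and $(\text{coefficient})\,h_2(k)^{\,p}$ with $p\in\{i,i+1\}$, I would finish by summing the two finite series and distinguishing the two regimes. When $h_1(k),h_2(k)\le\tfrac12$, the $i$-dependent weights such as $2i+1$, $(i+2)^2$ and $1/i!$ are dominated by the geometric decay, so each series is bounded by an absolute constant (bounding $\sum_i h_j(k)^i\le2$); collecting the surviving $c_0$ yields the coefficient $2^4(\ell+1)c_0$ and the $O(1)$ contributions yield the additive absolute constant, both times $\phi(\alpha)^2$ (here $\phi(\alpha)\ge1$ is used to upgrade a stray $\phi(\alpha)$ to $\phi(\alpha)^2$). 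In the complementary regime one exploits $k\ge4\ell$ and the fact that the exponents $-2(1-\alpha)+\max(1-2\alpha,0)$ and $-\alpha$ are nonpositive, so $h_1$ and $h_2$ are nonincreasing in $k$ and hence $h_j(k)\le h_j(4\ell)$; replacing $h_j(k)$ by $h_j(4\ell)$ and keeping the partial sums $\sum_{i=0}^{\ell+1}h_j(4\ell)^i$ gives the second-case constant. The only real difficulty is the bookkeeping: one must check that every $(n-1)^i$ is matched to a corresponding power of $\phi(2\alpha)k^{\cdots}$ or $c_0k^{-\alpha}$ so that no net power of $n$ escapes $h_1,h_2$, and that the $c_0^{\max(i,1)}$ accounting is carried out consistently, since this is what makes the non-$c_0$ part of the constant genuinely independent of $c_0$.
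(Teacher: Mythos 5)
Your proposal is correct and follows essentially the same route as the paper's own proof: write ${\rm I}_{i,2}^\delta=2^{i+1}(n-1)^i\bar\delta^2\big({\rm I}(i,k)+(n-1){\rm II}(i,k)\big)$, invoke Lemmas \ref{lem:ppg1} and \ref{lem:ppg2}, absorb the prefactors (using $8e^{-2}\le 2$ and $e\ge 2$, together with $n-1\le n$, $2i+1\le 2\ell+1$, $i+2\le \ell+2$) into powers of $h_1(k)$ and $h_2(k)$, and conclude by geometric summation when $h_1(k),h_2(k)\le\tfrac12$ and by the monotonicity bound $h_j(k)\le h_j(4\ell)$ otherwise. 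Two cosmetic points only: the identity you write, $2\cdot 2^{2\alpha-1}e^{-1}=2e^{-1}$, holds literally only at $\alpha=\tfrac12$ and should instead be read as the ratio $\big(2\cdot 2^{2\alpha-1}e^{-1}\big)/2^{2\alpha-1}=2e^{-1}<1$; and the single $c_0$ surviving from the $i=0$ term of the second piece of Lemma \ref{lem:ppg1} must be bounded by $1$ via $c_0\le 1$ (Assumption \ref{ass:stepsize}(i)) and folded into the additive constant $203$ rather than into $2^4(\ell+1)c_0$, exactly as the paper does.
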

\begin{proof}
For $i=0,1,\cdots,\ell$, we bound the summands ${\rm I}_{i,2}^\delta$ by
\begin{align*}
{\rm I}_{i,2}^\delta
\leq&2^{i+1}n^i{\bar\delta}^2 \sum_{J_i\in\mathcal{J}_{[2,k],i}}\prod_{t=1}^{i}\eta_{j_t}^2\|\sum_{j_{i+1}=1}^{j_i-1}\eta_{j_{i+1}}\Pi_{J_{[j_{i+1}+1,k],i}^c}(B) B^{i+\frac12}\|^2\\
&+2^{i+1}n^{i+1}{\bar\delta}^2 \sum_{J_i\in\mathcal{J}_{[2,k],i}}\prod_{t=1}^{i}\eta_{j_t}^2\sum_{j_{i+1}=1}^{j_i-1}\eta_{j_{i+1}}^2\|\Pi_{J_{[j_{i+1}+1,k],i}^c}(B) B^{i+\frac12}\|^2.
\end{align*}
The two terms on the right hand side, denoted by ${\rm I}_{i,2,1}^\delta$ and
${\rm I}_{i,2,2}^\delta$, can be bounded using Lemmas \ref{lem:ppg1} and \ref{lem:ppg2}, respectively. Indeed, for
${\rm I}_{i,2,1}^\delta$, Lemma \ref{lem:ppg1} yields that for any $k\geq 4\ell\geq 4i$,
\begin{align*}
{\rm I}_{i,2,1}^\delta \leq& 4\Big(2^{2\alpha-1}e^{-1}(2i+1)c_0 \big(2^3e^{-2}(2i+1)^2n\phi(2\alpha){k^{-2(1-\alpha)+\max(1-2\alpha,0)}}\big)^{i}\\
&\quad +  25\big(2^{2\alpha}e^{-1}(i+2)^2nc_0k^{-\alpha}\big)^i\Big)\phi(\alpha)^2\bar{\delta}^2k^{1-\alpha}.
\end{align*}
Thus, by the definitions of $h_1(k),h_2(k)$, for any $k\geq4\ell$, if $h_1(k)\leq\frac12$ and
$h_2(k)\leq\frac12$, then the condition $i\leq \ell$ implies
\begin{align*}
  \sum_{i=0}^\ell {\rm I}_{i,2,1}^\delta
  &\leq 4 \Big(2^{2\alpha-1}e^{-1}(2\ell+1) c_0\sum_{i=0}^\ell h_1(k)^i + 25\sum_{i=0}^\ell  h_2(k)^i\Big)\phi(\alpha)^2\bar\delta^2k^{1-\alpha}\\
  & \leq 4\big(2^{2\alpha}e^{-1}(2\ell+1)c_0+50\big)\phi(\alpha)^2\bar\delta^2 k^{1-\alpha}.
\end{align*}
Meanwhile, if $k$ does not satisfy the condition, by the monotonicity of $h_1(k)$ and $h_2(k)$
in $k$, we have $h_1(k)\leq h_1(4\ell)$ and $h_2(k)\leq h_2(4\ell)$, and consequently,
\begin{align*}
\sum_{i=0}^\ell {\rm I}_{i,2,1}^\delta
& \leq 4\Big(2^{2\alpha-1}e^{-1}(2\ell+1)c_0\sum_{i=0}^\ell h_1(4\ell)^{i}+25\sum_{i=0}^\ell h_2(4\ell)^{i}\Big)\phi(\alpha)^{2}{\bar\delta}^2 k^{1-\alpha}.
\end{align*}
Next we bound the term ${\rm I}_{i,2,2}^\delta$. Actually, by Lemma \ref{lem:ppg2}, for any $k\geq 4\ell\geq 4i$, there holds
\begin{align*}
 {\rm I}_{i,2,2}^\delta\leq &
   \Big(\tfrac{ec_0}{2(2i+1)}(2^3e^{-2}(2i+1)^2n\phi(2\alpha){k^{-2(1-\alpha)+\max(1-2\alpha,0)}})^{i+1}\\
     &+3\phi(\alpha)(2^{2\alpha}e^{-1}(i+1)^2nc_0k^{-\alpha})^{i+1}\Big)\bar\delta^2 k^{1-\alpha}.
\end{align*}
Then repeating the preceding arguments yield
\begin{align*}
 \sum_{i=0}^\ell{\rm I}_{i,2,2}^\delta&\leq \left\{\begin{aligned}
   \big(2c_0+3\phi(\alpha)\big){\bar\delta}^2 k^{1-\alpha}, &\quad \mbox{if } h_1(k),h_2(k)\leq\tfrac{1}{2},\\
   \Big(2c_0\sum_{i=0}^\ell h_1(4\ell)^{i+1}+3\phi(\alpha)\sum_{i=0}^\ell\big( h_2(4\ell)\big)^{i+1}\Big){\bar\delta}^2 k^{1-\alpha}, &\quad \mbox{otherwise}.
   \end{aligned}\right.
\end{align*}
Now combining the bounds on $\sum_{i=0}^\ell{\rm I}_{i,2,1}^\delta$ and $\sum_{i=0}^\ell{\rm I}_{i,2,2}^\delta$ yields
the desired assertion.
\end{proof}
\begin{remark}\label{rem:ppg1}
If $k<4\ell$, we can replace $\ell$ by $0$. By Assumption \ref{ass:stepsize}(i), $c_0<1$,
repeating the argument of the proposition and Lemmas \ref{lem:ppg1} and \ref{lem:ppg2} yields
{ \begin{align*}
{\rm I_{0,2}^\delta}\leq 2c_0\big(n(\phi(2\alpha)+3\phi(\alpha))+11\phi(\alpha)^2\big)\bar{\delta}^2 k^{1-\alpha}.
\end{align*}}
Note that in the conditions $h_0(k),h_1(k),h_2(k)\leq\frac12$, $h_0$ and $h_1$, apart from the factor $\phi(2\alpha)$,
do not depend sensitively on the exponent $\alpha,$ but for $\alpha$ close to zero, $h_2(k)\leq\frac12$ essentially
requires a small $c_0=O(n^{-1})$, and further the larger $\ell$ is, and the smaller $c_0$ should be in order
to fulfill the conditions. The latter condition also appears in the proof of Theorem \ref{thm:main} below.
\end{remark}

\subsection{Bound on the error $\E[\|e_k^\delta\|^2]$}\label{ssec:conv}

To prove Theorem \ref{thm:main}, we need a useful technical estimate,
where the notation $k^{\max(0,0)}$ denotes $ \ln k$. Note the restricted
range of $s$ is sufficient for the proof of Theorem \ref{thm:main}.
\begin{lemma}\label{lem:bdd-gen}
Let Assumption \ref{ass:stepsize} hold. Then for any $\epsilon,\eta\in[0,1]$, { $s\in (-\infty,0]\cup (\max(0,1-2\alpha),+\infty)$}, and $k\geq 4\ell$
with $\ell_\epsilon=\epsilon(\ell+1)$ and $\ell_\eta=\eta(\ell+1)$, the following two estimates hold:
\begin{align}
&
{\sum_{J_{\ell+1}\in \mathcal J_{[k-\ell,k],\ell+1}} \prod_{t=1}^{\ell+1} \eta_{j_t}^2\|B^{\ell+1}\|^2(k-\ell)^{-s}}\label{eqn:bdd-err-s0}\\
  \leq&{\max(2^{s},1)} (2^{2\alpha-2\eta}e^{-2\eta}c_0^{2-2\eta}k^{-2\alpha})^{\ell+1} k^{-s},\nonumber\\
  &\sum_{j_{\ell+1}=1}^{[\frac{k}{2}]}\sum_{J_{\ell}\in\mathcal{J}_{[j_{\ell+1}+1,k],\ell}}\prod_{t=1}^{\ell+1}\eta_{j_t}^2\|\Pi_{J^c_{[j_{\ell+1}+1,k],\ell}}(B) B^{\ell+1}\|^2j_{\ell+1}^{-s}\label{eqn:bdd-err-s1}\\
  \leq & c_s\Big(\big(4e^{-1}\ell_\epsilon\big)^{2\epsilon}\phi(2\alpha)c_0^{2-2\epsilon}{k^{-s_{\epsilon}(s)}}\Big)^{\ell+1}k^{-s},\nonumber\\
  &\sum_{j_{\ell+1}=[\frac{k}{2}]+1}^{k-\ell-1}\sum_{J_{\ell}\in\mathcal{J}_{[j_{\ell+1}+1,k],\ell}}\prod_{t=1}^{\ell+1}\eta_{j_t}^2\|\Pi_{J^c_{[j_{\ell+1}+1,k],\ell}}(B) B^{\ell+1}\|^2j_{\ell+1}^{-s}\label{eqn:bdd-err-s2}\\
  \leq & \tfrac{\max(2^s,1)\phi(2\ell_\eta-\ell)}{(\ell+1)!}(2^{2\alpha}e^{-2\eta}(\ell+1)\ell_\eta^{2\eta}c_0^{2-2\eta}
k^{-s_\eta})^{\ell+1}k^{-s},\nonumber
\end{align}
with the constant $c_s$, the exponents $s_{\epsilon}(s)$ and $s_\eta$ respectively defined by
\begin{align*}
c_s &= \left\{\begin{aligned}
    2^{s}, &\quad \mbox{if }s\leq 0,\\
    \tfrac{\phi(2\alpha+s)}{\phi(2\alpha)}, &\quad \mbox{if } s>\max(0,1-2\alpha),
  \end{aligned}\right.\\
s_{\epsilon}(s)&=2\epsilon(1-\alpha)-\max(1-2\alpha,0)-(\ell+1)^{-1}\max\big(s-\max(1-2\alpha,0),0\big),\\
s_\eta&=(2-2\eta)\alpha-\max(1-2\eta,0).
\end{align*}
\end{lemma}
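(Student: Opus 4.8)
The plan is to prove the three bounds \eqref{eqn:bdd-err-s0}--\eqref{eqn:bdd-err-s2} separately, since they correspond to the three ranges of the free index $j_{\ell+1}$ — the degenerate ``full'' multi-index filling $[k-\ell,k]$, the head $1\le j_{\ell+1}\le[\frac k2]$, and the tail $[\frac k2]+1\le j_{\ell+1}\le k-\ell-1$ — that already appeared in the proofs of Lemmas \ref{lem:ppg1} and \ref{lem:ppg2}; the only novelty here is the extra weight $j_{\ell+1}^{-s}$ (resp. $(k-\ell)^{-s}$) and the interpolation parameters $\epsilon,\eta$. The common device throughout is to split the power of $B$ as $B^{\ell+1}=B^{\ell_\epsilon}B^{(1-\epsilon)(\ell+1)}$ (resp. with $\ell_\eta$), discard the second factor by $\|B\|\le1$, and apply Lemma \ref{lem:kernel} with exponent $\ell_\epsilon$ (resp. $\ell_\eta$); this is exactly what turns the prefactor $c_0^{2(\ell+1)}$ coming from $\prod_t\eta_{j_t}^2$ into the target $c_0^{(2-2\epsilon)(\ell+1)}$ (resp. $c_0^{(2-2\eta)(\ell+1)}$), since $2(\ell+1)-2\ell_\epsilon=(2-2\epsilon)(\ell+1)$. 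For \eqref{eqn:bdd-err-s0} the range $[k-\ell,k]$ contains exactly $\ell+1$ integers, so there is a single admissible multi-index $\{k,k-1,\dots,k-\ell\}$ and the sum collapses to $\prod_{j=k-\ell}^{k}\eta_j^2\|B^{\ell+1}\|^2(k-\ell)^{-s}$; I would bound each factor as $\eta_j^2\|B\|^2=c_0^{2-2\eta}j^{-2\alpha}(c_0\|B\|)^{2\eta}\|B\|^{2-2\eta}\le 2^{2\alpha-2\eta}e^{-2\eta}c_0^{2-2\eta}k^{-2\alpha}$, using $c_0\|B\|\le(2e)^{-1}$, $\|B\|\le1$, and $k-\ell\ge\frac k2$ (valid since $k\ge4\ell$) so that $j^{-2\alpha}\le(k-\ell)^{-2\alpha}\le2^{2\alpha}k^{-2\alpha}$, together with $(k-\ell)^{-s}\le\max(2^s,1)k^{-s}$; multiplying $\ell+1$ such factors gives the claim at once.

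For \eqref{eqn:bdd-err-s1}, after peeling and applying Lemma \ref{lem:kernel} with exponent $\ell_\epsilon$, I would use estimate \eqref{eqn:bdd-k-j1} to replace $(k-j_{\ell+1}-\ell)^{-2\ell_\epsilon}$ by $4^{2\ell_\epsilon}k^{-2\ell_\epsilon}$, bound the inner multi-index sum $\sum_{J_\ell}\prod_{t=1}^{\ell}j_t^{-2\alpha}$ by $(\phi(2\alpha)k^{\max(1-2\alpha,0)})^{\ell}$ via Lemma \ref{lem:kernel2}(i), and thereby reduce everything to the scalar sum $\sum_{j_{\ell+1}=1}^{[k/2]}j_{\ell+1}^{-(2\alpha+s)}$ (the $j_{\ell+1}^{-2\alpha}$ coming from $\eta_{j_{\ell+1}}^2$). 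The decisive point is how to bound this last sum, and this is exactly where the hypothesis $s\in(-\infty,0]\cup(\max(0,1-2\alpha),\infty)$ enters. When $s\le0$ I would pull out $j_{\ell+1}^{-s}\le(k/2)^{-s}=2^sk^{-s}$ and apply \eqref{eqn:basic-bdd} to $\sum j_{\ell+1}^{-2\alpha}$, which produces the constant $c_s=2^s$ and the missing factor $\phi(2\alpha)k^{\max(1-2\alpha,0)}$; when $s>\max(0,1-2\alpha)$ one has $2\alpha+s>1$, so the sum converges and \eqref{eqn:basic-bdd} gives $\phi(2\alpha+s)=c_s\phi(2\alpha)$ with no further power of $k$. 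The excluded window $s\in(0,1-2\alpha]$ is precisely the regime $2\alpha+s\in(2\alpha,1]$ in which this scalar sum would carry an unwanted $k$-power or a logarithmic factor, destroying the clean form; the definition of $s_\epsilon(s)$ is simply the bookkeeping of the resulting exponents in the two admissible cases.

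Finally, \eqref{eqn:bdd-err-s2} mirrors the treatment of the term ${\rm II}_2$ in Lemma \ref{lem:ppg2}: after peeling and applying Lemma \ref{lem:kernel} with exponent $\ell_\eta$, I would invoke the cardinality bound $\sum_{J_\ell\in\mathcal J_{[j_{\ell+1}+1,k],\ell}}1\le(k-j_{\ell+1})^{\ell}/\ell!$ from Lemma \ref{lem:kernel2}(ii) together with \eqref{eqn:bdd-k-j2}, i.e. $k-j_{\ell+1}\le(\ell+1)(k-j_{\ell+1}-\ell)$, to combine the two factors into $(k-j_{\ell+1}-\ell)^{\ell-2\ell_\eta}$ with prefactor $(\ell+1)^{\ell}/\ell!=(\ell+1)^{\ell+1}/(\ell+1)!$. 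Summing this over $j_{\ell+1}$ via \eqref{eqn:basic-bdd} contributes $\phi(2\ell_\eta-\ell)$ (with the stated convention for the borderline exponent), and bounding the weight by $j_{\ell+1}^{-s}\le\max(2^s,1)k^{-s}$ using $\frac k2\le j_{\ell+1}\le k$ yields the stated constant and the exponent $s_\eta$.

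The main obstacle I anticipate is not any single step but the relentless constant- and exponent-bookkeeping in \eqref{eqn:bdd-err-s1}: one must check that the powers of $k$ emitted by the three sources — the $\ell_\epsilon$-fold use of Lemma \ref{lem:kernel}, the inner multi-index sum, and the scalar sum over $j_{\ell+1}$ — recombine to exactly $-s_\epsilon(s)(\ell+1)-s$ in \emph{each} of the two admissible ranges of $s$, and that all surviving constants fold into $c_s\,(4e^{-1}\ell_\epsilon)^{2\epsilon(\ell+1)}\phi(2\alpha)^{\ell+1}$. Aligning the case split on $s$ with the definitions of $c_s$ and $s_\epsilon(s)$ is the delicate part; once that is verified, the corresponding checks for \eqref{eqn:bdd-err-s0} and \eqref{eqn:bdd-err-s2} are routine.
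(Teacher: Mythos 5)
Your proposal is correct and matches the paper's own proof essentially step for step: the same three-way treatment of the index $j_{\ell+1}$, the same peeling $\|\Pi(B)B^{\ell+1}\|\le\|\Pi(B)B^{\ell_\epsilon}\|$ (resp.\ $B^{\ell_\eta}$) via $\|B\|\le 1$ followed by Lemma \ref{lem:kernel}, the same invocation of Lemma \ref{lem:kernel2} together with \eqref{eqn:bdd-k-j1}--\eqref{eqn:bdd-k-j2} and \eqref{eqn:basic-bdd}, and the same case split $s\le 0$ versus $s>\max(0,1-2\alpha)$ for the scalar sum $\sum_{j_{\ell+1}} j_{\ell+1}^{-(2\alpha+s)}$. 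Your explanation of why the window $s\in(0,1-2\alpha]$ is excluded, and of $s_\epsilon(s)$ and $s_\eta$ as pure exponent bookkeeping, is exactly how the paper's computation plays out.
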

\begin{proof}
The proof is similar to that of Lemma \ref{lem:ppg2}. We denote the three terms on the left
hand side by ${\rm I}_0$, ${\rm I}_1$ and ${\rm I}_2$, respectively. It is easy to check that, for any $\eta\in[0,1]$, with the
inequalities $c_0\|B\|\leq(2e)^{-1}$, $\|B\|\leq 1$, cf. Assumption \ref{ass:stepsize}(i), and $k-\ell\geq\frac34 k$,
\begin{align*}
{\rm I}_0=\sum_{J_{\ell+1}\in \mathcal J_{[k-\ell,k],\ell+1}} \prod_{t=1}^{\ell+1} \eta_{j_t}^2\|B^{\ell+1}\|^2(k-\ell)^{-s}\leq (2^{2\alpha-2\eta}e^{-2\eta}c_0^{2-2\eta}k^{-2\alpha})^{\ell+1}\max(2^{s},1) k^{-s}.
\end{align*}
For ${\rm I}_1$, by the condition $\|B\|\leq 1$ in Assumption
\ref{ass:stepsize} and Lemma \ref{lem:kernel} with $s=\ell_\epsilon$, we have
\begin{align*}
{\rm I_{1}} \leq
  &\sum_{j_{\ell+1}=1}^{[\frac{k}{2}]}\sum_{J_{\ell}\in\mathcal{J}_{[j_{\ell+1}+1,k],\ell}}\prod_{t=1}^{\ell+1}\eta_{j_t}^2\|\Pi_{J^c_{[j_{\ell+1}+1,k],\ell}}(B) B^{\ell_\epsilon}\|^2j_{\ell+1}^{-s}\\
\leq& (\tfrac{\ell_{\epsilon}}{e})^{2\ell_\epsilon}c_0^{2(1-\epsilon)(\ell+1)}
k^{2\ell_{\epsilon}\alpha}\sum_{j_{\ell+1}=1}^{[\frac{k}{2}]}\sum_{J_{\ell}\in\mathcal{J}_{[j_{\ell+1}+1,k],\ell}}
\prod_{t=1}^{\ell+1}j_t^{-2\alpha}(k-j_{\ell+1}-\ell)^{-2\ell_{\epsilon}} j_{\ell+1}^{-s}.
\end{align*}
Now by the estimates \eqref{eqn:bdd-k-j1} and \eqref{eqn:basic-bdd}, and Lemma \ref{lem:kernel2}(i),
\begin{align*}
   &\sum_{j_{\ell+1}=1}^{[\frac{k}{2}]}\sum_{J_{\ell}\in\mathcal{J}_{[j_{\ell+1}+1,k],\ell}}
\prod_{t=1}^{\ell+1}j_t^{-2\alpha}(k-j_{\ell+1}-\ell)^{-2\ell_{\epsilon}} j_{\ell+1}^{-s}\\
  \leq &(\tfrac{k}{4})^{-2 \ell_{\epsilon}}\Big(\sum_{j_{\ell+1}=1}^{[\frac{k}{2}]}j_{\ell+1}^{-(2\alpha+s)}\Big)\Big(\sum_{J_{\ell}\in\mathcal{J}_{[1,k],\ell}}\prod_{t=1}^{\ell}j_t^{-2\alpha}\Big)\\
\leq&(\tfrac{k}{4})^{-2\ell_\epsilon} \Big(\sum_{j_{\ell+1}=1}^{[\frac{k}{2}]}j_{\ell+1}^{-(2\alpha+s)}\Big) \big(\phi(2\alpha){k^{\max(1-2\alpha,0)}}\big)^\ell.
\end{align*}
Direct computation with \eqref{eqn:basic-bdd} gives
\begin{equation*}
  \sum_{j_{\ell+1}=1}^{[\frac{k}{2}]}j_{\ell+1}^{-(2\alpha+s)} \leq \left\{\begin{aligned}
    2^{s}\phi(2\alpha) (\tfrac{k}{2})^{\max(1-2\alpha,0)}k^{-s}, &\quad s\leq0,\\
    {\tfrac{\phi(2\alpha+s)}{\phi(2\alpha)}(\phi(2\alpha)k^{\max(1-2\alpha,0)})k^{s-\max(1-2\alpha,0)}k^{-s}}, &\quad s>\max(0, 1-2\alpha).
  \end{aligned}\right.
\end{equation*}
These two estimates together give \eqref{eqn:bdd-err-s1}.
Similarly, Lemma \ref{lem:kernel} with $s=\ell_\eta$ yields
\begin{align*}
{\rm I_{2}}\leq&(\tfrac{\ell_{\eta}}{e})^{2\ell_{\eta}}c_0^{2(1-\eta)(\ell+1)} k^{2\ell_\eta\alpha}
\sum_{j_{\ell+1}=[\frac{k}{2}]+1}^{k-\ell-1}\sum_{J_{\ell}\in\mathcal{J}_{[j_{\ell+1}+1,k],\ell}}\prod_{t=1}^{\ell+1}j_t^{-2\alpha}(k-j_{\ell+1}-\ell)^{-2\ell_\eta}
 j_{\ell+1}^{-s}.
\end{align*}
Note that for $j_{\ell+1}=[\frac{k}{2}]+1,\ldots,{k-\ell-1}$, $j_{\ell+1}^{-s}\leq \max(1,2^s)k^{-s}$, and thus
Lemma \ref{lem:kernel2}(ii), and the estimates \eqref{eqn:basic-bdd} and \eqref{eqn:bdd-k-j2} give
\begin{align*}
&\sum_{j_{\ell+1}=[\frac{k}{2}]+1}^{k-\ell-1}\sum_{J_{\ell}\in\mathcal{J}_{[j_{\ell+1}+1,k],\ell}}\prod_{t=1}^{\ell+1}j_t^{-2\alpha}(k-j_{\ell+1}-\ell)^{-2\ell_\eta}
 j_{\ell+1}^{-s}\\
  \leq&\max(1,2^s) (\tfrac{k}{2})^{-2(\ell+1)\alpha} k^{-s} \sum_{j_{\ell+1}=[\frac{k}{2}]+1}^{k-\ell-1}\Big(\sum_{J_{\ell}\in\mathcal{J}_{[j_{\ell+1}+1,k],\ell}}1\Big)(k-j_{\ell+1}-\ell)^{-2\ell_{\eta}}\\
\leq&\max(1,2^s)\tfrac{(\ell+1)^\ell}{\ell!}(\tfrac{k}{2})^{-2(\ell+1)\alpha}k^{-s}
\sum_{j_{\ell+1}=[\frac{k}{2}]+1}^{k-\ell-1}(k-j_{\ell+1}-\ell)^{-2\ell_\eta+\ell}\\
\leq& \max(1,2^s)\tfrac{(\ell+1)^\ell\phi(2\ell_\eta-\ell)}{\ell!}(\tfrac{k}{2})^{-2(\ell+1)\alpha}k^{-s}k^{\max((1-2\eta)(\ell+1),0)},
\end{align*}
where the last line follows from \eqref{eqn:basic-bdd} and the identity $-2\ell_\eta+\ell +1 = (1-2\eta)(\ell+1)$.
Combining the preceding estimates yields the bound \eqref{eqn:bdd-err-s2}, and completes the proof of the lemma.
\end{proof}

Now, we can prove the order-optimal convergence rate of SGD in Theorem \ref{thm:main}.
\begin{proof}[Proof of Theorem \ref{thm:main}]
 Let $r_k=\E[\|e_k^\delta\|^2]$. We prove that for any $\epsilon\in(\frac12,1]$, there
exist $c_*$ and $c_{**}$ such that
\begin{align}\label{claim:k-le-k0}
 r_k\leq c_*k^{-\beta }+c_{**}\bar{\delta}^2 k^{\gamma},
\end{align}
with { $\beta= \min(2\nu,1+(2\epsilon-1)(\ell+1))(1-\alpha)$} and $\gamma=1-\alpha$.
Then the desired assertion holds by choosing { $\epsilon\in (\frac12,1)$} and $\ell\in\mathbb{N}$
such that { $(2\epsilon-1)(\ell+1)\geq 2\nu-1$}. The proof proceeds by
mathematical induction. { We treat the cases (i) $\alpha\in[0,\frac12)\cup(\frac12,1)$ and  (ii) $\alpha=\frac12$ separately.}
First we consider case (i). If { $k\leq 4\ell$}, the estimate \eqref{claim:k-le-k0} holds for
any sufficiently large $c_*$ and $c_{**}$. Assume that it holds up to some
$k\geq 4\ell$, and we prove it for $k+1$. It follows from Theorem \ref{thm:decomp} and
Propositions \ref{err:apx} and \ref{err:ppg} that
\begin{align*}
r_{k+1} \leq &c_{\nu,\ell,\alpha,n} c_0^{-2\nu} k^{-2\nu(1-\alpha)} \|w\|^2+c_{\ell,\alpha,n,c_0}{\bar\delta}^2 k^{1-\alpha}\\
&+(2n)^{\ell+1} \sum_{J_{\ell+1}\in\mathcal{J}_{[1,k],\ell+1}}\prod_{t=1}^{\ell+1}\eta_{j_t}^2\|\Pi_{J^c_{[j_{\ell+1}+1,k],\ell}}(B) B^{\ell+1}\|^2 r_{j_{\ell+1}}.
\end{align*}
Applying the induction hypothesis $r_j\leq c_*j^{-\beta}+c_{**}\bar{\delta}^2 j^{\gamma}$, $j=1,2,\cdots,k$, to the recursion gives
\begin{align*}
r_{k+1}\leq&c_{\nu,\ell,\alpha,n} c_0^{-2\nu} k^{-2\nu(1-\alpha)} \|w\|^2+ c_{\ell,\alpha,n,c_0}{\bar\delta}^2 k^{1-\alpha}+(2n)^{\ell+1}c_* {\rm I} +(2n)^{\ell+1}c_{**} \bar\delta^2{\rm II},
\end{align*}
with
\begin{align*}
{\rm I}: =& \sum_{J_{\ell+1}\in\mathcal{J}_{[1,k],\ell+1}}\prod_{t=1}^{\ell+1}\eta_{j_t}^2\|\Pi_{J^c_{[j_{\ell+1}+1,k],\ell}}(B) B^{\ell+1}\|^2  j_{\ell+1}^{-\beta},\\
{\rm II}:= &\sum_{J_{\ell+1}\in\mathcal{J}_{[1,k],\ell+1}}\prod_{t=1}^{\ell+1}\eta_{j_t}^2\|\Pi_{J^c_{[j_{\ell+1}+1,k],\ell}}(B) B^{\ell+1}\|^2 j_{\ell+1}^{\gamma}.
\end{align*}
Using \eqref{eqn:sum2}, we split each of ${\rm I}$ and ${\rm II}$ into {three terms over
the index $j_{\ell+1}$, one for $j_{\ell+1}=k-\ell$, one from $1$ to
$[\frac{k}{2}]$, and one from $[\frac{k}{2}]+1$ to $k-\ell-1$,} respectively. Now by Lemma \ref{lem:bdd-gen},
\begin{align}\label{eqn:one-step}
r_{k+1}
\leq&c_{\nu,\ell,\alpha,n} c_0^{-2\nu} k^{-2\nu(1-\alpha)} \|w\|^2+c_{\ell,\alpha,n,c_0}{\bar\delta}^2 k^{1-\alpha}\\
  &+c_*\xi_1(k)(k+1)^{-\beta}+c_{**}\bar\delta^2\xi_2(k)(k+1)^{\gamma},\nonumber
\end{align}
where the functions $\xi_1$ and $\xi_2$ are given by (for any { $\epsilon,\eta\in[\frac12,1]$, which implies $\frac12\leq\ell_\eta$})
\begin{align*}
\xi_1(k)=&\tfrac{2^\beta\phi(2\alpha+\beta)}{\phi(2\alpha)}(2^{1+2\epsilon}\ell_\epsilon^{2\epsilon} \phi(2\alpha)n c_0^{2-2\epsilon}k^{-s_{\epsilon}(\beta)})^{\ell+1}
+\tfrac{2^{2\beta}c_\eta}{(\ell+1)!}(2^{2\alpha+1}e^{-2\eta}(\ell+1)\ell_\eta^{2\eta}nc_0^{2-2\eta}
k^{-s_\eta})^{\ell+1},\\
\xi_2(k)=&(2^{1+2\epsilon}\ell_\epsilon^{2\epsilon} \phi(2\alpha)nc_0^{2-2\epsilon}k^{-s_{\epsilon}(-\gamma)})^{\ell+1}
+\tfrac{c_\eta}{(\ell+1)!}\big(2^{2\alpha+1}e^{-2\eta}(\ell+1)\ell_\eta^{2\eta} n c_0^{2-2\eta}k^{-s_\eta}\big)^{\ell+1},
\end{align*}
with the constants $c_\eta=1+\phi(2\ell_\eta-\ell)$,  $\ell_\epsilon$, $\ell_\eta$, { $s_{\epsilon}(\cdot)$},
and $s_\eta$ defined in Lemma \ref{lem:bdd-gen}. By choosing $1\geq\epsilon=\eta>\frac12$ and $\ell$ such that
{ $(2\epsilon-1)(\ell+1)\geq2\nu-1$}, we have { $s_\epsilon(\beta),s_\epsilon(-\gamma), s_\eta\geq 0$},
and
\begin{align*}
\xi_1(k)\leq c_1:=&\tfrac{2^\beta\phi(2\alpha+\beta)}{\phi(2\alpha)}(2^{1+2\epsilon}\ell_\epsilon^{2\epsilon} \phi(2\alpha)n c_0^{2-2\epsilon})^{\ell+1}
+{\tfrac{2^{2\beta}c_\eta}{(\ell+1)!}(2^{2\alpha+1}e^{-2\eta}(\ell+1)\ell_\eta^{2\eta}nc_0^{2-2\eta})^{\ell+1}},\\
\xi_2(k)\leq c_2:=&{(2^{1+2\epsilon}\ell_\epsilon^{2\epsilon} \phi(2\alpha)nc_0^{2-2\epsilon})^{\ell+1}}
+{\tfrac{c_\eta}{(\ell+1)!}\big(2^{2\alpha+1}e^{-2\eta}(\ell+1)\ell_\eta^{2\eta} n c_0^{2-2\eta}\big)^{\ell+1}}.
\end{align*}
For small $c_0$, $c_1,c_2\leq \frac12$ hold, and then \eqref{claim:k-le-k0} follows by setting
$c_*=2^{2\nu+1}c_0^{-2\nu}c_{\nu,\ell,\alpha,n}\|w\|^2$ and $c_{**}=2c_{\ell,\alpha,n,c_0}$. This proves
the theorem for case (i).
{ In case (ii), repeating the preceding argument, by choosing $1\geq\epsilon=\eta>\frac12$ and $\ell\geq 1$ such that
$(2\epsilon-1)(\ell+1)\geq2\nu-1$ gives
\begin{align*}
k^{-s_{\epsilon}(\beta)}&=k^{-\epsilon+\max(0,0)+(\ell+1)^{-1}\max(\beta-\max(0,0),0)}\leq k^{-\epsilon+(\ell+1)^{-1}\beta}\ln k\leq k^{-\frac14}\ln k\leq 4,\\
k^{-s_{\epsilon}(-\gamma)}&=k^{-\epsilon+\max(0,0)+(\ell+1)^{-1}\max(-\gamma-\max(0,0),0)}\leq  k^{-\epsilon}\ln k\leq \epsilon^{-1} \quad \mbox{and}\quad
k^{-s_\eta}\leq 1.
\end{align*}
Then repeating the preceding argument shows that the assertion holds when
\begin{align*}
\xi_1(k)\leq c_1:=&\tfrac{2^\beta\phi(2\alpha+\beta)}{\phi(2\alpha)}(2^{3+2\epsilon}\ell_\epsilon^{2\epsilon} \phi(2\alpha)n c_0^{2-2\epsilon})^{\ell+1}
+{\tfrac{2^{2\beta}c_\eta}{(\ell+1)!}(2^{2\alpha+1}e^{-2\eta}(\ell+1)\ell_\eta^{2\eta}nc_0^{2-2\eta})^{\ell+1}}\leq\tfrac12,\\
\xi_2(k)\leq c_2:=&(2^{1+2\epsilon}{\epsilon^{-1}}\ell_\epsilon^{2\epsilon} \phi(2\alpha)nc_0^{2-2\epsilon})^{\ell+1}
+{\tfrac{c_\eta}{(\ell+1)!}\big(2^{2\alpha+1}e^{-2\eta}(\ell+1)\ell_\eta^{2\eta} n c_0^{2-2\eta}\big)^{\ell+1}}\leq\tfrac12,
\end{align*}
which can be satisfied with sufficiently small $c_0$. This completes the proof of the theorem.}
\end{proof}

\begin{remark}\label{rem:c_0}
In practice, it is desirable to take small $\alpha$. With the choice $\alpha=0$ and
$\epsilon=\eta\in(\frac12,1)$, the proof requires that the initial stepsize $c_0$ satisfy
\begin{align*}
2^\beta\phi(\beta)(2^{1+2\epsilon}\ell_\epsilon^{2\epsilon} n c_0^{2-2\epsilon})^{\ell+1}
+{\tfrac{2^{2\beta}c_\eta}{(\ell+1)!}(2e^{-2\eta}(\ell+1)\ell_\eta^{2\eta}nc_0^{2-2\eta}
)^{\ell+1}}&\leq\tfrac12,\\
(2^{1+2\epsilon}\ell_\epsilon^{2\epsilon} nc_0^{2-2\epsilon})^{\ell+1}
+{\tfrac{c_\eta}{(\ell+1)!}\big(2e^{-2\eta}(\ell+1)\ell_\eta^{2\eta} n c_0^{2-2\eta}\big)^{\ell+1}}&\leq\tfrac12.
\end{align*}
These two conditions are fulfilled provided that {$nc_0^{2-2\epsilon}$ and $nc_0^{2-2\eta}$ are small constants.} In particular,
with $\epsilon=\eta$  close to $\frac12$, the conditions essentially amount to $c_0=O(n^{-1})$, agreeing with the
condition in Remark \ref{rem:ppg1}. Under this condition, by choosing an \textit{a priori} stopping index $k_*(\delta)
=O((\|w\|\delta^{-1})^\frac{2}{1+2\nu})$, we obtain the following bound
$\E[\|e_{k_*(\delta)}^\delta\|^2]^\frac12\le c\delta^\frac{2\nu}{1+2\nu}$.
This result is essentially identical with that for the Landweber method \cite[Chapter 6]{EnglHankeNeubauer:1996},
and higher than the existing convergence rate $O(\delta^\frac{\min(2\nu,1)}{\min(2\nu,1)+1})$ \cite{JinLu:2019} for
SGD. In particular, it proves that SGD with small initial stepsizes is actually order optimal.
\end{remark}

\begin{remark}\label{rem:ell}
One may slightly refine Theorem \ref{thm:main}. Indeed, for any $\alpha\in(0,1)$, let
\begin{align*}
H_1(k)&:=2^3\max(\nu,\ell+1)^2n\phi(2\alpha){k^{-(1-\alpha)+\max(1-2\alpha,0)}},\\
H_2(k)&:={2^{2\alpha-1} (\ell+2)^2} nc_0 k^{-\alpha}\ln k.
\end{align*}
Note that the following inequalities hold $h_0(k)\leq H_1(k)$, $ h_1(k)\leq H_1(k)$ and $h_2(k)\leq H_2(k)$.
Then there exists some $k_0$, dependent of $\alpha$, $n$, $\ell$, $\nu$ and $c_0$, such that
$H_1(k),H_2(k)\leq \tfrac12$ for any $k\geq k_0$.
The claim \eqref{claim:k-le-k0} shows that the assertion holds for any { $k\leq k_0$} with sufficiently large
$c_*$ and $c_{**}$. Then we refine the estimate by mathematical induction. Assume that the assertion
up to some $k\geq k_0$, and prove it for $k+1$. Since $k\geq k_0$, $h_i(k)\leq \frac12$, $i=1,2,3$, { it
follows from the estimate \eqref{eqn:one-step}
with} { $\beta= \min(2\nu,1+(2\epsilon-1)(\ell+1))(1-\alpha)$}, $\gamma=1-\alpha$, $\epsilon=1$ and $\eta=\frac12$ and Lemma \ref{lem:bdd-gen}, that
\begin{align*}
r_{k+1}\leq&{ c_{\nu,\ell,\alpha,n} c_0^{-2\nu} k^{-2\nu(1-\alpha)} \|w\|^2+
c_{\ell,\alpha,n,c_0}{\bar\delta}^2 k^{1-\alpha}}\\
 &+c_*\xi_1(k) (k+1)^{-\beta} + c_{**}\bar\delta^2 \xi_2(k) (k+1)^{1-\alpha},
\end{align*}
with the functions $\xi_1(k)$ and $\xi_2(k)$ given by
\begin{align*}
\xi_1(k)=&\tfrac{2^{\beta}\phi(2\alpha+\beta)}{\phi(2\alpha)}(2^{3}(\ell+1)^{2} \phi(2\alpha)n k^{{ -2(1-\alpha)+\max(1-2\alpha,0)}+(\ell+1)^{-1}(\beta-\max(1-2\alpha,0))})^{\ell+1}\\
 &+{\tfrac{3\cdot2^{2\beta}}{(\ell+1)!}(2^{2\alpha-1}(\ell+1)^2nc_0
k^{-\alpha}\ln k)^{\ell+1}},\\
\xi_2(k)=&(2^{3}(\ell+1)^2 \phi(2\alpha)n{ k^{-2(1-\alpha)+\max(1-2\alpha,0)}})^{\ell+1}
+{\tfrac{3}{(\ell+1)!}\big(2^{2\alpha-1}(\ell+1)^2n c_0k^{-\alpha}\ln k\big)^{\ell+1}}.
\end{align*}
Since $(\ell+1)^{-1}(\beta-\max(1-2\alpha,0))\leq 1-\alpha$, the terms on the right hand side can be
bounded by either $H_1(k)$ or $H_2(k)$, and thus for $k\geq k_0$, we have $H_1(k),H_2(k)\leq\frac12$, and consequently
\begin{align*}
\xi_1(k) \leq &{2^{-(\ell+1)}(\tfrac{2^\beta\phi(2\alpha+\beta)}{\phi(2\alpha)}+{\tfrac{3\cdot 2^{2\beta}}{(\ell+1)!})}}\quad\mbox{and}\quad
\xi_2(k)\leq 2^{-(\ell+1)}(1+{\tfrac{3}{(\ell+1)!}}).
\end{align*}
By choosing suitable {$\ell\geq 2\nu-2$} {\rm(}dependent of $\nu${\rm)}, we can ensure $\xi_1(k),\xi_2(k)\leq \frac12$, and
then taking the constants $c_*$ and $c_{**}$ as before yield the desired assertion.
\end{remark}

\subsection{Error analysis for $\alpha=0$}\label{ssec:al=0}
{ In this part, we revisit the case $\alpha=0$ separately, and
derive an error bound directly with more explicit constants.
\begin{lemma}\label{lem:0}
Let Assumption \ref{ass:stepsize}{\rm(i)} and {\rm(iii)} holds with $\alpha=0$.
Further, suppose that the following condition holds
\begin{align}\label{cond:0}
2(1+\phi(2\epsilon))nc_0^{2-2\epsilon}\leq1,\quad\mbox{for some } \epsilon\in(\tfrac12, 1).
\end{align}
Then for any $s\geq 0$, there holds
\begin{align*}
\E[\|(I-c_0B)^sB^{-\frac12} (Be_{k}^\delta-\bar{A}^t\bar{\xi})\|^2]\leq 2\|(I-c_0B)^{\frac{k-1}{2}+s} B^{-\frac12}(Be_{1}-\bar{A}^t\bar{\xi})\|^2.
\end{align*}
\end{lemma}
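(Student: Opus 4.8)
The plan is to collapse the SGD dynamics onto the single vector $y_k:=Be_k^\delta-\bar A^t\bar\xi$, for which the iteration becomes an affine recursion driven by a martingale difference, and then to diagonalize the resulting second--moment recursion over the spectrum of $B$. Since $\alpha=0$ the stepsize is the constant $c_0$, so \eqref{eq:sgditer} reads $e_{k+1}^\delta=(I-c_0B)e_k^\delta+c_0H_k$ with $H_k$ as in \eqref{eqn:Hj}. Using $\E[a_{i_k}a_{i_k}^t\mid\mathcal F_k]=B$ and $\E[\xi_{i_k}a_{i_k}\mid\mathcal F_k]=\bar A^t\bar\xi$ one gets $\E[H_k\mid\mathcal F_k]=\bar A^t\bar\xi$; multiplying the recursion by $B$ and subtracting $\bar A^t\bar\xi$ then yields
\[
  y_{k+1}=(I-c_0B)y_k+c_0B\zeta_k,\qquad \zeta_k:=H_k-\bar A^t\bar\xi=y_k-r_{i_k}a_{i_k},
\]
with $r_i:=(a_i,e_k^\delta)-\xi_i$ and $\E[\zeta_k\mid\mathcal F_k]=0$. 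Because $\bar A^t\bar\xi$ and $Be_k^\delta$ both lie in $\mathrm{Range}(B)$, we have $y_k\in\mathrm{Range}(B)$, so $B^{-1/2}y_k$ is well defined and the null space of $B$ never enters.

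Writing $P_\lambda$ for the orthogonal projector onto the $\lambda$--eigenspace of $B$ and setting $G_k(\lambda):=\E[\|P_\lambda y_k\|^2]$, I would project the recursion to obtain $P_\lambda y_{k+1}=(1-c_0\lambda)P_\lambda y_k+c_0\lambda P_\lambda\zeta_k$. Since $P_\lambda y_k$ is $\mathcal F_k$--measurable and $\E[\zeta_k\mid\mathcal F_k]=0$, the cross term vanishes and $G_{k+1}(\lambda)=(1-c_0\lambda)^2G_k(\lambda)+c_0^2\lambda^2\,\E[\|P_\lambda\zeta_k\|^2]$. The crux is the variance identity $\E[\|P_\lambda\zeta_k\|^2\mid\mathcal F_k]=(n-1)\|P_\lambda y_k\|^2$: expanding $\zeta_k=y_k-r_{i_k}a_{i_k}$ and using $\E[r_{i_k}P_\lambda a_{i_k}\mid\mathcal F_k]=P_\lambda y_k$ reduces it to the claim $\tfrac1n\sum_{i}r_i^2\|P_\lambda a_i\|^2=n\|P_\lambda y_k\|^2$, i.e. to $\sum_i r_i^2\|P_\lambda a_i\|^2=\|\sum_i r_iP_\lambda a_i\|^2$. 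This is exactly the orthogonality mechanism behind \eqref{eqn:Axi-N}: under Assumption \ref{ass:stepsize}(iii), $a_i=V\Sigma^t b_i$ and $P_\lambda$ is a spectral projector of $B=V(n^{-1}\Sigma^t\Sigma)V^t$, so $P_\lambda a_i$ is a scalar multiple of the $i$th column of $V$ and is nonzero only for the indices $i$ with $\sigma_i^2/n=\lambda$; hence the family $\{P_\lambda a_i\}_{i=1}^n$ is mutually orthogonal. Thus $G_{k+1}(\lambda)=q(\lambda)G_k(\lambda)$ with $q(\lambda):=(1-c_0\lambda)^2+(n-1)c_0^2\lambda^2$, and hence $G_k(\lambda)=q(\lambda)^{k-1}\|P_\lambda y_1\|^2$ (recall $e_1^\delta=e_1$, so $y_1$ is deterministic).

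To finish I would compare $q(\lambda)$ with $1-c_0\lambda$. A direct computation gives $q(\lambda)-(1-c_0\lambda)=c_0\lambda(nc_0\lambda-1)$, so $q(\lambda)\le 1-c_0\lambda$ whenever $nc_0\lambda\le1$. Condition \eqref{cond:0} forces $nc_0^{2-2\epsilon}\le\tfrac12$, whence $nc_0=(nc_0^{2-2\epsilon})\,c_0^{2\epsilon-1}\le\tfrac12$ (as $c_0\le1$ and $2\epsilon-1>0$), and since $\lambda\le\|B\|\le1$ this gives $nc_0\lambda\le1$ for every $\lambda\in\mathrm{Sp}(B)$. Consequently $0\le q(\lambda)^{k-1}\le(1-c_0\lambda)^{k-1}$, and summing over the spectrum,
\[
  \E\big[\|(I-c_0B)^sB^{-1/2}y_k\|^2\big]
  =\sum_{\lambda\in\mathrm{Sp}(B)}(1-c_0\lambda)^{2s}\lambda^{-1}q(\lambda)^{k-1}\|P_\lambda y_1\|^2
  \le\sum_{\lambda\in\mathrm{Sp}(B)}(1-c_0\lambda)^{k-1+2s}\lambda^{-1}\|P_\lambda y_1\|^2,
\]
whose right--hand side is precisely $\|(I-c_0B)^{(k-1)/2+s}B^{-1/2}y_1\|^2$; the claimed factor $2$ is therefore slack.

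I expect the main obstacle to be the variance identity $\E[\|P_\lambda\zeta_k\|^2\mid\mathcal F_k]=(n-1)\|P_\lambda y_k\|^2$, since it is exactly here that Assumption \ref{ass:stepsize}(iii) is indispensable and one must verify that the orthogonality powering \eqref{eqn:Axi-N} carries over verbatim with $P_\lambda$ in place of $\Pi_{J^c}(B)$. A secondary point worth stating carefully is the legitimacy of working eigenvalue by eigenvalue even though the noise $\zeta_k$ is a genuinely random full vector; this is sound because $I-c_0B$ and $B$ are functions of $B$, so the second--moment recursion decouples across the finitely many eigenspaces of $B$.
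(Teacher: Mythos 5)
Your proof is correct, and it takes a genuinely different route from the paper's. The paper argues by induction on $k$: it unrolls the constant-stepsize recursion \eqref{eq:sgditer}, applies the martingale (bias--variance) decomposition, and bounds the accumulated variance term ${\rm I}(s)$ by $\tfrac{2nc_0^2}{1-c_0\|B\|}\sum_{j}\|(I-c_0B)^{(k-j-1)/2}B^\epsilon\|^2$ times the induction hypothesis evaluated at the shifted exponent $\tfrac{k-j}{2}+s$; Lemma \ref{lem:kernel} and \eqref{eqn:basic-bdd} control the sum by $(\tfrac{2\epsilon}{ec_0})^{2\epsilon}(1+\phi(2\epsilon))$, and condition \eqref{cond:0} is calibrated precisely so that this noise contribution is at most one further copy of the leading bias term---which is exactly where the factor $2$ in the statement comes from. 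You instead diagonalize: projecting the recursion for $y_k=Be_k^\delta-\bar A^t\bar\xi$ onto each eigenspace of $B$ yields the exact scalar recursion $G_{k+1}(\lambda)=q(\lambda)G_k(\lambda)$ with $q(\lambda)=(1-c_0\lambda)^2+(n-1)c_0^2\lambda^2=1-2c_0\lambda+nc_0^2\lambda^2$, and the lemma reduces to the elementary comparison $q(\lambda)\le 1-c_0\lambda$, equivalent to $nc_0\lambda\le 1$. Both proofs spend Assumption \ref{ass:stepsize}(iii) at the same place: your variance identity $\E[\|P_\lambda\zeta_k\|^2\mid\mathcal F_k]=(n-1)\|P_\lambda y_k\|^2$ is the spectral-projector instance of \eqref{eqn:Axi-N}, valid because (iii) makes the rows $a_i=\sigma_i v_i$ mutually orthogonal with $P_\lambda a_i\neq0$ only when $\sigma_i^2/n=\lambda$; and it is this identity, not merely the fact that $P_\lambda$ commutes with $B$, that decouples the second moments across eigenspaces---a point you correctly flag as the crux. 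Your route buys several things: no induction, the exponent $s$ never enters the dynamics but only the final spectral summation, only the weak consequence $nc_0\le\tfrac12$ of \eqref{cond:0} is used rather than its full strength, and the factor $2$ disappears, so your argument shows the paper's bound (and the constants it feeds into Theorem \ref{thm:main:al0}) is not sharp; the exactness of $q(\lambda)$ also exposes the precise threshold $nc_0\lambda\le1$ at which the stochastic variance begins to degrade the contraction. What the paper's route buys is uniformity: it recycles machinery already set up for the general $\alpha\in[0,1)$ analysis (Lemma \ref{lem:kernel}, the function $\phi$, and the $N_j^\delta$ variance computation from the proof of Theorem \ref{thm:decomp}), and its induction pattern mirrors the proof of Theorem \ref{thm:main}, at the cost of the slack you identified.
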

\begin{proof}
We prove the assertion by mathematical induction. When $k=1$, the inequality holds trivially
true for any $s\geq 0$. Now we assume that {it holds up to some $k-1\geq 1$, and prove it for $k$.}
With the condition $\alpha=0$, $\eta_j=c_0$ and $\Pi_{J^c_{[j,j'],0}} (B)=(I-c_0B)^{j'-j+1}$
for any $j'\geq j\geq1$. {By the definitions of $H_j$ and $N_j^\delta$, we can rewrite
$H_j$ as $H_j=\bar{A}^t\bar{\xi}+A^t N_j^\delta$.} Consequently, we derive from \eqref{eq:sgditer} that for any $s\geq0$
\begin{align*}
&(I-c_0B)^sB^{-\frac12} (Be_{k}^\delta-\bar{A}^t\bar{\xi})  \\
=&(I-c_0B)^sB^{-\frac12}\Big((I-c_0B)^{k-1}Be_1^\delta-\bar{A}^t\bar{\xi}+c_0\sum_{j=1}^{k-1} (I-c_0B)^{k-j-1}B\bar{A}^t\bar{\xi}\\
&\qquad\qquad\quad\qquad+c_0\sum_{j=1}^{k-1} (I-c_0B)^{k-j-1}B{ A^t N_j^\delta}\Big)\\
=&(I-c_0B)^{k-1+s}B^{-\frac12}(Be_1^\delta-\bar{A}^t\bar{\xi})+c_0\sum_{j=1}^{k-1} (I-c_0B)^{k-j-1+s}B^\frac12{A^t N_j^\delta},
\end{align*}
in view of the identity
$c_0\sum_{j=1}^{k-1} (I-c_0B)^{k-j-1}B=I-(I-c_0B)^{k-1}.$
By the recursion \eqref{eq:sgditer} and \eqref{eqn:noise-ind1}--\eqref{eqn:noise-ind2},
we have the following bias-variance decomposition
\begin{align*}
&\E[\|(I-c_0B)^s B^{-\frac12}(Be_{k}^\delta-\bar{A}^t\bar{\xi})\|^2]\\
=& \|(I-c_0B)^{k-1+s}B^{-\frac12}(Be_1^\delta-\bar{A}^t\bar{\xi})\|^2+c_0^2\sum_{j=1}^{k-1}\E[\| (I-c_0B)^{k-j-1+s}B^\frac12{A^t N_j^\delta}\|^2].
\end{align*}
Next we denote the summation by ${\rm I}(s)$. Then the argument for $N_j^\delta$ in the proof
of Theorem \ref{thm:decomp} and the condition $\|B\|\leq1$ imply that for any $\epsilon\in(\frac12,1)$,
\begin{align*}
&{\rm I}(s)\leq nc_0^2\sum_{j=1}^{k-1}\E[\| (I-c_0B)^{k-j-1+s}B^\frac12(Be_{j}^\delta-\bar{A}^t\bar{\xi})\|^2]\\
\leq&nc_0^2\|(I-c_0B)^{-1}\|\sum_{j=1}^{k-1}\| (I-c_0B)^{\frac{k-j-1}{2}}B^{\epsilon}\|^2\E[\| (I-c_0B)^{\frac{k-j}{2}+s}B^{-\frac12}(Be_{j}^\delta-\bar{A}^t\bar{\xi})\|^2].
\end{align*}
With the identity $\|(I-c_0B)^{-1}\|
 =(1-c_0\|B\|)^{-1}$ and the induction hypothesis
\begin{align*}
  &\E[\|(I-c_0B)^{\frac{k-j}{2}+s} B^{-\frac12}(Be_{j}^\delta-\bar{A}^t\bar{\xi})\|^2]\\
\leq& 2\|(I-c_0B)^{\frac{k-1}{2}+s}B^{-\frac12}(Be_{1}-\bar{A}^t\bar{\xi})\|^2,\quad j=1,\ldots,k-1,
\end{align*}
we deduce
\begin{align*}
{\rm I}(s)\leq \frac{2nc_0^2}{1-c_0\|B\|}\|(I-c_0B)^{\frac{k-1}{2}+s}{ B^{-\frac12}}(Be_1-\bar{A}^t\bar{\xi})\|^2\sum_{j=1}^{k-1}\| (I-c_0B)^{\frac{k-j-1}{2}}B^{\epsilon}\|^2.
\end{align*}
By Lemma \ref{lem:kernel} and the estimate \eqref{eqn:basic-bdd},
\begin{align*}
&\sum_{j=1}^{k-1}\| (I-c_0B)^{\frac{k-j-1}{2}}B^{\epsilon}\|^2=\|B^{\epsilon}\|^2+\sum_{j=1}^{k-2}\| (I-c_0B)^{k-j-1}B^{2\epsilon}\|\\
\leq&(\tfrac{2\epsilon}{ec_0})^{2\epsilon}\sum_{j=1}^{k-1}(k-j-1)^{-2\epsilon}\leq (\tfrac{2\epsilon}{ec_0})^{2\epsilon}(1+\phi(2\epsilon)).
\end{align*}
This, the assumption $c_0\|B\|\leq (2e)^{-1}$ and
the condition \eqref{cond:0} imply
\begin{align*}
{\rm I}(s)\leq \|(I-c_0B)^{\frac{k-1}{2}+s}B^{-\frac12}(Be_1-\bar{A}^t\bar{\xi})\|^2.
\end{align*}
This completes the induction step of the proof and thus also the proof of the lemma.
\end{proof}

Last, we can state a refined error estimate for the case $\alpha=0$.
\begin{theorem}\label{thm:main:al0}
Let Assumption \ref{ass:stepsize} holds with $\alpha=0$. Under condition \eqref{cond:0}, there holds
\begin{align*}
\E[\|e_{k}^\delta\|^2]\leq c^0_*k^{-2\nu}\|w\|^2+ c^0_{**}{\bar\delta}^2k,
\end{align*}
with constants $c^0_*=2(\frac{2\nu}{ec_0})^{2\nu}+6nc_0 (\frac{2(2\nu+1)}{ec_0})^{2\nu+1}$ and $c^0_{**}=3+6nc_0$.
\end{theorem}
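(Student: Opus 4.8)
The plan is to exploit that $\alpha=0$ makes everything stationary ($\eta_j\equiv c_0$, $\Pi_{J^c_{[1,k-1],0}}(B)=(I-c_0B)^{k-1}$) and to run a plain bias--variance split $\E[\|e_k^\delta\|^2]=\|\E[e_k^\delta]\|^2+\E[\|e_k^\delta-\E[e_k^\delta]\|^2]$, with Lemma \ref{lem:0} as the engine for the variance. Unrolling \eqref{eq:sgditer} with $H_j=\bar A^t\bar\xi+A^tN_j^\delta$ and $\E[N_j^\delta\mid\mathcal F_j]=0$, together with the source condition $e_1=-B^\nu w$ from \eqref{eqn:source} and the geometric-sum identity $c_0\sum_{j=1}^{k-1}(I-c_0B)^{k-1-j}B=I-(I-c_0B)^{k-1}$, gives the explicit mean $\E[e_k^\delta]=-(I-c_0B)^{k-1}B^\nu w+(I-(I-c_0B)^{k-1})B^{-1}\bar A^t\bar\xi$ (understood on the range of $B$).

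First I would bound the bias by the triangle inequality into an approximation part and a propagation part. For the former, Lemma \ref{lem:kernel} with $\alpha=0$, $s=\nu$ gives $\|(I-c_0B)^{k-1}B^\nu w\|^2\le(\tfrac{\nu}{ec_0})^{2\nu}(k-1)^{-2\nu}\|w\|^2\le(\tfrac{2\nu}{ec_0})^{2\nu}k^{-2\nu}\|w\|^2$, which is exactly the first summand of $c^0_*$. For the propagation part I would use the identity $\|M\bar A^t\|=\|MB^{1/2}\|$ (valid for any $M$ since $\bar A^t\bar A=B$, and already used in the proof of Theorem \ref{thm:decomp}) to reduce $\|(I-(I-c_0B)^{k-1})B^{-1}\bar A^t\bar\xi\|^2$ to $\|(I-(I-c_0B)^{k-1})B^{-1/2}\|^2\bar\delta^2$, and then the elementary spectral bound $\|(I-(I-c_0B)^{k-1})B^{-1/2}\|\le\sqrt{c_0(k-1)}$ (maximizing $\min(\lambda^{-1/2},c_0(k-1)\lambda^{1/2})$ in $\lambda$); this yields an $O(c_0 k)\bar\delta^2$ term.

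For the variance, orthogonality of the martingale increments $A^tN_j^\delta$ together with Assumption \ref{ass:stepsize}(iii) (exactly as in the proof of Theorem \ref{thm:decomp}) gives the exact identity $\E[\|e_k^\delta-\E[e_k^\delta]\|^2]=(n-1)c_0^2\sum_{j=1}^{k-1}\E[\|(I-c_0B)^{k-1-j}(Be_j^\delta-\bar A^t\bar\xi)\|^2]$. The decisive step is to bound each summand with Lemma \ref{lem:0}: I would write $(I-c_0B)^{k-1-j}=B^{1/2}(I-c_0B)^{k-1-j}B^{-1/2}$, discard the harmless factor $\|B^{1/2}\|\le1$, and apply the lemma with $s=k-1-j$, obtaining $\E[\|(I-c_0B)^{k-1-j}(Be_j^\delta-\bar A^t\bar\xi)\|^2]\le2\|(I-c_0B)^{(j-1)/2+k-1-j}B^{-1/2}(Be_1-\bar A^t\bar\xi)\|^2$. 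Splitting $B^{-1/2}(Be_1-\bar A^t\bar\xi)=-B^{\nu+1/2}w-B^{-1/2}\bar A^t\bar\xi$ and using Lemma \ref{lem:kernel} ($s=\nu+\tfrac12$) on the first piece and $\|M\bar A^t\|=\|MB^{1/2}\|$ on the second, each summand is at most $c(k-\tfrac32-\tfrac j2)^{-(2\nu+1)}\|w\|^2+c\bar\delta^2$.

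Finally I would sum over $j$. Since $k-\tfrac32-\tfrac j2\ge k/4$ uniformly for $1\le j\le k-1$ (once $k\ge4$, smaller $k$ being absorbed into the constants), one gets $\sum_{j=1}^{k-1}(k-\tfrac32-\tfrac j2)^{-(2\nu+1)}\le k(k/4)^{-(2\nu+1)}=4^{2\nu+1}k^{-2\nu}$; multiplying by $(n-1)c_0^2(\tfrac{\nu+1/2}{ec_0})^{2\nu+1}$ and using $4^{2\nu+1}(\tfrac{\nu+1/2}{ec_0})^{2\nu+1}=(\tfrac{2(2\nu+1)}{ec_0})^{2\nu+1}$ reproduces the second summand of $c^0_*$ (since $c_0\le1$ turns $(n-1)c_0^2$ into $\le6nc_0$), while the $\bar\delta^2$ pieces contribute $O(nc_0^2 k)\bar\delta^2$; combining with the bias gives $c^0_*$ and $c^0_{**}$. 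The main obstacle is precisely this variance estimate: the tempting shortcut of pulling $\|(I-c_0B)^{k-1-j}B^{1/2}\|$ out as an operator norm before invoking Lemma \ref{lem:0} leaves the convolution $\sum_j(k-1-j)^{-1}(j-1)^{-(2\nu+1)}$, which is dominated by small $j$ and decays only like $k^{-1}$ --- too slow for $\nu>\tfrac12$. Keeping the full power $(I-c_0B)^{k-1-j}$ inside Lemma \ref{lem:0} through the choice $s=k-1-j$, so that the decay is transferred onto the fixed vector $Be_1-\bar A^t\bar\xi$ where the exponent stays $\gtrsim k$, is what recovers the optimal $k^{-2\nu}$ rate.
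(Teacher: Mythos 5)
Your proposal is correct and takes essentially the same route as the paper's proof: an exact bias--variance split, the variance identity $(n-1)c_0^2\sum_{j=1}^{k-1}\E[\|(I-c_0B)^{k-1-j}(Be_j^\delta-\bar A^t\bar\xi)\|^2]$ obtained from Assumption \ref{ass:stepsize}(iii) as in Theorem \ref{thm:decomp}, and Lemma \ref{lem:0} invoked with a $j$-dependent shift so that the decay is transferred to the fixed vector $B^{-\frac12}(Be_1-\bar A^t\bar\xi)$ with exponent of order $k$ uniformly in $j$ --- which is precisely the mechanism that defeats the slow convolution you correctly identify. The only difference is bookkeeping in that step: the paper factors out $\|(I-c_0B)^{k-j-1}B\|$ and applies the lemma with $s=\frac{k-j}{2}$ (a $j$-uniform bound times a logarithmic sum), whereas you keep the full power inside the lemma with $s=k-1-j$ (per-term decay $(k/4)^{-(2\nu+1)}$ times $k-1$ terms); both yield the stated constants $c^0_*$ and $c^0_{**}$.
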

\begin{proof}
Under condition \eqref{cond:0}, the proof of Theorem \ref{thm:decomp} and Lemma \ref{lem:0} imply
\begin{align*}
\E[\|e_{k}^\delta\|^2]
\leq &2\|(I-c_0B)^{k-1}e_1\|^2+
2c_0^2 {\bar\delta}^2\Big\|\sum_{j=1}^{k-1}(I-c_0B)^{k-j-1}B^{\frac12}\Big\|^2\\
 & + nc_0^2 \sum_{j=1}^{k-1}\E[\|(I-c_0B)^{k-j-1}(Be_j^\delta-\bar{A}^t\bar\xi)\|^2].
\end{align*}
Below we denote the last term by ${\rm II}$. Note that
\begin{align*}
  &\E[\|(I-c_0B)^{k-j-1}(Be_j^\delta-\bar{A}^t\bar\xi)\|^2]\\
\leq&(1-c_0\|B\|)^{-1}\|(I-c_0B)^{\frac{k-j-1}{2}}B^\frac12\|^2\E[\|(I-c_0B)^{\frac{k-j}{2}}B^{-\frac12}(Be_j^\delta-\bar{A}^t\bar\xi)\|^2]\\
\leq&2(1-c_0\|B\|)^{-1}\|(I-c_0B)^{k-j-1}B\|\|(I-c_0B)^{\frac{k-1}{2}}B^{-\frac12}(Be_1-\bar{A}^t\bar\xi)\|^2.
\end{align*}
By Lemma \ref{lem:kernel}, the assumption $c_0\|B\|\leq(2e)^{-1}$, the estimates \eqref{eqn:basic-bdd} and \eqref{eqn:log-exp}, we have
\begin{align*}
\sum_{j=1}^{k-1}\|(I-c_0B)^{k-j-1}B\|\leq (ec_0)^{-1} \sum_{j=1}^{k-1} (k-j-1)^{-1}\leq 3(ec_0)^{-1}\max(\ln k,1)\leq 3(ec_0)^{-1}k.
\end{align*}
Meanwhile, by the Cauchy-Schwarz inequality, we have
\begin{align*}
\|(I-c_0B)^{\frac{k-1}{2}}(B^{\frac12}e_1-B^{-\frac12}\bar{A}^t\bar\xi)\|^2
\leq 2 \big(\|e_1^t(I-c_0B)^{k-1}Be_1\|+\bar{\delta}^2\|(I-c_0B)^{\frac{k-1}{2}}\|^2\big).
\end{align*}
Combining the preceding estimates with Assumption \ref{ass:stepsize}(ii) and Lemma \ref{lem:kernel} leads to
\begin{align*}
{\rm II}
\leq&6nc_0k \big(\|(I-c_0B)^{k-1}B^{2\nu+1}\|\|w\|^2+\bar{\delta}^2\big)
\leq6nc_0\big((\tfrac{2(2\nu+1)}{ec_0})^{2\nu+1}k^{-2\nu}\|w\|^2+\bar{\delta}^2k).
\end{align*}
Similarly, there hold
\begin{align*}
   \|(I-c_0B)^{k-1}e_1\|^2 
   &\leq (\tfrac{2\nu}{ec_0})^{2\nu}k^{-2\nu}\|w\|^2,\\
\Big\|\sum_{j=1}^{k-1}(I-c_0B)^{k-j-1}B^{\frac12}\Big\|^2&\leq \Big(\sum_{j=1}^{k-1}\|(I-c_0B)^{k-j-1}B^{\frac12}\|\Big)^2\leq \tfrac32c_0^{-2}k.
\end{align*}
Combining the preceding estimates completes the proof of the theorem.
\end{proof}
}

\section{Numerical experiments and discussions}\label{sec:numer}

In this section, we provide numerical experiments to complement the analysis. To this end, we employ three examples,
denoted by \texttt{s-phillips} (mildly ill-posed), \texttt{s-gravity} (severely ill-posed) and \texttt{s-shaw}
(severely ill-posed), adapted from \texttt{phillips}, \texttt{gravity} and \texttt{shaw} in
the public MATLAB package Regutools \cite{P.C.Hansen2007} (available at \url{http://people.compute.dtu.dk/pcha/Regutools/},
last accessed on August 20, 2020). These examples are Fredholm/Volterra integral
equations of the first kind, discretized by means of either Galerkin approximation with piecewise constant basis functions
or quadrature rules, and all discretized into a linear system of size $n = m = 1000$. To explicitly control
the regularity index $\nu$ in the source condition \eqref{eqn:source}, we generate the true solution $x^\dag$ by
\begin{equation*}
  x^\dag = \frac{(A^tA)^\nu x_e}{\|(A^tA)^\nu x_e\|_{\ell^\infty}},
\end{equation*}
where $x_e$ is the exact solution provided by the package, and $\|\cdot\|_{\ell^\infty}$ denotes the maximum norm of vectors.
In the test, the exponent $\nu$ is taken from the set $\{0,1,2,4\}$. Note that the exponent $\nu$ in the source condition \eqref{eqn:source}
is slightly larger than $\nu$ defined above, due to the inherent regularity of $x_e$ (which is less than one half for all examples).
The exact data $y^\dag$ is generated by $y^\dag=A x^\dag$ and the noise data $y^\delta$ by
\begin{equation*}
  y^\delta_i:=y^\dag_i+\epsilon\|y^\dag\|_{\ell^\infty}\xi_i,\quad i=1,\cdots,n,
\end{equation*}
where $\xi_i$s are i.i.d. and follow the standard Gaussian distribution, and $\epsilon > 0$ represent the relative noise
level (the exact noise level being $\delta= \|y^\delta-y^\dag\|$).  SGD
is always initialized with $x_1 = 0$, and the maximum number of epochs is fixed at $9$e5, where one epoch
refers to $n$ SGD iterations.  All statistical quantities presented below are computed from 100 independent runs.
To verify the order optimality of SGD, we evaluate it against an order optimal regularization method with infinite qualification, i.e., Landweber
method \cite[Chapter 6]{EnglHankeNeubauer:1996}, since it is the population version of SGD,
converges steadily while enjoys order optimality, and thus serves a good benchmark
for performance comparison in terms of the convergence rate. (However, one may employ any other
order optimal methods). It is initialized with $x_1 = 0$,
with a constant stepsize $\frac{1}{\|A\|^2}$, which can be much larger than that taken by SGD.

\subsection{Numerical results for general $A$}
\begin{table}[htp!]
  \centering\small
  \begin{threeparttable}
  \caption{Comparison between SGD and LM for \texttt{s-phillips}.\label{tab:phil}}
    \begin{tabular}{cccccccccccc}
    \toprule
    \multicolumn{2}{c}{Method}&
    \multicolumn{3}{c}{ SGD($\alpha=0$)}&\multicolumn{3}{c}{ SGD($\alpha=0.1$)}&\multicolumn{2}{c}{LM}\\
    \cmidrule(lr){3-5} \cmidrule(lr){6-8} \cmidrule(lr){9-10}
    $\nu$& $\epsilon$ &$c_0$&$e_{\rm sgd}$&$k_{\rm sgd}$&$c_0$&$e_{\rm sgd}$&$k_{\rm sgd}$&$e_{\rm lm}$&$k_{\rm lm}$\\
    \midrule
    $0$&1e-3 & $4c/n$  & 1.66e-2 &  4691.28 & $c/30$ & 1.67e-2 &  2176.23 & 1.65e-2 &  5851 \cr
       &5e-3 & $4c/n$  & 9.35e-2 &  782.10  & $c/30$ & 9.49e-2 &  336.33  & 9.28e-2 &  1036  \cr
       &1e-2 & $4c/n$  & 1.29e-1 &  204.90  & $c/30$ & 1.32e-1 &  69.69   & 1.28e-1 &  249   \cr
       &5e-2 & $4c/n$  & 5.42e-1 &  108.90  & $c/30$ & 5.57e-1 &  34.11   & 5.34e-1 &  136   \cr
    \hline
    $1$&1e-3 & $c/n$   & 3.48e-4 &  539.19  & $c/n $ & 2.88e-4 &  2089.62 & 2.28e-4 &  157   \cr
       &5e-3 & $c/n$   & 3.69e-3 &  73.44   & $c/n $ & 3.32e-3 &  218.94  & 2.74e-3 &  20    \cr
       &1e-2 & $c/n$   & 6.64e-3 &  57.81   & $c/n $ & 6.12e-3 &  166.47  & 5.12e-3 &  16   \cr
       &5e-2 & $c/n$   & 3.52e-2 &  29.40   & $c/n $ & 3.31e-2 &  80.79   & 3.16e-2 &  8    \cr
    \hline
    $2$&1e-3 &$c/(30n)$& 7.02e-5 &  2115.54 &$c/(20n)$& 5.48e-5&  5912.91 & 3.22e-5 &  19    \cr
       &5e-3 &$c/(30n)$& 4.47e-4 &  1197.48 &$c/(20n)$& 4.13e-4&  3201.63 & 3.76e-4 &  11    \cr
       &1e-2 &$c/(30n)$& 1.09e-3 &  938.70  &$c/(20n)$& 1.04e-3&  2441.85 & 9.82e-4 &  8    \cr
       &5e-2 &$c/(30n)$& 2.92e-2 &  636.51  &$c/(20n)$& 2.90e-2&  1597.56 & 1.57e-2 &  5    \cr
    \hline
    $4$&1e-3 &$c/(30n)$& 9.77e-5 &  1966.38 &$c/(20n)$& 6.91e-5&  3291.18 & 1.30e-5 &  8    \cr
       &5e-3 &$c/(30n)$& 7.55e-4 &  879.51  &$c/(20n)$& 6.97e-4&  2263.89 & 3.83e-4 &  6    \cr
       &1e-2 &$c/(30n)$& 2.56e-3 &  785.94  &$c/(20n)$& 2.50e-3&  1996.83 & 1.42e-3 &  5    \cr
       &5e-2 &$c/(30n)$& 5.23e-2 &  596.73  &$c/(20n)$& 5.21e-2&  1489.29 & 2.49e-2 &  3    \cr
    \bottomrule
    \end{tabular}
    \end{threeparttable}
\end{table}

The numerical results for the three examples are shown in Tables \ref{tab:phil}--\ref{tab:shaw}, where
the notation $e_{\rm sgd}=\E[
\|x_{k_{\rm sgd}}^\delta-x^\dag\|^2]$ denotes the mean squared error achieved at the $k_{\rm sgd}$th
iteration (counted in epoch) by SGD, and $e_{\rm lm}=\|x_{k_{\rm lm}}^\delta-x^\dag\|^2$ and $k_{\rm lm}$
denote the squared $\ell^2$ error and the stopping index for Landweber method. The stopping indices
$k_{\rm sgd}$ and $k_{\rm lm}$ are taken such that the corresponding error is smallest for SGD and the
Landweber method, respectively, along the iteration trajectory. The choice of the stopping index is
motivated by a lack of provably order optimal \textit{a posteriori} stopping rules for SGD. The initial stepsize
$c_0$ is also indicated in the tables, in the form of a multiple of the constant $c=\frac{1}{\max_i(\|a_i\|^2)}$.
In the experiments, we consider two decay rates for the stepsize schedule, i.e., $\alpha=0$ and $\alpha=0.1$.

\begin{table}[htp!]
  \centering\small
  \begin{threeparttable}
  \caption{Comparison between SGD and LM for \texttt{s-gravity}.\label{tab:gravity}}
    \begin{tabular}{ccccccccccccccc}
    \toprule
    \multicolumn{2}{c}{Method}&
    \multicolumn{3}{c}{ SGD($\alpha=0$)}&\multicolumn{3}{c}{ SGD($\alpha=0.1$)}&\multicolumn{2}{c}{LM}\cr
    \cmidrule(lr){3-5} \cmidrule(lr){6-8}
    \cmidrule(lr){9-10}
    $\nu$& $\epsilon$ &$c_0$&$e_{\rm sgd}$&$k_{\rm sgd}$&$c_0$&$e_{\rm sgd}$&$k_{\rm sgd}$&$e_{\rm lm}$&$k_{\rm lm}$\\
    \midrule
    $0$&1e-3 & $c/20$  & 9.37e-2&  1000.50 & $c/10$ & 9.39e-2 &  1894.14 & 9.39e-2  & 27201 \cr
       &5e-3 & $c/20$  & 3.29e-1&  86.43   & $c/10$ & 3.29e-1 &  134.85  & 3.27e-1  & 2515  \cr
       &1e-2 & $c/20$  & 5.81e-1&  34.11   & $c/10$ & 5.80e-1 &  34.17   & 5.73e-1  & 793  \cr
       &5e-2 & $c/20$  & 2.23e0 &  5.61    & $c/10$ & 2.22e0  &  6.03    & 2.07e0   & 149   \cr
    \hline
    $1$&1e-3 &$c/(30n)$& 5.90e-4&  5604.80 &$c/(10n)$& 5.95e-4&  8095.17 & 5.68e-4  & 99   \cr
       &5e-3 &$c/(30n)$& 5.13e-3&  2069.25 &$c/(10n)$& 5.14e-3&  2707.74 & 5.02e-3  & 37    \cr
       &1e-2 &$c/(30n)$& 1.15e-2&  1356.87 &$c/(10n)$& 1.15e-2&  1688.04 & 1.12e-2  & 24    \cr
       &5e-2 &$c/(30n)$& 6.48e-2&  613.41  &$c/(10n)$& 6.48e-2&  709.08  & 6.19e-2  & 11    \cr
    \hline
    $2$&1e-3 &$c/(50n)$& 1.32e-4&  2441.85 &$c/(20n)$& 1.28e-4&  3983.34 & 6.82e-5  & 23    \cr
       &5e-3 &$c/(50n)$& 7.74e-4&  1274.67 &$c/(20n)$& 7.64e-4&  1940.70 & 6.06e-4  & 12    \cr
       &1e-2 &$c/(50n)$& 1.92e-3&  1047.03 &$c/(20n)$& 1.91e-3&  1580.49 & 1.47e-3  & 10    \cr
       &5e-2 &$c/(50n)$& 2.35e-2&  708.72  &$c/(20n)$& 2.34e-2&  1013.31 & 1.61e-2  & 6    \cr
    \hline
    $4$&1e-3 &$c/(60n)$& 1.03e-4&  2212.26 &$c/(30n)$& 8.38e-5&  3982.77 & 1.30e-5  & 10    \cr
       &5e-3 &$c/(60n)$& 4.65e-4&  1054.53 &$c/(30n)$& 4.24e-4&  2002.71 & 2.04e-4  & 7    \cr
       &1e-2 &$c/(60n)$& 1.29e-3&  941.19  &$c/(30n)$& 1.25e-3&  1782.99 & 6.42e-4  & 6    \cr
       &5e-2 &$c/(60n)$& 2.25e-2&  746.67  &$c/(30n)$& 2.26e-2&  1398.72 & 8.58e-3  & 3     \cr
    \bottomrule
    \end{tabular}
    \end{threeparttable}
\end{table}

First we comment on the SGD results. Clearly, for each fixed $\nu$, the mean squared error $e_{\rm sgd}$ (and
also $e_{\rm lm}$) decreases to zero as the noise level $\epsilon$ tends to zero, but it takes
more iterations to reach the optimal error, and the decay rate depends on
the regularity index $\nu$ roughly as the theoretical prediction $O(\delta^\frac{4\nu}{2\nu+1})$. The larger is
the regularity index $\nu$, the faster the error decays and the fewer iterations it needs in order to
reach the optimal error.  The results obtained by SGD with $\alpha=0$ and $\alpha=0.1$ are largely
comparable with each other, but generally the former imposes a more stringent condition on the initial
stepsize $c_0$ than the latter so as to achieve comparable accuracy. This is attributed to the fact that
polynomially decaying stepsize schedules have built-in variance reduction mechanism as the iteration
proceeds. Nonetheless, at the low-regularity index (indicated by $\nu=0$ in the table), the initial
stepsize can be taken independent of $n$. Next we compare the results of SGD with Landweber method. For all regularity indices, SGD,
with either constant or decaying stepsize schedule, can achieve an accuracy comparable with that by the Landweber
method, provided that the initial stepsize $c_0$ for SGD is taken to be of order $O(n^{-1})$. Generally, the
larger the index $\nu$ is, the smaller the value $c_0$ should be taken in the stepsize schedule, in
order to fully realize the benefit of smooth solutions. This observation agrees well with the observation
in Remark \ref{rem:ppg1}. These observations hold for all three examples, which have different degree of
ill-posedness. Thus they are fully in line with the convergence analysis in Section \ref{sec:conv}.

\begin{table}[htp!]
  \centering\small
  \begin{threeparttable}
  \caption{Comparison between SGD and LM for \texttt{s-shaw}.\label{tab:shaw}}
    \begin{tabular}{ccccccccccccccc}
    \toprule
    \multicolumn{2}{c}{Method}&
    \multicolumn{3}{c}{ SGD($\alpha=0$)}&\multicolumn{3}{c}{ SGD($\alpha=0.1$)}&\multicolumn{2}{c}{LM}\cr
    \cmidrule(lr){3-5} \cmidrule(lr){6-8}
    \cmidrule(lr){9-10}
    $\nu$& $\epsilon$ &$c_0$&$e_{\rm sgd}$&$k_{\rm sgd}$&$c_0$&$e_{\rm sgd}$&$k_{\rm sgd}$&$e_{\rm lm}$&$k_{\rm lm}$\\
    \midrule
    $0$&1e-3 &$ c   $& 2.81e-1  & 2704.92 &$  2c  $& 2.81e-1  & 5853.54 & 2.81e-1  & 760983 \cr
       &5e-3 &$ c   $& 5.37e-1  & 67.14   &$  2c  $& 5.33e-1  & 94.92   & 5.25e-1  & 18588  \cr
       &1e-2 &$ c   $& 7.08e-1  & 42.42   &$  2c  $& 6.98e-1  & 60.18   & 6.67e-1  & 12385  \cr
       &5e-2 &$ c   $& 3.91e0   & 10.59   &$  2c  $& 3.66e0   & 14.91   & 2.91e0   & 3392   \cr
    \hline
    $1$&1e-3 &$2c/n $& 1.21e-4  & 275.70  &$ 4c/n $& 1.26e-4  & 453.00  & 5.95e-5  & 144    \cr
       &5e-3 &$2c/n $& 1.45e-3  & 142.05  &$ 4c/n $& 1.48e-3  & 202.50  & 1.26e-3  & 71    \cr
       &1e-2 &$2c/n $& 5.75e-3  & 113.01  &$ 4c/n $& 5.62e-3  & 148.11  & 5.21e-3  & 54     \cr
       &5e-2 &$2c/n $& 1.51e-1  & 64.77   &$ 4c/n $& 1.54e-1  & 97.02   & 1.47e-1  & 36    \cr
    \hline
    $2$&1e-3 &$2c/n   $& 1.53e-4  & 255.27  &$ 4c/n  $& 1.29e-4  & 746.46  & 6.36e-5  & 50     \cr
       &5e-3 &$2c/n   $& 2.00e-3  & 84.60   &$ 4c/n  $& 1.73e-3  & 235.08  & 1.51e-3  & 37     \cr
       &1e-2 &$2c/n   $& 6.43e-3  & 64.77   &$ 4c/n  $& 6.05e-3  & 172.32  & 5.71e-3  & 30    \cr
       &5e-2 &$2c/n   $& 8.17e-2  & 11.88   &$ 4c/n  $& 8.00e-2  & 29.49   & 7.08e-2  & 5      \cr
    \hline
    $4$&1e-3 &$c/(30n)$&5.79e-5 & 1966.38 &$c/(10n)$&5.92e-5  & 2863.35 & 3.13e-5  & 9     \cr
       &5e-3 &$c/(30n)$&6.00e-4 & 941.19  &$c/(10n)$&6.06e-4  & 1116.81 & 3.71e-4  & 5      \cr
       &1e-2 &$c/(30n)$&1.99e-3 & 828.45  &$c/(10n)$&2.00e-3  & 1002.93 & 1.01e-3  & 4      \cr
       &5e-2 &$c/(30n)$&3.61e-2 & 645.75  &$c/(10n)$&3.61e-2  & 746.67  & 6.45e-3  & 1      \cr
    \bottomrule
    \end{tabular}
    \end{threeparttable}
\end{table}

In order to shed further insights into the convergence behavior of SGD, we present numerical results
with four different values of $c_0$ (i.e., $ \min(c,nc^*)$, 10$c^*$, $c^*$ and $\frac{c^*}{10}$, with
$c^*$ from the tables) in Figs. \ref{fig:err-NL0} and \ref{fig:err}, for the examples with $\nu=1$ and
exact and noisy data, respectively. In the case of exact data, the mean squared
error $e_{\rm sgd}$ consists of only approximation and stochastic errors, and it decreases to zero
as the iteration proceeds. With a large initial stepsize, the error $e_{\rm sgd}$ decreases fast during the initial
iterations, but only at a slow rate $O(k^{-(1-\alpha)})$, whereas with a small $c_0$, the initial decay
is much slower. The asymptotic decay rate matches the optimal decay $O(k^{-2\nu(1-\alpha)})$ only when
$c_0$ decreases to $O(n^{-1})$, which otherwise exhibits only a slower decay $O(k^{-\min(2\nu,1)(1-\alpha)})$
and thus an undesirable saturation phenomenon.  Note that for small $c_0$,
the asymptotic decay $O(k^{-2\nu(1-\alpha)})$ kicks in only after a sufficient number of iterations,
which agrees with the condition $h_0(k)\leq \frac12$ etc in the analysis. Further, there is
an interesting transition layer for medium $c_0$ (but still of order $O(n^{-1})$), for which it first
exhibits the desired asymptotic decay, and then shifts back to a slower decay rate eventually. The
presence of the wide transition region indicates that the optimal convergence can still be achieved
for noisy data, even if the employed $c_0$ is larger than the critical value suggested by the theoretical
analysis in Section \ref{sec:conv}. These observations hold for both constant and polynomially decaying stepsize schedules. These numerical
results show that a small initial stepsize $c_0$ is necessary for overcoming the saturation phenomenon of SGD.

These empirical observations remain largely valid also for noisy data in Fig. \ref{fig:err}. It is observed that the asymptotic decay rate
is higher for smaller initial stepsizes, but now only up to a certain iteration number, due to the presence
of the propagation error, which increases monotonically as the iteration proceeds and eventually dominates
the total error. This leads to the familiar semi-convergence behavior in the second and third columns of
Fig. \ref{fig:err}. The proper balance between the decaying approximation error and the increasing
propagation error determines the attainable accuracy. One clearly observes that the larger is $c_0$,
the faster the asymptotic decay kicks in, but also the quicker the SGD iterate starts to diverge, which
can compromise greatly the attainable accuracy along the trajectory, leading to the undesirable
saturation phenomenon. When the initial stepsize $c_0$ becomes smaller, { the attainable accuracy improves
steadily}. In particular, with a sufficiently small $c_0$, the attained error is optimal (but
of course at the expense of a much increased computational complexity). This observation naturally leads to
the important question whether it is possible to design novel stepsize schedules (possibly not of polynomially decaying
type) that enjoy both fast pre-asymptotic and asymptotic convergence behavior.

\begin{figure}
  \centering
  \setlength{\tabcolsep}{4pt}
  \begin{tabular}{ccc}
\includegraphics[width=0.31\textwidth,trim={1.5cm 0 0.5cm 0.5cm}]{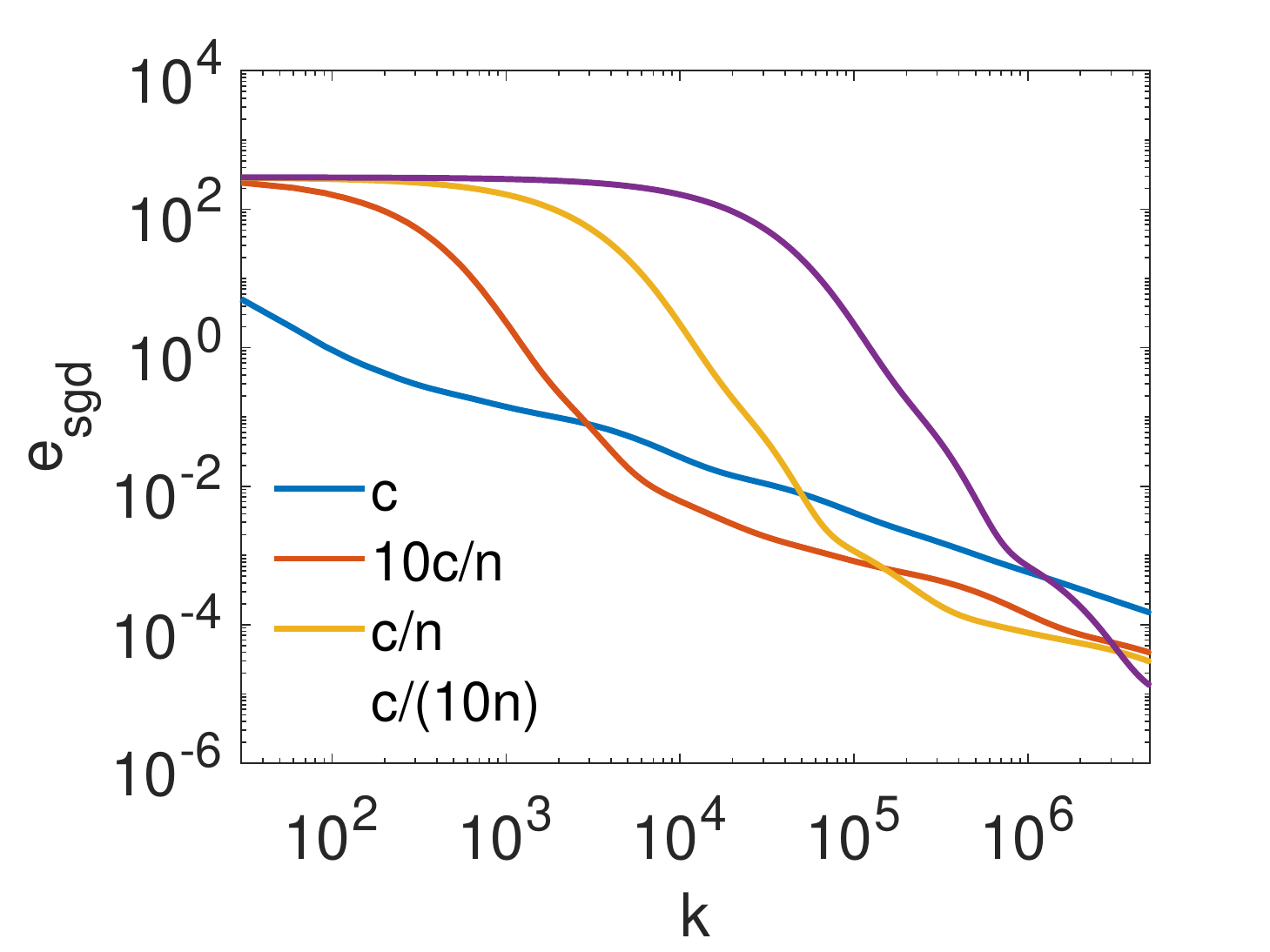}&
\includegraphics[width=0.31\textwidth,trim={1.5cm 0 0.5cm 0.5cm}]{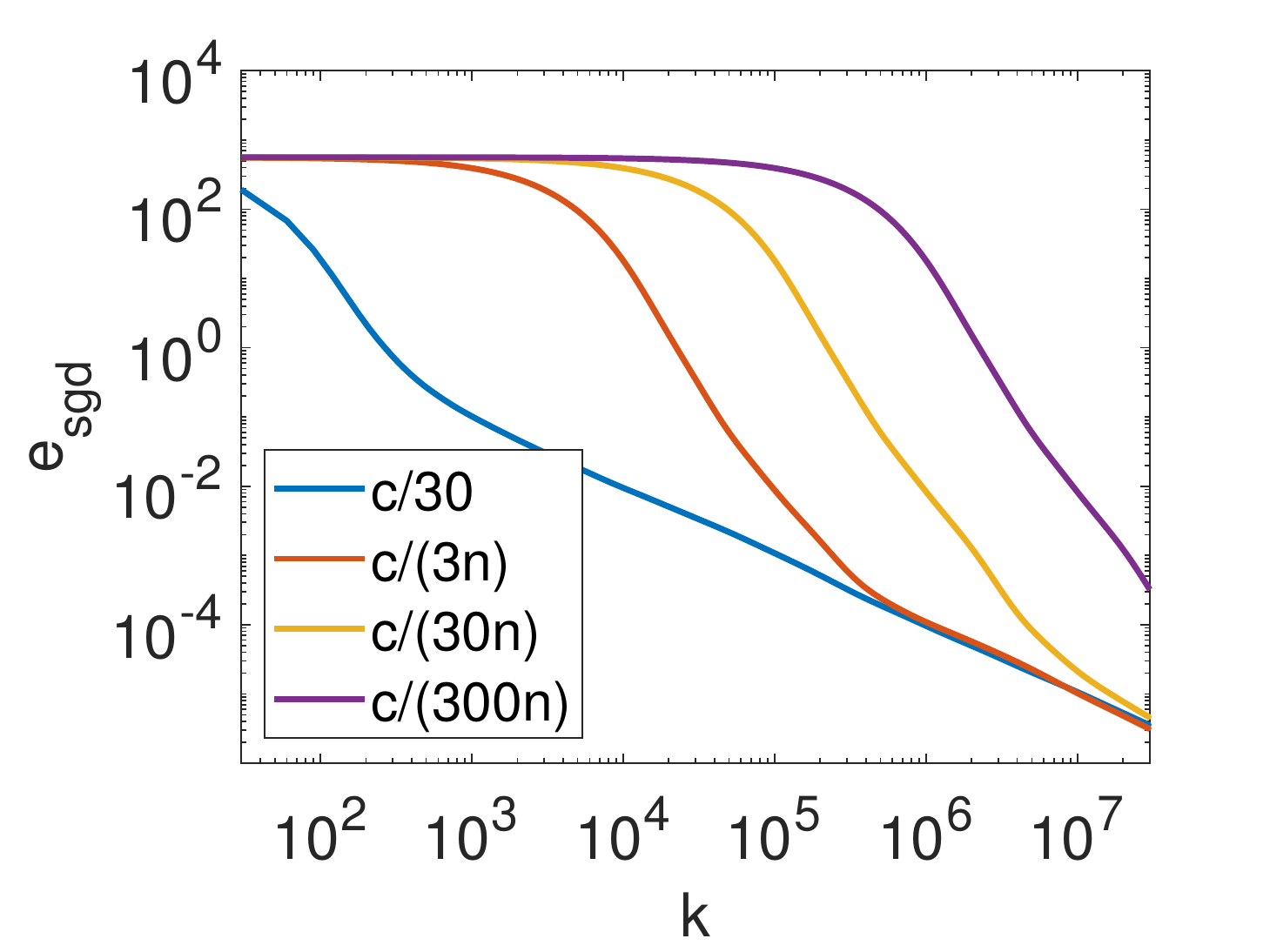}&
\includegraphics[width=0.31\textwidth,trim={1.5cm 0 0.5cm 0.5cm}]{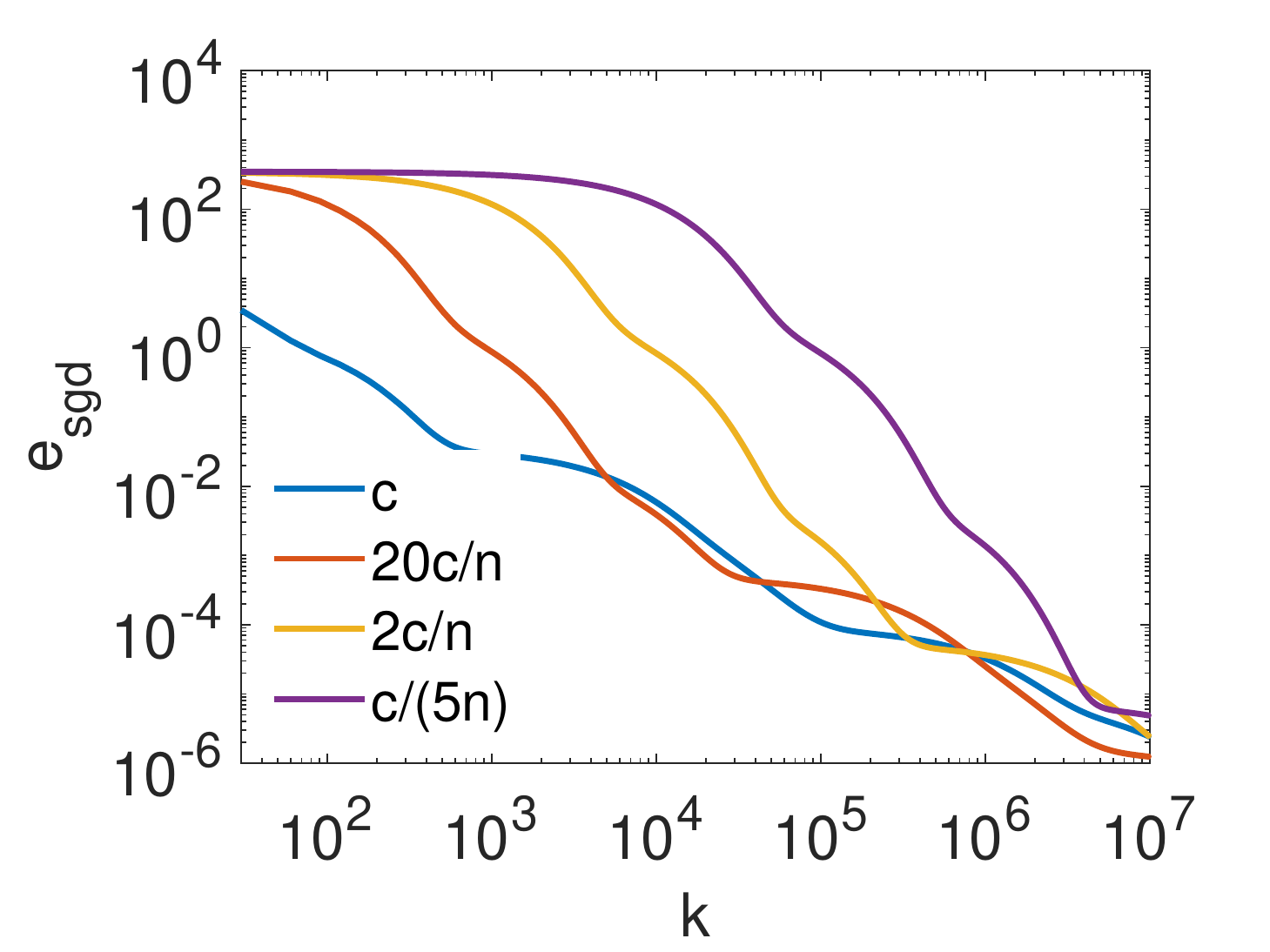}\\
\includegraphics[width=0.31\textwidth,trim={1.5cm 0 0.5cm 0.5cm}]{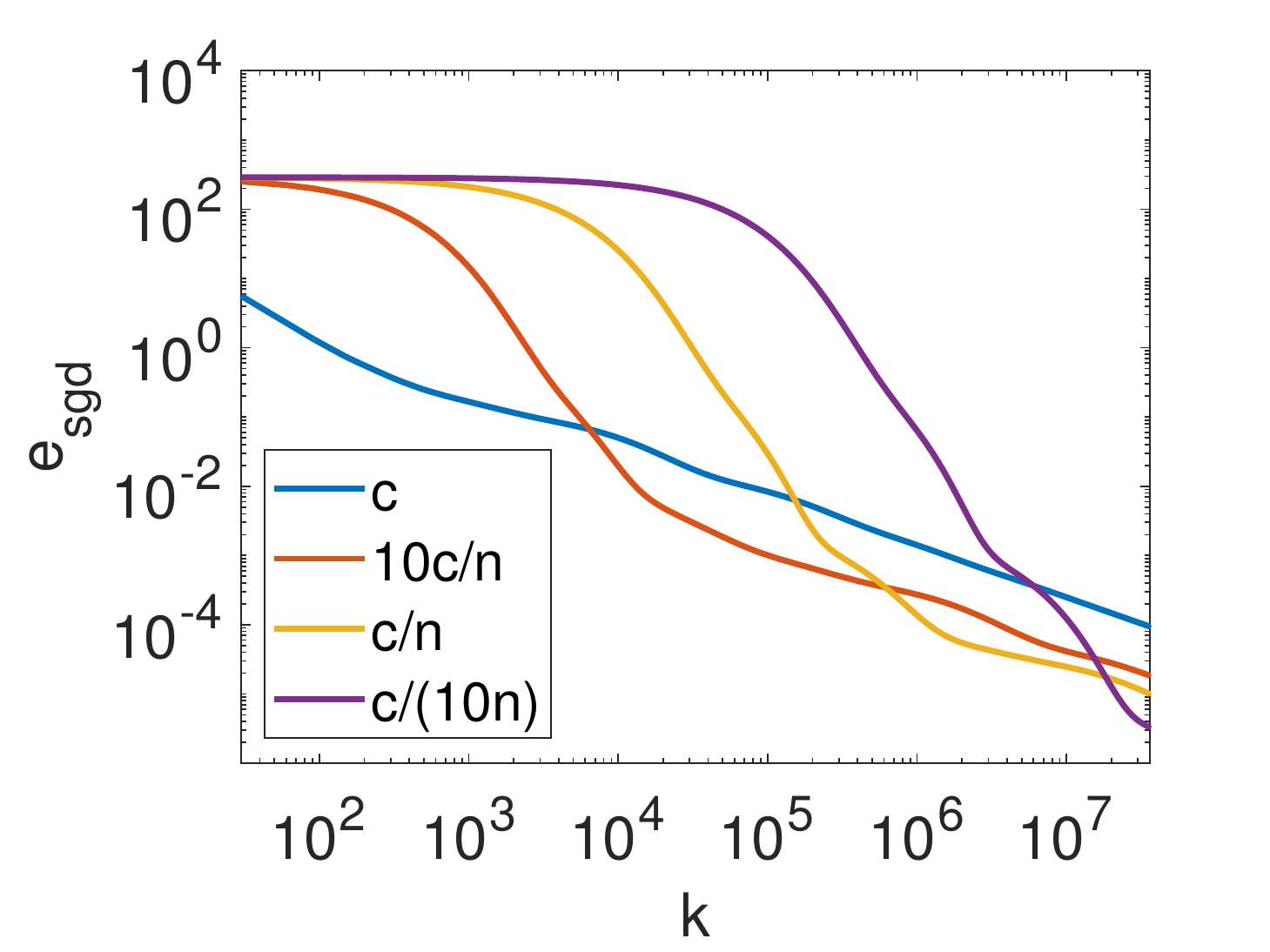}&
\includegraphics[width=0.31\textwidth,trim={1.5cm 0 0.5cm 0.5cm}]{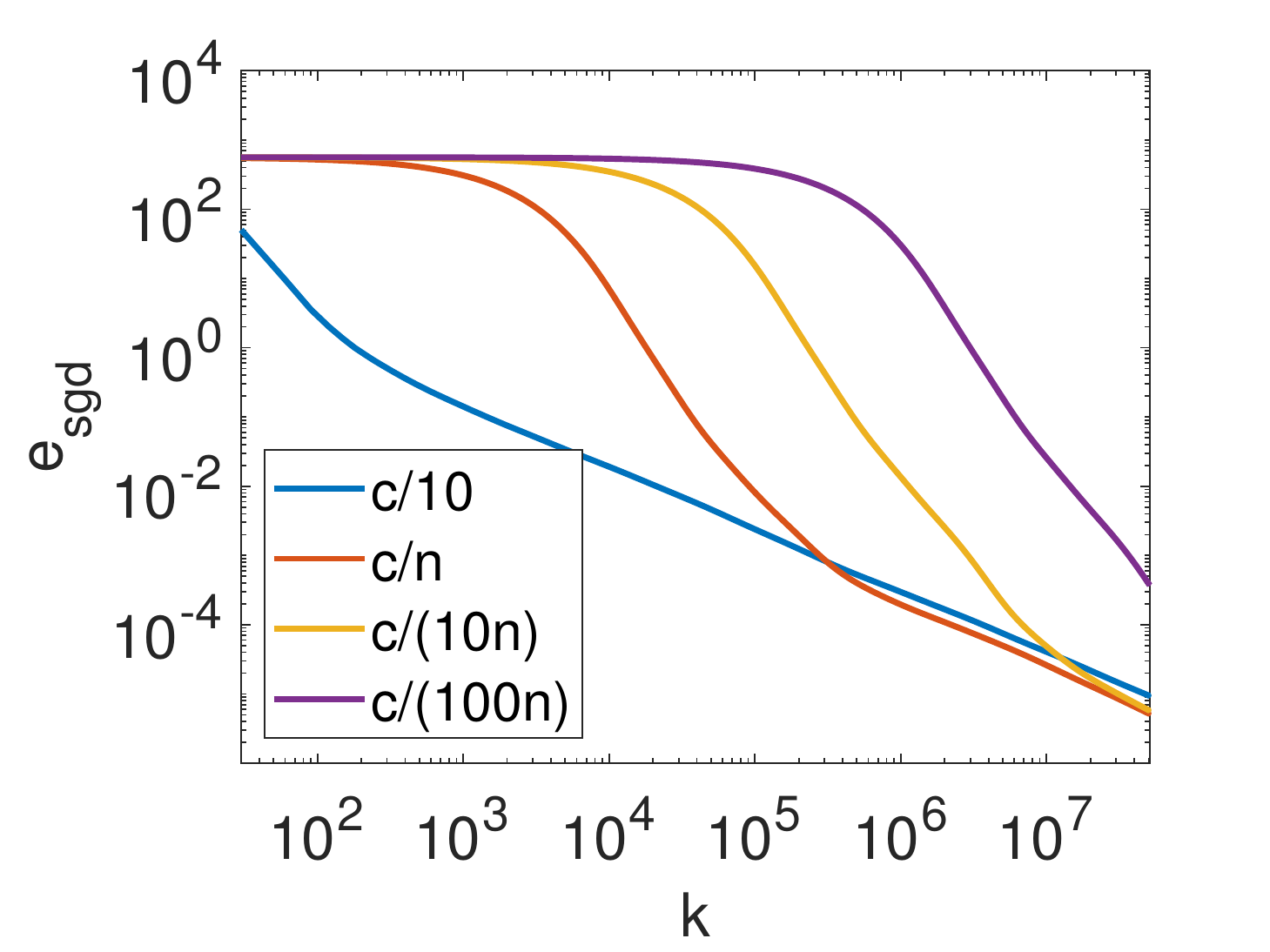}&
\includegraphics[width=0.31\textwidth,trim={1.5cm 0 0.5cm 0.5cm}]{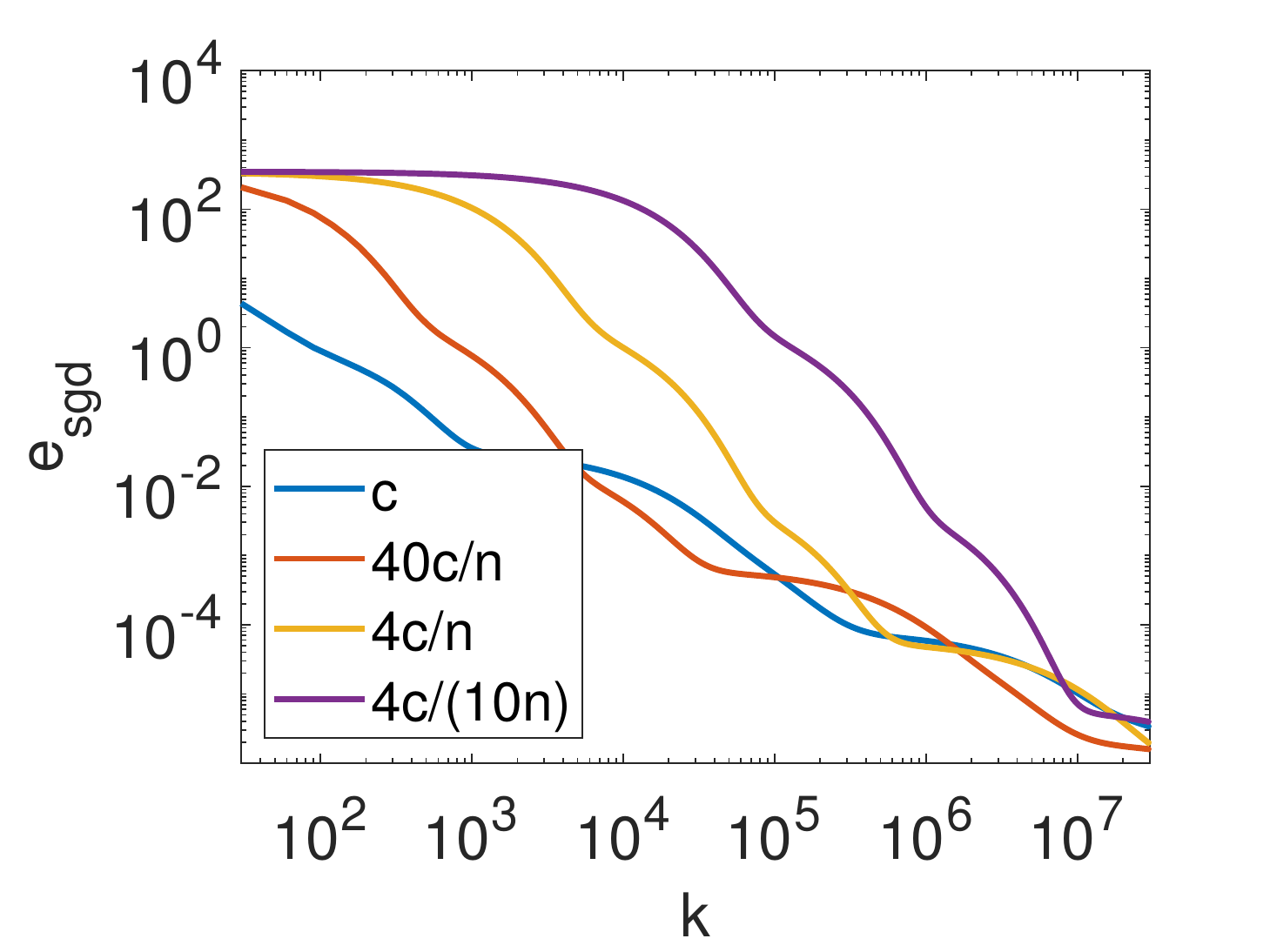}\\
\texttt{s-phillips}& \texttt{s-gravity} & \texttt{s-shaw}
\end{tabular}
\caption{The convergence trajectory of the SGD error with different initial stepsize $c_0$ for the examples
with $\nu=1$. The top and bottom rows are for $\alpha=0$ and $\alpha=0.1$, respectively. \label{fig:err-NL0}}
\end{figure}

\begin{figure}[hbt!]
\centering
  \setlength{\tabcolsep}{4pt}
\begin{tabular}{ccc}
\includegraphics[width=0.31\textwidth,trim={1.5cm 0 0.5cm 0.5cm}]{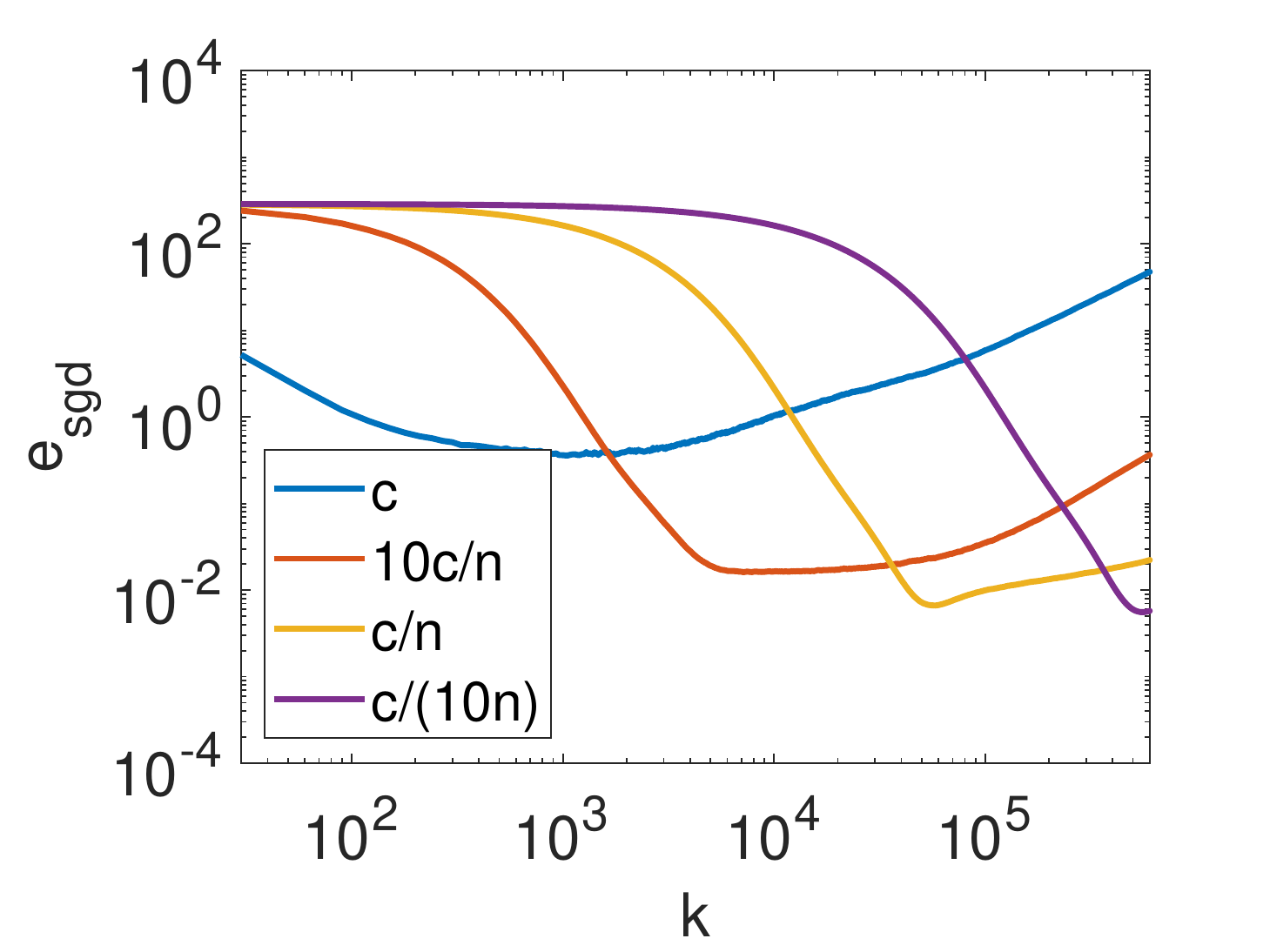}&
\includegraphics[width=0.31\textwidth,trim={1.5cm 0 0.5cm 0.5cm}]{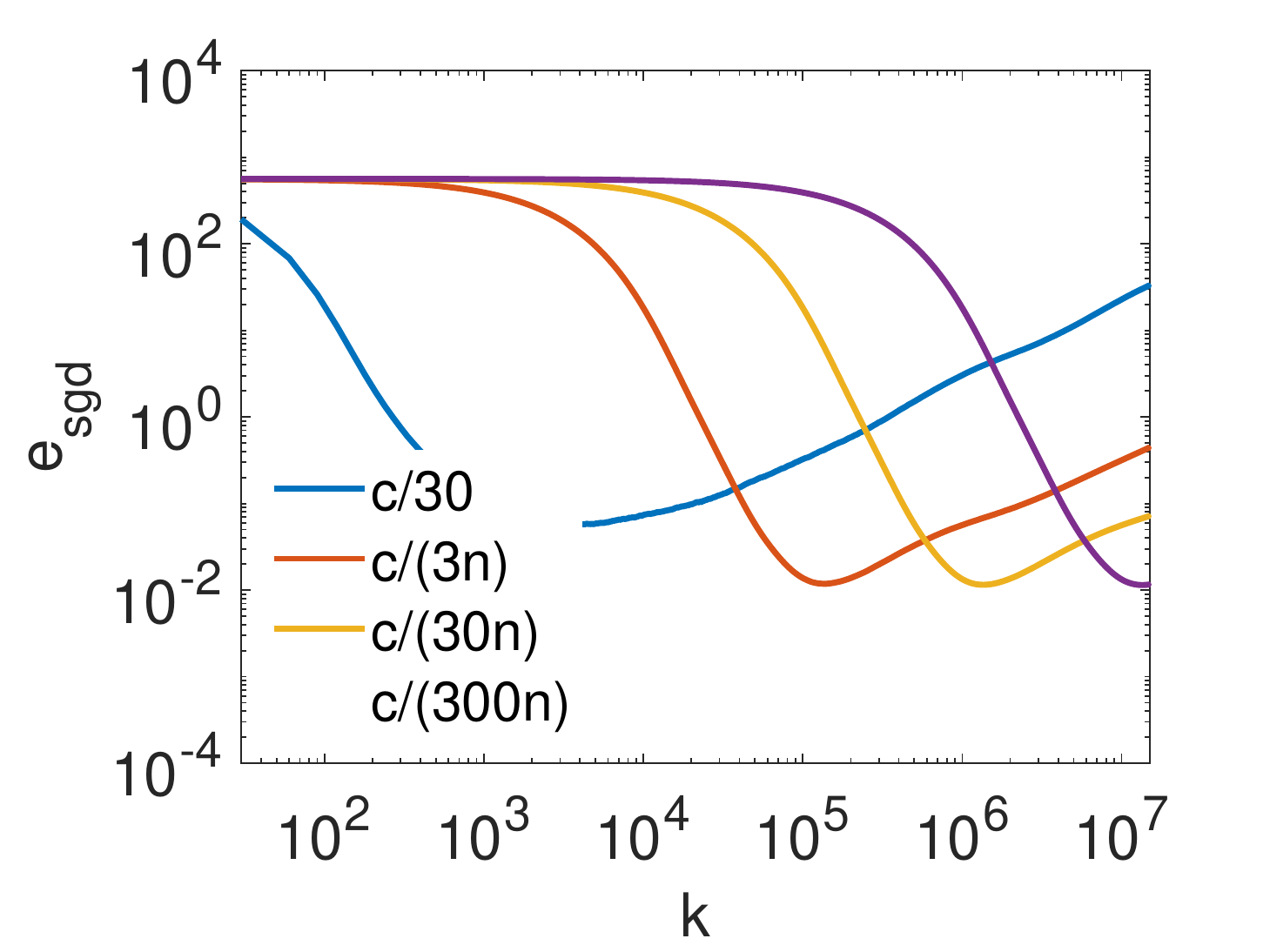}&
\includegraphics[width=0.31\textwidth,trim={1.5cm 0 0.5cm 0.5cm}]{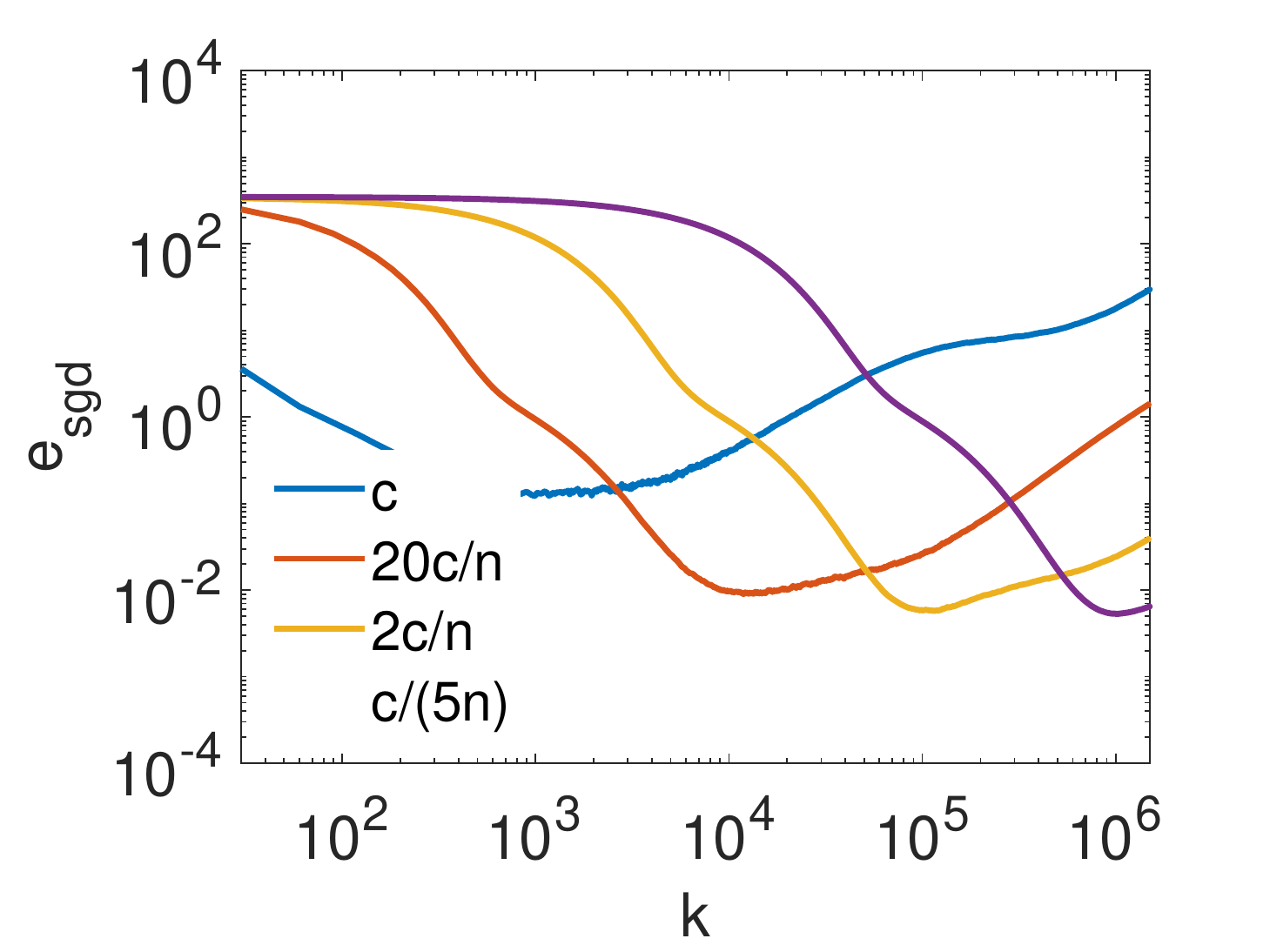}\\
\includegraphics[width=0.31\textwidth,trim={1.5cm 0 0.5cm 0.5cm}]{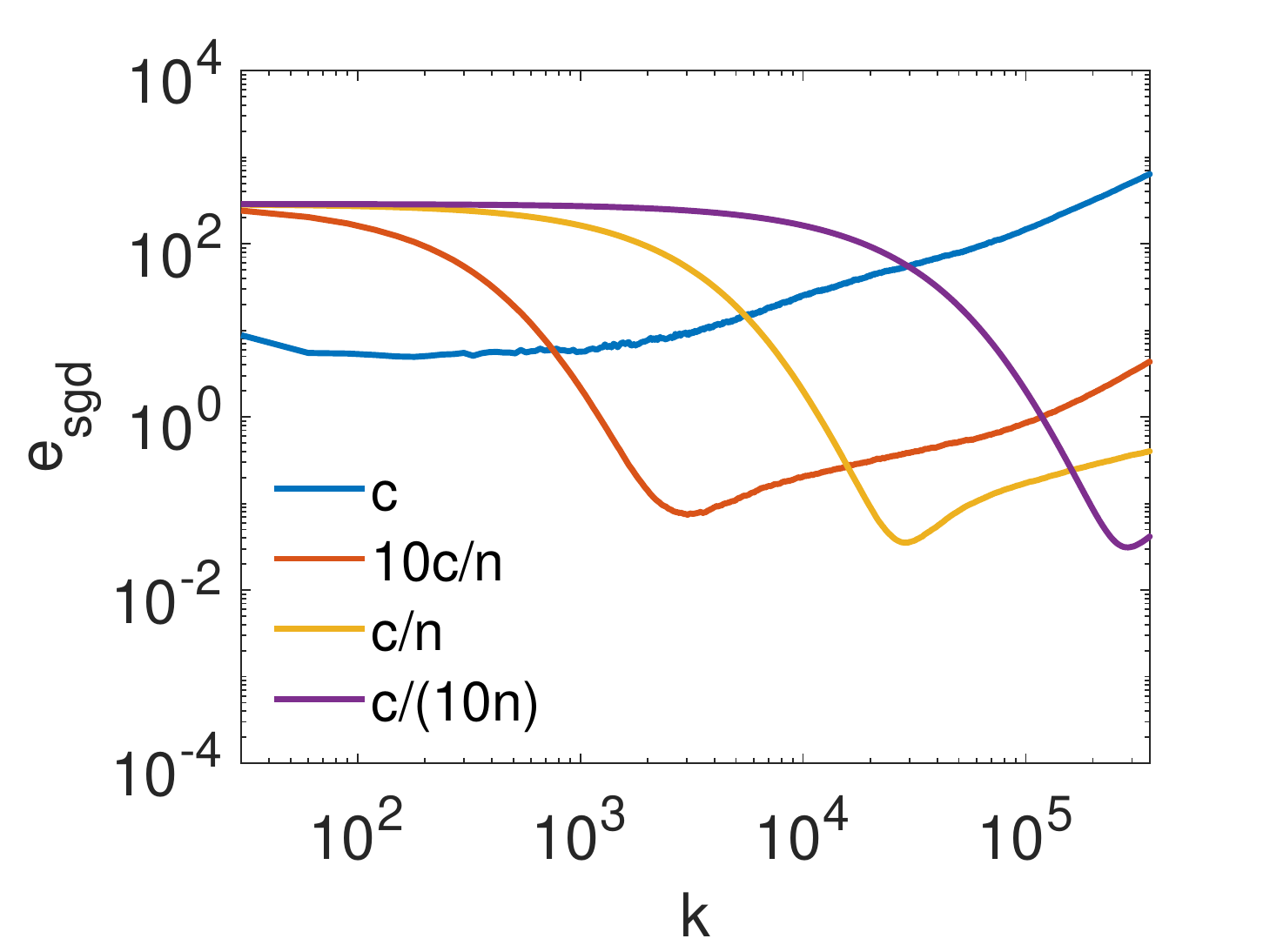}&
\includegraphics[width=0.31\textwidth,trim={1.5cm 0 0.5cm 0.5cm}]{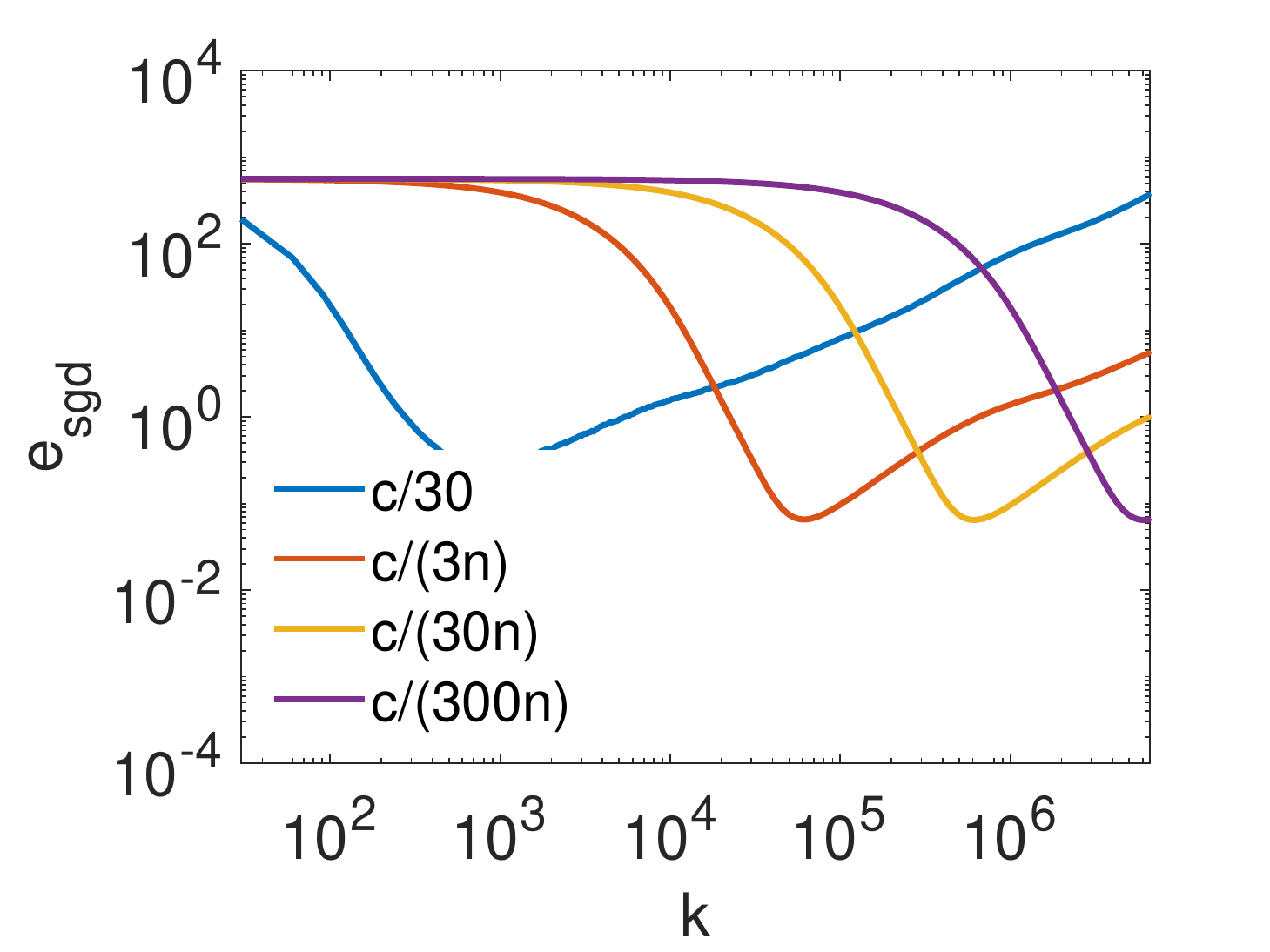}&
\includegraphics[width=0.31\textwidth,trim={1.5cm 0 0.5cm 0.5cm}]{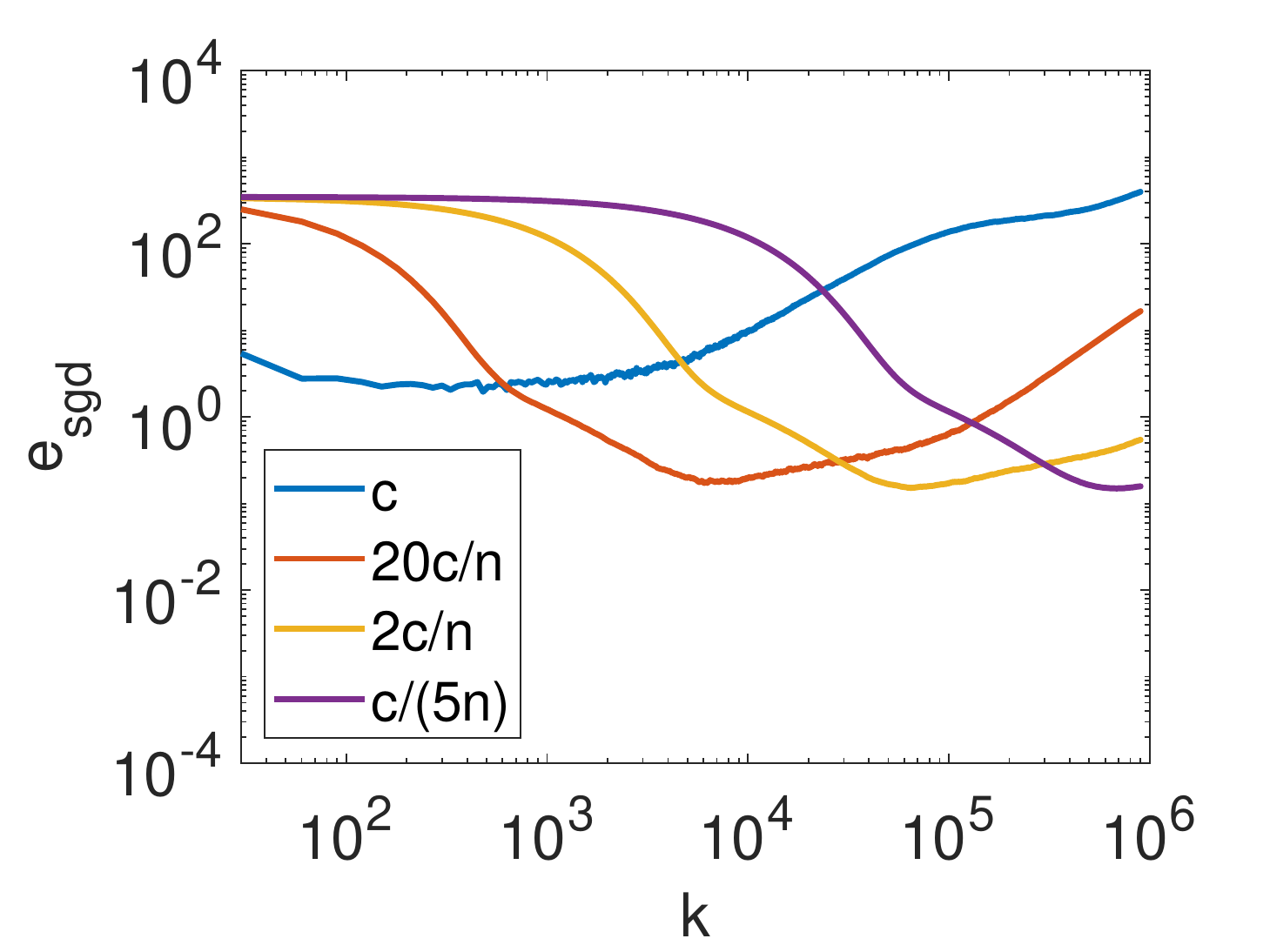}\\
\texttt{s-phillips}& \texttt{s-gravity} & \texttt{s-shaw}
\end{tabular}
\caption{The convergence trajectory of the SGD error (with $\alpha=0$) with different initial stepsize $c_0$ for the examples with
$\nu=1$. The top and bottom rows are for $\epsilon=$1e-2 and $\epsilon$=5e-2, respectively.\label{fig:err}}
\end{figure}

\subsection{On Assumption \ref{ass:stepsize}(iii)}\label{sec:condition}
\begin{table}[htp!]
  \centering\small
  \begin{threeparttable}
  \caption{Comparison between SGD with $\alpha=0$ for \texttt{s-phillips} with $A$ and $\tilde A$.\label{tab:phil_UA}}
    \begin{tabular}{cccccccccccc}
    \toprule
    \multicolumn{3}{c}{Method}&
    \multicolumn{2}{c}{ SGD with $A$}& \multicolumn{2}{c}{ SGD with $\tilde A$}&\\
    \cmidrule(lr){4-5} \cmidrule(lr){6-7}
    $\nu$& $\epsilon$ &$c_0$&$e$&$k$&$e$&$k$&\\
    \midrule
    $0$&1e-3 & $4c/n$  & 1.66e-2 &  4691.28 & 1.65e-2 & 4738.4  \cr
       &5e-3 & $4c/n$  & 9.35e-2 &  782.10  & 9.28e-2 & 835.35   \cr
       &1e-2 & $4c/n$  & 1.29e-1 &  204.90 & 1.28e-1 & 198.75 \cr
       &5e-2 & $4c/n$  & 5.42e-1 &  108.90  & 5.40e-1 & 111.85 \cr
    \hline
    $1$&1e-3 & $c/n$   & 3.48e-4 &  539.19  & 2.29e-4 & 507.55\cr
       &5e-3 & $c/n$   & 3.69e-3 &  73.44  & 2.87e-3 &  71.2   \cr
       &1e-2 & $c/n$   & 6.64e-3 &  57.81  & 5.72e-3 & 57.75 \cr
       &5e-2 & $c/n$   & 3.52e-2 &  29.40  & 3.84e-2 & 30.4 \cr
    \hline
    $2$&1e-3 &$c/(30n)$& 7.02e-5 &  2115.54 & 3.49e-5& 2021.6\cr
       &5e-3 &$c/(30n)$& 4.47e-4 &  1197.48 & 3.66e-4 & 1186.10\cr
       &1e-2 &$c/(30n)$& 1.09e-3 &  938.70 & 9.90e-4 & 934.75\cr
       &5e-2 &$c/(30n)$& 2.92e-2 &  636.51  & 2.94e-2& 639.60 \cr
    \hline
    $4$&1e-3 &$c/(30n)$& 9.77e-5 &  1966.38 & 2.49e-5& 1103.00 \cr
       &5e-3 &$c/(30n)$& 7.55e-4 &  879.51 & 6.34e-4& 869.40 \cr
       &1e-2 &$c/(30n)$& 2.56e-3 &  785.94  & 2.43e-3& 781.80\cr
       &5e-2 &$c/(30n)$& 5.23e-2 &  596.73  & 5.24e-2& 597.60\cr
    \bottomrule
    \end{tabular}
    \end{threeparttable}
\end{table}

{
The convergence analysis in Section \ref{sec:conv} requires Assumption \ref{ass:stepsize}(iii). This appears largely to be
a limitation of the analysis technique. To illustrate
this, we compare the results between the systems with a general matrix $A$ and with one that
satisfies  Assumption \ref{ass:stepsize}(iii). The latter can be constructed from the former as follows.
Let $A=U\Sigma V^t$ be the singular value decomposition of $A$. Then we replace $A$ by
$\tilde A = U^tA$ and $y^\delta$ by $\tilde y^\delta = U^ty^\delta$ so that $\tilde A$ satisfies
Assumption \ref{ass:stepsize}(ii)--(iii). The numerical results for \texttt{s-phillips} are shown in
Table \ref{tab:phil_UA}. It is observed that the results obtained by SGD with $A$ and $\tilde A$ are
largely comparable with each other for all the noise levels and smooth indices, {especially when the
amount of the data noise is not too small.} Although not presented, the observations are identical
for other examples, including multiplying the matrix $A$ by an arbitrary orthonormal matrix so
long as $c_0$ is sufficiently small. These observations are also confirmed by the corresponding
trajectories: The trajectory of the mean squared error for the three examples with $\nu=1$ for $A$ and
$\tilde A$ nearly overlay each other {when the data is not too small (as in most practical inverse
problems)} (cf. Fig. \ref{fig:err_UA}). For exact data, the trajectory overlaps up to a certain point
around 1e-4 (which depends on the value of $c_0$), and the value of levelling off is observed to
further decrease by choosing smaller $c_0$. One interesting open question is thus to establish the saturation-overcoming
phenomenon without Assumption \ref{ass:stepsize}(iii), as the experimental results suggest.}


\begin{figure}[hbt!]
\centering
  \setlength{\tabcolsep}{4pt}
\begin{tabular}{ccc}
\includegraphics[width=0.31\textwidth,trim={1.5cm 0 0.5cm 0.5cm}]{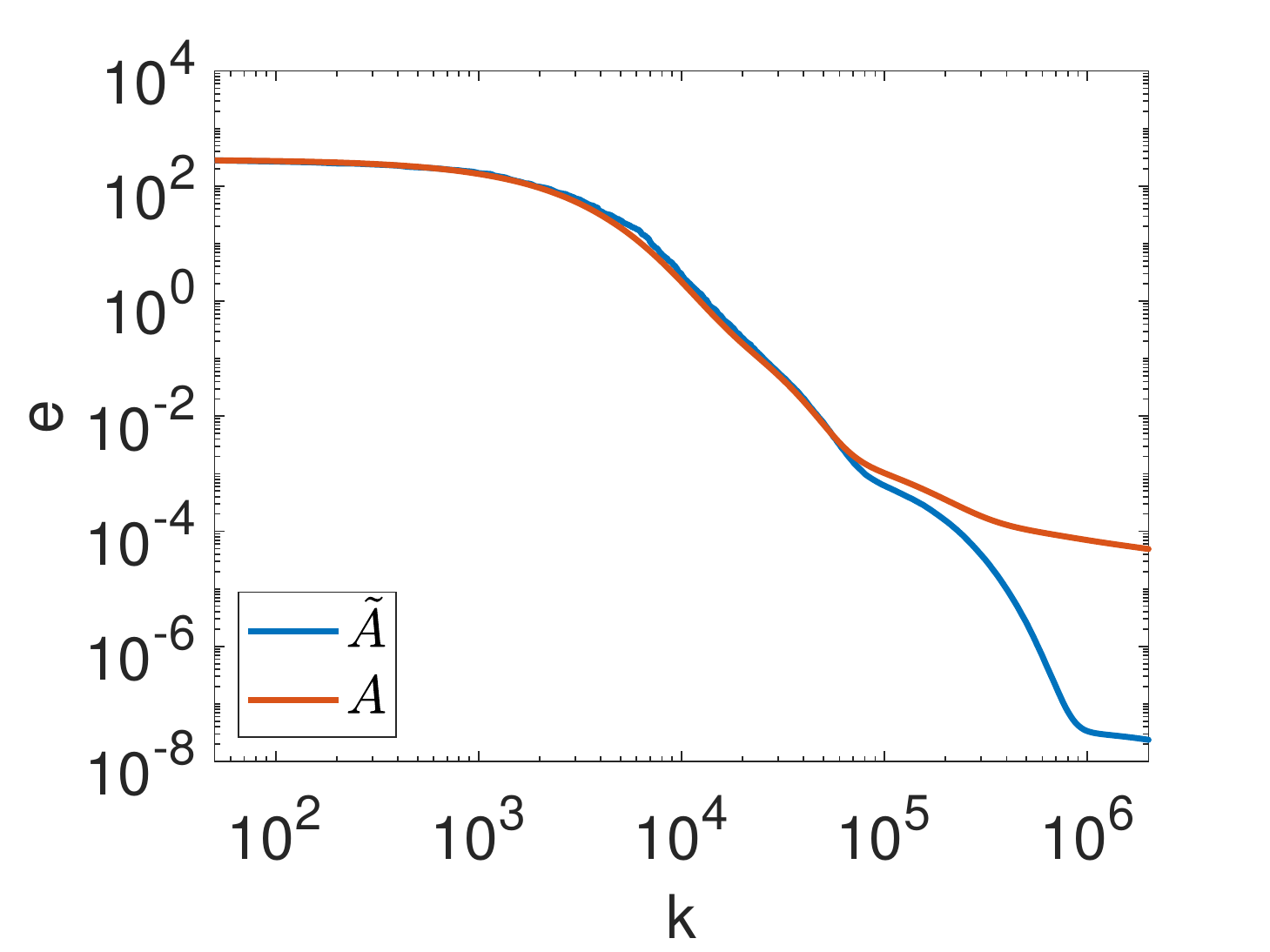}&
\includegraphics[width=0.31\textwidth,trim={1.5cm 0 0.5cm 0.5cm}]{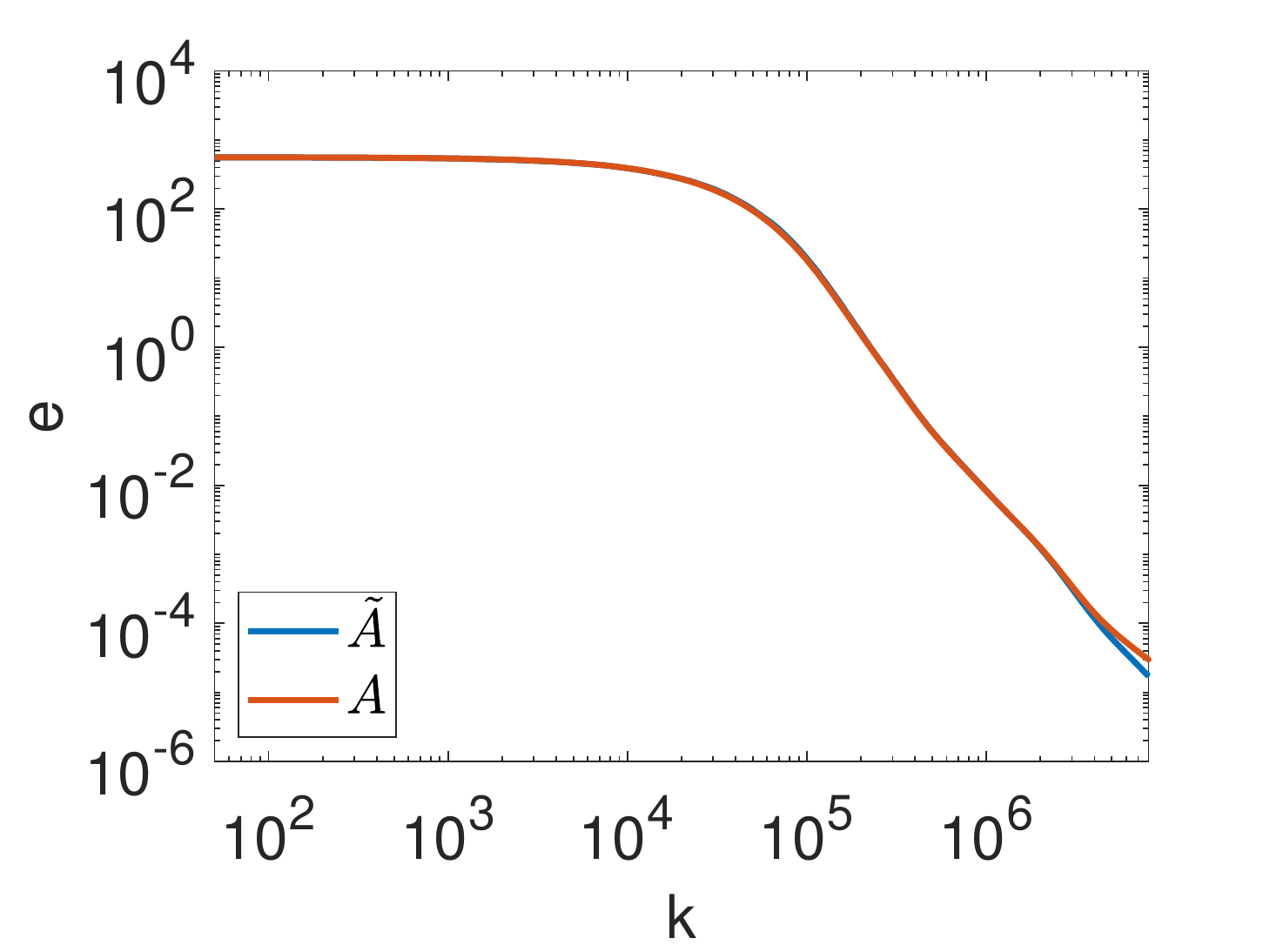}&
\includegraphics[width=0.31\textwidth,trim={1.5cm 0 0.5cm 0.5cm}]{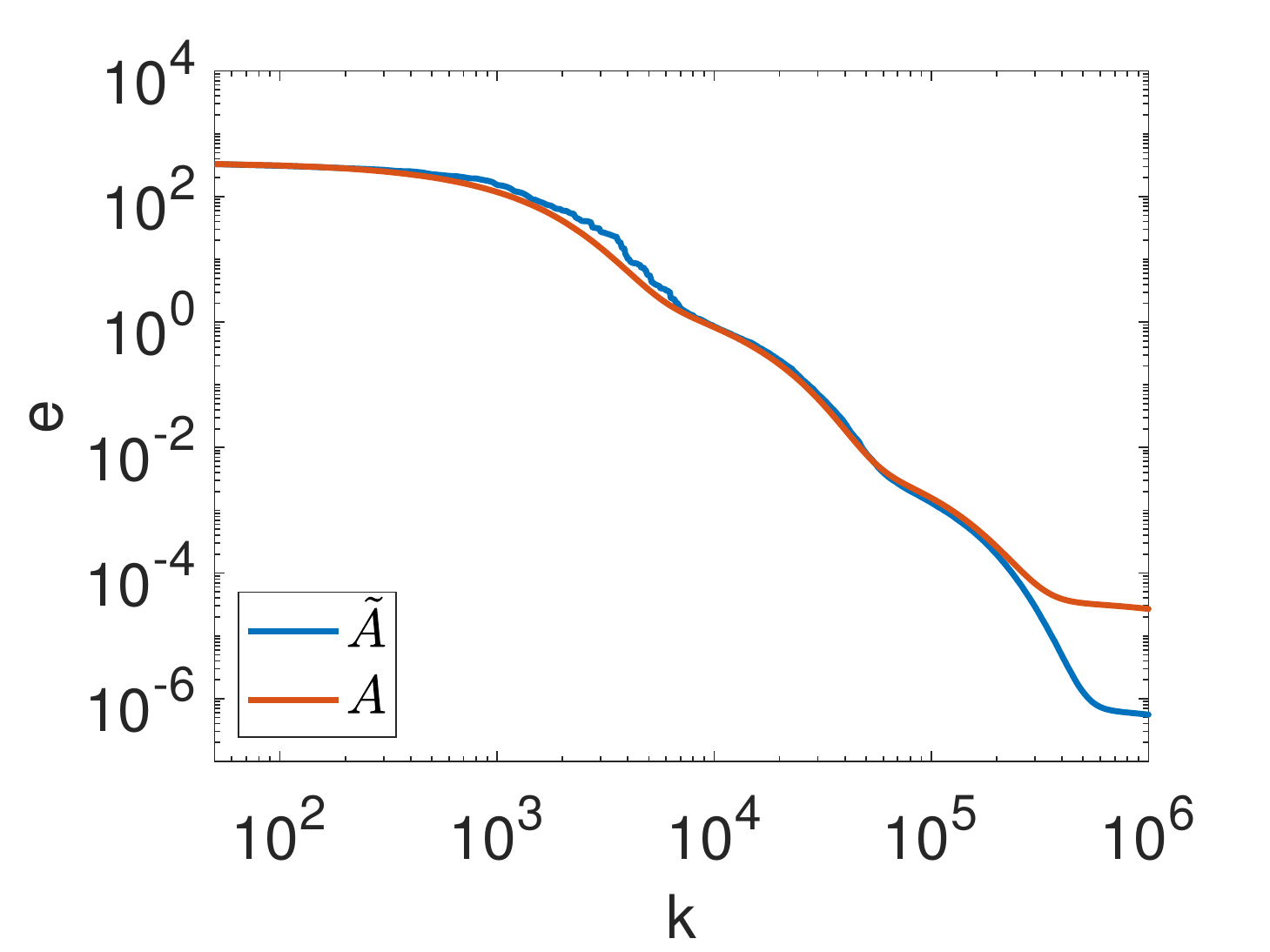}\\
\includegraphics[width=0.31\textwidth,trim={1.5cm 0 0.5cm 0.5cm}]{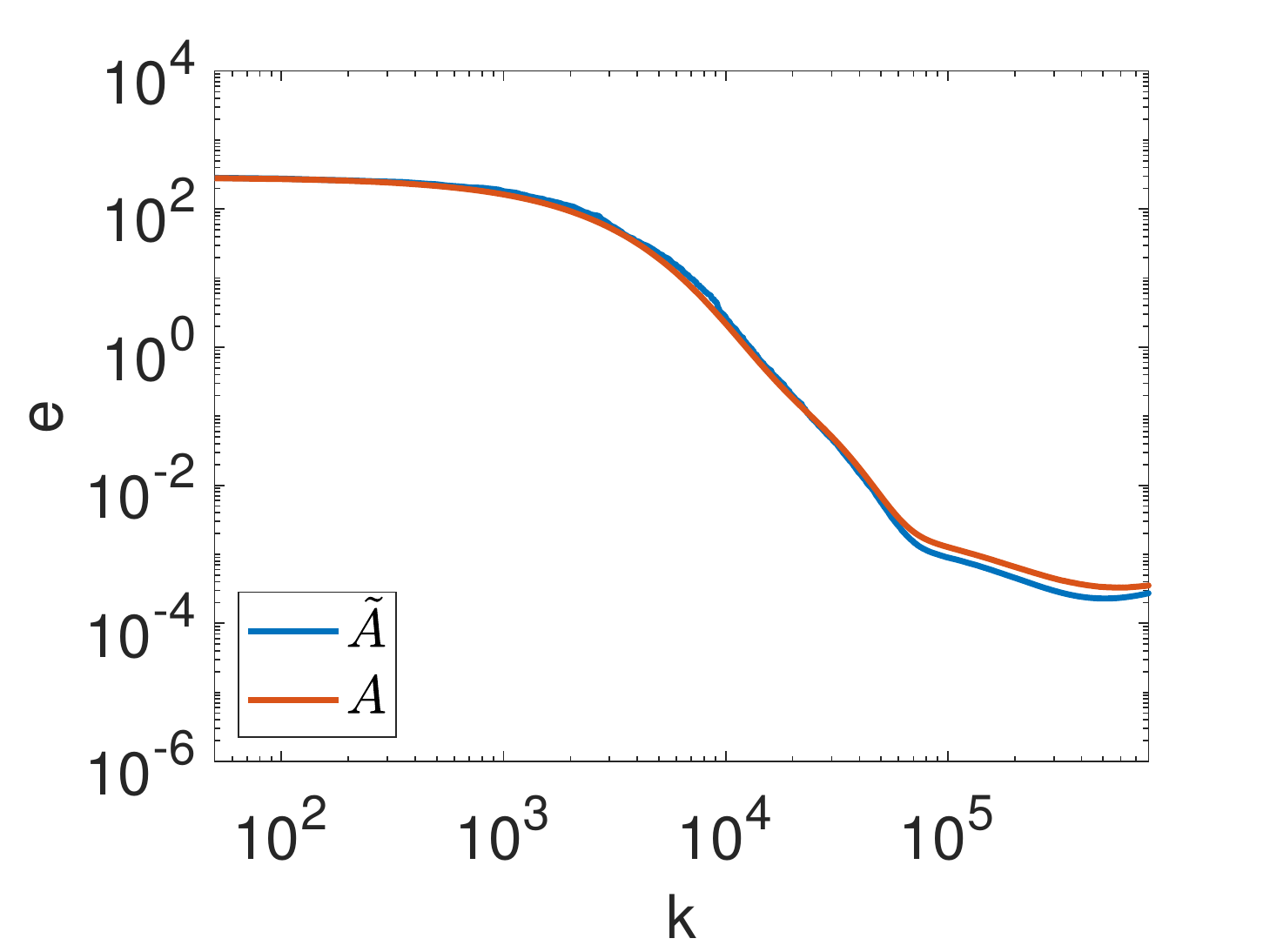}&
\includegraphics[width=0.31\textwidth,trim={1.5cm 0 0.5cm 0.5cm}]{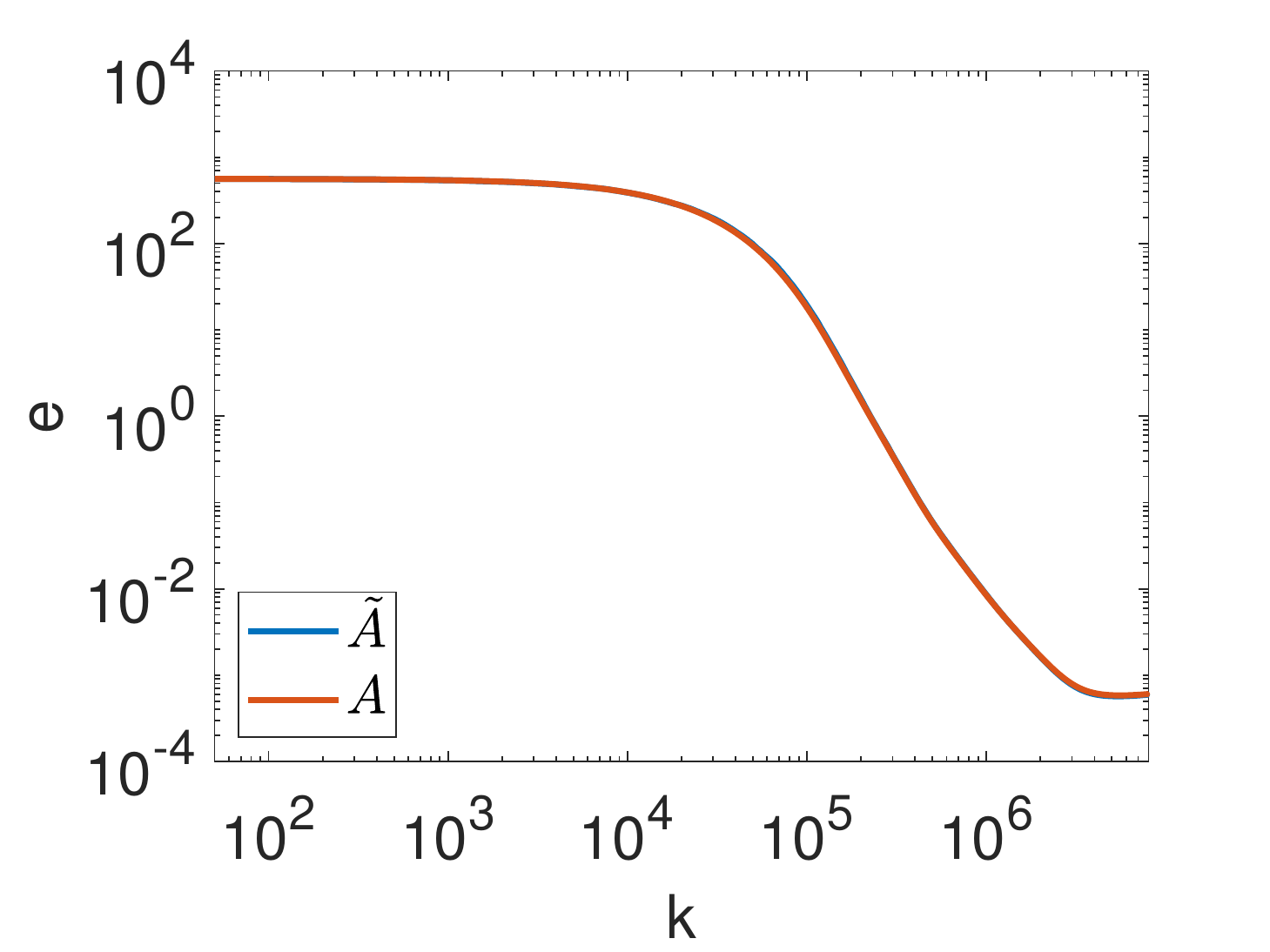}&
\includegraphics[width=0.31\textwidth,trim={1.5cm 0 0.5cm 0.5cm}]{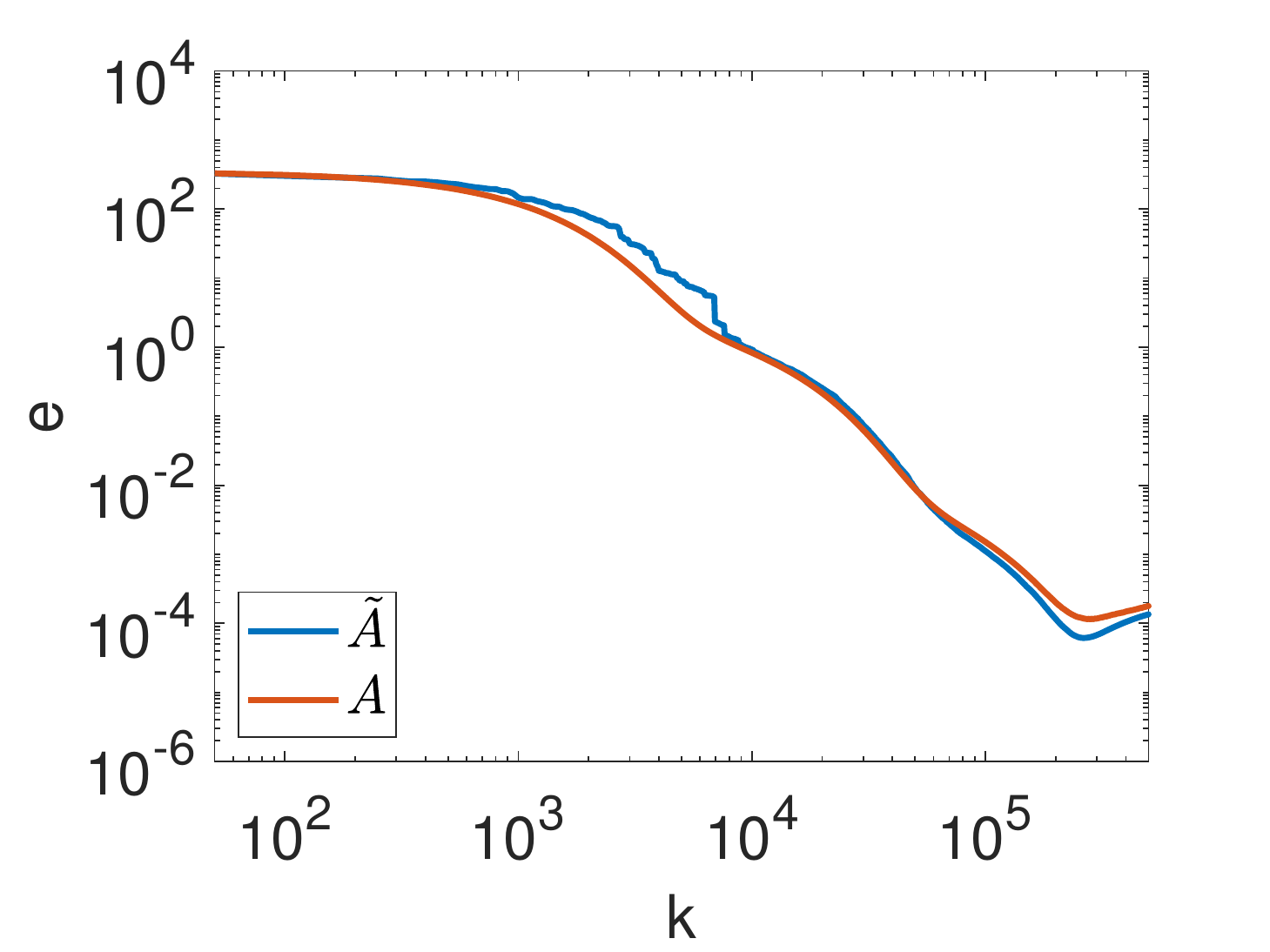}\\
\includegraphics[width=0.31\textwidth,trim={1.5cm 0 0.5cm 0.5cm}]{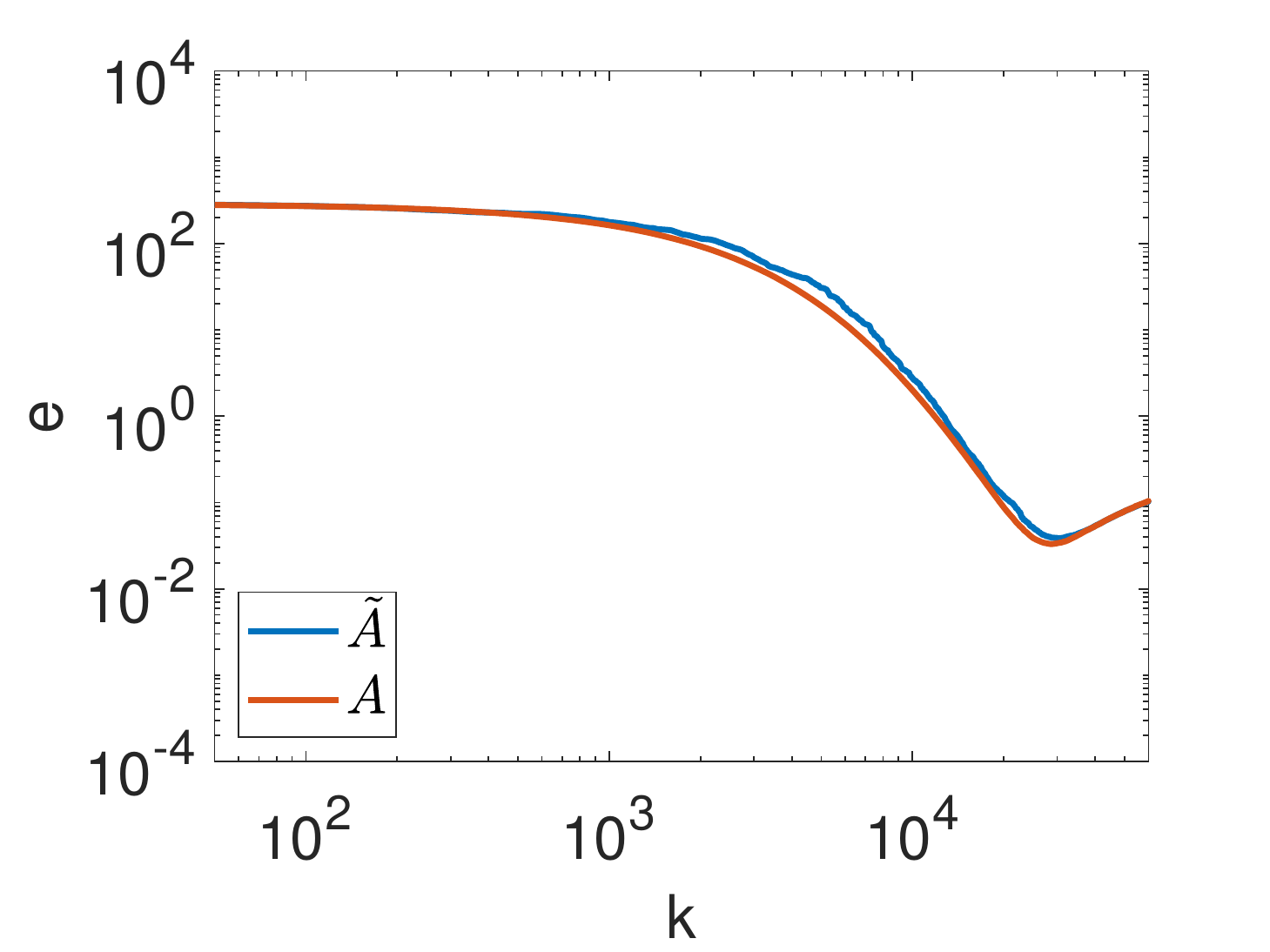}&
\includegraphics[width=0.31\textwidth,trim={1.5cm 0 0.5cm 0.5cm}]{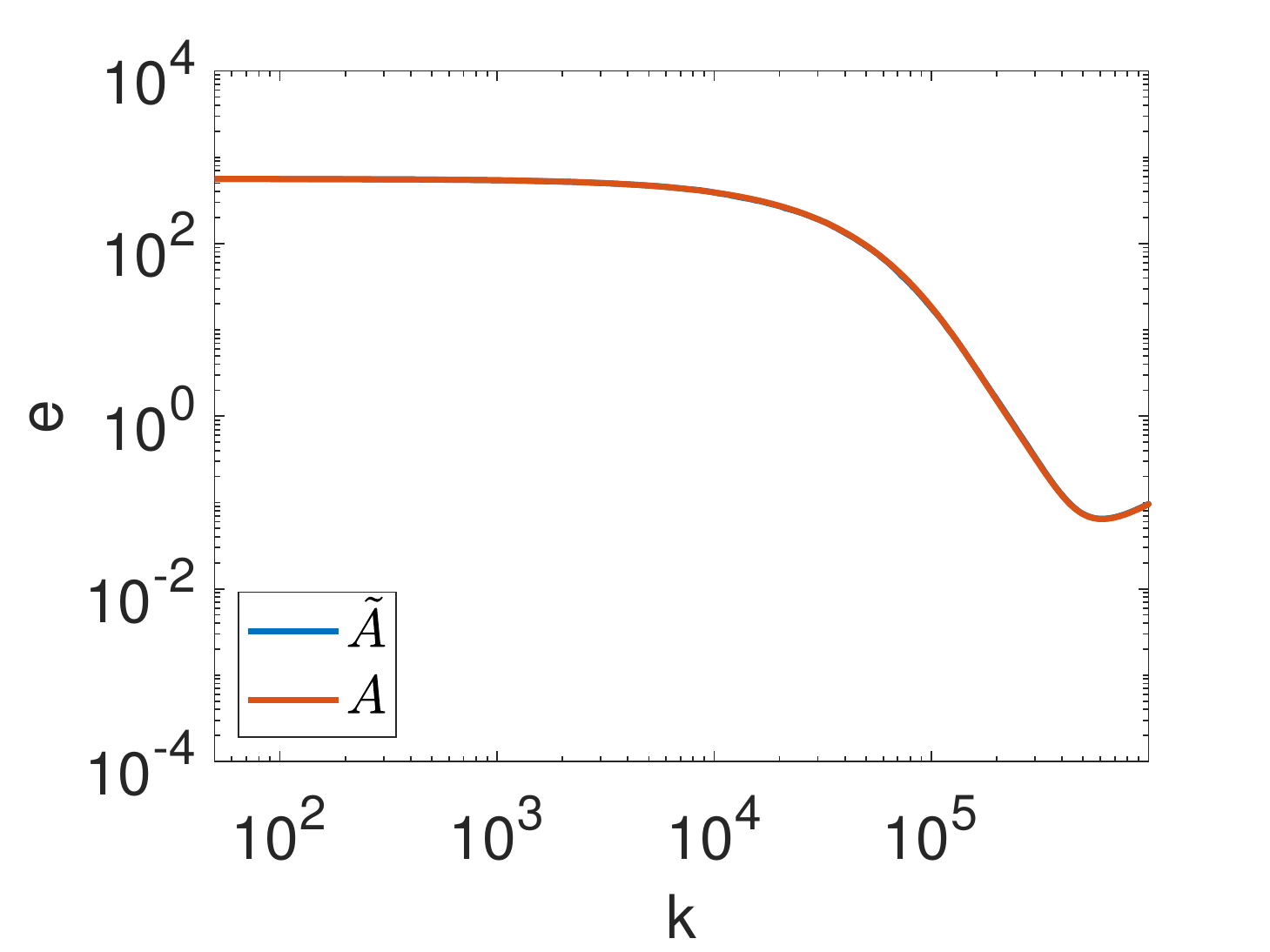}&
\includegraphics[width=0.31\textwidth,trim={1.5cm 0 0.5cm 0.5cm}]{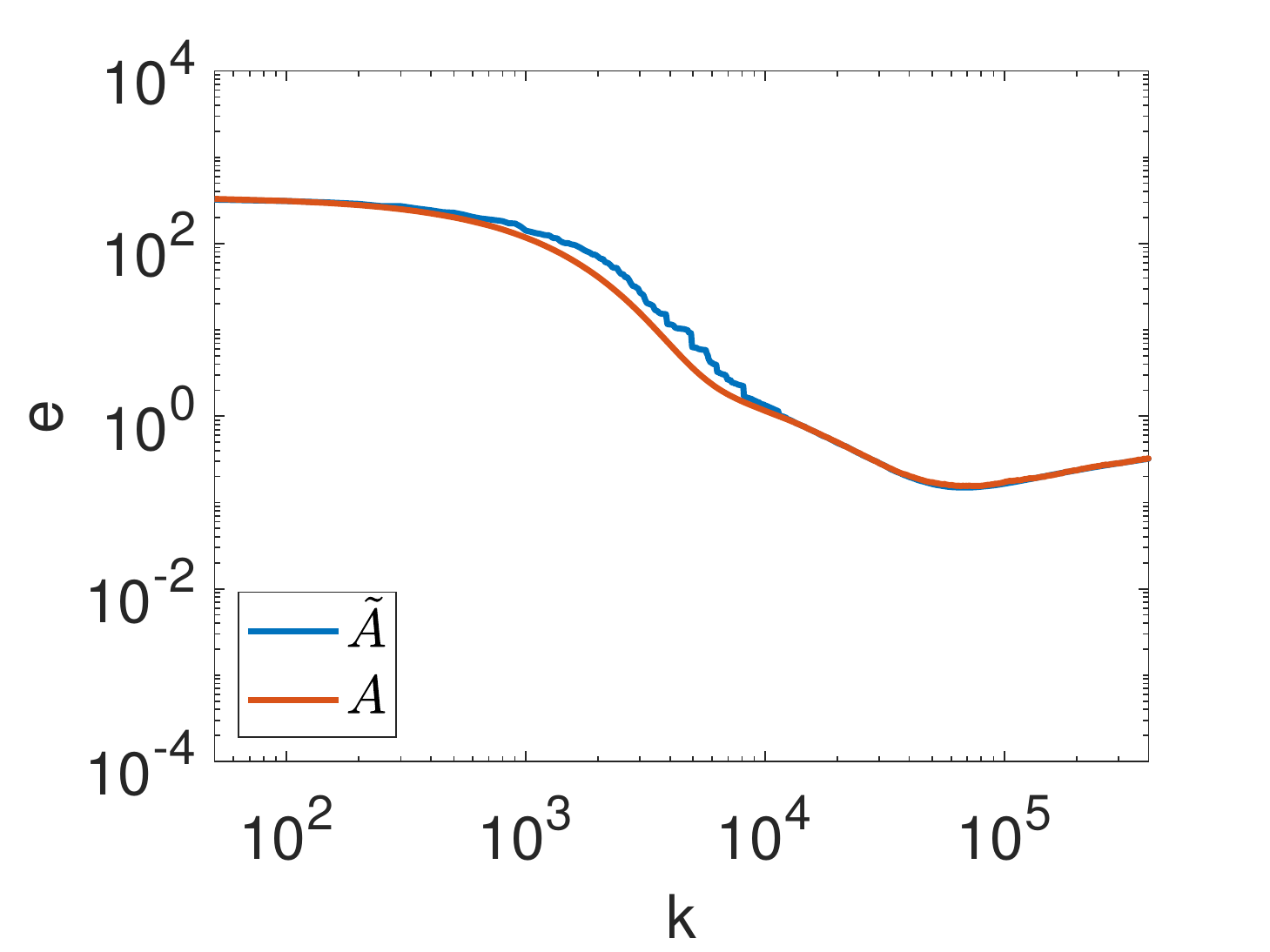}\\
\texttt{s-phillips}& \texttt{s-gravity} & \texttt{s-shaw}
\end{tabular}
\caption{The convergence of the error $e$ versus iteration number for the examples with
$\nu=1$, computed using $A$ and $\tilde A$. The rows from top to bottom rows are for $\epsilon=0$,
$\epsilon=$1e-3 and $\epsilon$=5e-2, respectively.\label{fig:err_UA}}
\end{figure}

\section{Concluding remarks}
In this work, we have presented a refined convergence rate analysis of stochastic gradient descent with
a polynomially decaying stepsize schedule for
linear inverse problems, using a finer error decomposition. The analysis indicates that the saturation
phenomenon exhibited by existing analysis actually does not occur, provided that the initial stepsize $c_0$
is sufficiently small. The analysis is also confirmed by several numerical experiments, which show that
with a small $c_0$, the accuracy of SGD is indeed comparable with the order-optimal Landweber method.

The numerical experiments show that Assumption \ref{ass:stepsize}(iii) is actually not needed
for the optimality, so long as the initial stepsize $c_0$ is sufficiently small, although the analysis
requires the condition. One outstanding issue is to
close the gap between the mathematical theory and practical performance.
The study naturally leads to the question whether there is a ``large'' stepsize schedule that can
achieve optimal convergence rates. The numerical experiments indicate that within polynomially decaying
stepsize schedules, a small value of $c_0$ seems necessary for order optimality, but it leaves open nonpolynomial ones,
e.g., stagewise SGD \cite{YuanYanJinYang:2019,GeKakade:2019}. Intuitively, the small initial stepsize can be viewed as
a form of implicit variance reduction, and thus it is also of interest to analyze existing explicit
variance reduction techniques, e.g., SVRG \cite{JohnsonZhang:2013} and SAG \cite{LeRouxSchmidtBach:2012}.
{ The current work discusses only deterministic noise. Naturally it is also of interest to extend
the analysis to the case of random noise; See, e.g., the work \cite{BissantzHohageMunkRuymgaart:2007,HarrachJahn:2020}
for relevant results for statistical inverse problems in a Hilbert space setting.}

\section*{Acknowledgments}
The authors would like to thank the two anonymous referees for their many constructive comments,
which have greatly helped improve the quality of the paper.

\bibliographystyle{abbrv}
\bibliography{sgd}
\end{document}